\newtheorem{theorem}{Theorem}[section]
\newtheorem{lemma}[theorem]{Lemma}
\theoremstyle{definition}
\theoremstyle{remark}
\newtheorem{example}[theorem]{Example}
\numberwithin{equation}{section}
\begin{document}

\title[An adaptive finite element DtN method]{An
adaptive finite element DtN method for Maxwell's equations}

\author{Gang Bao}
\address{School of Mathematical Sciences, Zhejiang University, Hangzhou 310027,
China.}
\email{baog@zju.edu.cn}

\author{Mingming Zhang}
\address{School of Mathematical Sciences, Zhejiang University, Hangzhou 310027,
China.}
\email{mmzaip@zju.edu.cn}

\author{Xue Jiang}
\address{Faculty of Science, Beijing University of Technology, Beijing,
100124, China.}
\email{jxue@lsec.cc.ac.cn}

\author{Peijun Li}
\address{Department of Mathematics, Purdue University, West Lafayette, Indiana
47907, USA.}
\email{lipeijun@math.purdue.edu}

\author{Xiaokai Yuan}
\address{School of Mathematics, Jilin University, Changchun, Jilin 130012, China.}
\email{yuanxk@jlu.edu.cn}

\thanks{The work of GB is supported in part by an NSFC Innovative Group Fund (No.11621101). The work of XJ is supported partially by NSFC grants 11771057 and 11671052. The research of PL is supported in part by the NSF grant DMS-1912704. The work of XY is supported partially by NSFC grants 12171017 and 12171201. }

\subjclass[2010]{78A45, 78M10, 65N30, 65N12, 65N50}

\keywords{Maxwell's equations, electromagnetic scattering problem, adaptive finite element method, DtN operator, transparent boundary condition, a posteriori error estimate}

\begin{abstract}
This paper is concerned with a numerical solution to the scattering of a time-harmonic electromagnetic wave by a bounded and  impenetrable obstacle in three dimensions. The electromagnetic wave propagation is modeled by a boundary value problem of Maxwell's equations in the exterior domain of the obstacle. Based on the Dirichlet-to-Neumann (DtN) operator, which is defined by an infinite series, an exact transparent boundary condition is introduced and the scattering problem is reduced equivalently into a bounded domain. An a posteriori error estimate based adaptive finite element DtN method is developed to solve the discrete variational problem, where the DtN operator is truncated into a sum of finitely many terms. The a posteriori error estimate takes  into account both the finite element approximation error and the truncation error of the DtN operator. The latter is shown to decay exponentially with respect to the truncation
parameter. Numerical experiments are presented to illustrate the effectiveness of the proposed method.    
\end{abstract}

\maketitle

\section{Introduction}\label{section:introduction}

Scattering problems are concerned with the interaction between an inhomogeneous medium and an incident field. They have played a fundamental role in a wide range of scientific areas such as radar and sonar, non-destructive testing, geophysical exploration, and medical imaging \cite{CK98}. Motivated by significant applications, scattering problems have received great attention in both of the engineering and mathematical communities. A considerable amount of mathematical and numerical results are available for 
the scattering problems of acoustic, elastic, and electromagntic waves. We refer to the monographs \cite{Monk-2003, Nedelec-2001, KH-SP15} on comprehensive accounts of the electromagnetic scattering theory for Maxwell's equations.  

This paper is concerned with a numerical solution of the time-harmonic electromagnetic scattering problem by bounded and impenetrable obstacles in three dimensions. In addition to the large scale computation of the three-dimensional problem, there are two other main challenges: the scattering problem is imposed in an unbounded domain and the solution may have singularity due to the nonsmooth surface of the obstacle. To handle the first issue, the unbounded domain needs to be truncated into a bounded one and an appropriate boundary condition is required to avoid artificial wave reflection; the second difficulty can be resolved by using the adaptive finite element method to balance the accuracy and computational cost. 

One of the most popular methods for domain truncation is the perfectly matched layer (PML) technique, which was proposed by B\'{e}renger to solve the time-domain Maxwell equations \cite{b-jcp94}. The basic idea of PML is to surround the domain of interest by a layer of artificial media which can attenuate outgoing waves. Mathematically, it was proved in \cite{CM-SISC-1998} that when the thickness of the layer is infinity, the PML solution in the domain of interest is the same as the solution of the original scattering problem. However, in practice, the layer needs to be truncated to finite thickness which inevitably
introduces the truncation error. The overall error contains three parts when applying the finite element method to the PML problem: the truncation error of the PML layer, the discretization error in the PML layer, and the discretization error in the domain of interest. It was shown in \cite{BW-SJNA-2005} that the PML truncation error decays exponentially with respect to the thickness of the layer and the PML parameters. As is known, the artificial PML layer is constructed through the complex coordinate stretching
\cite{CW-MOTL-1994}, which makes the PML layer to be an inhomogeneous medium. It is difficult to balance the efficiency and accuracy if a uniform mesh refinement is used. If a thin PML layer is used to reduce the computational cost, then the discretization error is large since the medium is inhomogeneous in the layer; On the contrary, if the discretization error is 
controlled to be small, then a thick PML layer is preferred, which increases the cost. The a posteriori error estimate based adaptive finite element method is effective to handle this issue. The a posteriori error estimates are computable quantities from numerical solutions. They can be used for mesh modification such as refinement or coarsening \cite{V-1996}. The method can control the error and asymptotically optimize the approximation. Moreover, it can effectively deal with the issue that the solution has local singularities in the domain of interest. It is worth mentioning that even though the solution is smooth, the adaptive finite element method is still desirable due to the inhomogeneous medium in the PML layer. We refer to \cite{CC-mc08, CWZ-2007-SIAM, CW-SINUM-2003, jllz-m2na17, JLLZ-cms18} for the discussion of adaptive finite element PML methods for scattering problems in different structures.

Another effective approach is to impose transparent boundary conditions to solve the scattering problems formulated in open domains. A key step of the method is to construct the Dirichlet-to-Neumann (DtN) operator, which can be done via different manners such as the boundary integral equation \cite{MK-jcp04}, the Fourier transform or Fourier series expansions \cite{LWZ-SIMA-2011, pwz-MMAS-2012,JLLZ-JSC-2017}. In this paper, observing that the solution is analytical when it is away from the obstacle, we consider the Fourier series expansion of the solution on any sphere that encloses the obstacle. The DtN operator can be obtained by studying the resulting systems of ordinary differential equations for the Fourier coefficients. Compared to the PML technique, the DtN method does not introduce an auxiliary layer of inhomogeneous medium, which can reduce the cost. However, the DtN operator is nonlocal and is defined as an infinite series. In actual computation, the infinite series needs to be truncated into a sum of finitely many terms, which also introduces a truncation error. It was shown in \cite{B-SJNA-1995,HNPX-JCAM-2011 } that if the solution is smooth enough, the DtN operator truncation error decays exponentially with respect to the truncation number. When the solution has singularities, the convergence analysis is sophisticated. The DtN operator truncation error needs to be integrated into the a posteriori error estimate and the truncation number can be determined automatically through the estimate. The adaptive finite element DtN method has  been successfully applied to solve many scattering problems, including acoustic waves \cite{BZHL-2020-DCDSS, JLLZ-JSC-2017, JLZ-CCP-2013, LZZ-CSAM20}, electromagnetic waves \cite{JLLWWZ,YBL-CSIAM-2020}, and elastic waves 
\cite{LY-2020-CMAME,BLY-SINUM21,LY-2019}.

This work is a non-trivial extension of the adaptive finite element DtN method for the acoustic and elastic wave scattering problems by bounded obstacles. Compared to the acoustic and elastic scattering problems, the electromagentic scattering problem is more involved. Computationally, it is also more intense to solve the three-dimensional Maxwell equations. In this paper, we deduce an a posteriori error estimate which takes into account both the finite element discretization error and the DtN operator truncation error. The latter is shown to decay exponentially with respect to the truncation number. One of the key steps in the analysis is to consider a new dual problem and to deduce its analytical solution. Based on the a posteriori error estimate, we develop an adaptive finite element DtN method. Numerical experiments are presented to demonstrate the competitive behavior of the proposed method. This work provides a viable alternative to the adaptive finite element PML method for solving the electromagnetic scattering problem. In addition, the adaptive finite element DtN method may be applied to solve many other electromagnetic scattering problems imposed in unbounded domains.

The paper is organized as follows. In Section \ref{section:ProblemF}, we introduce the model problem and some function spaces used in the analysis. The DtN operator and the variational problems are discussed in Section \ref{section:VariationalF}. Section 
\ref{section:PosterioriE} presents the finite element discretization with the truncated DtN operator and states the a posteriori error estimate. Section \ref{section:Proof} is devoted to the proof of the error estimate and is the main part of the work. Numerical experiments are presented in Section \ref{sec:num} to demonstrate the efficiency of the proposed method. The paper is concluded with some general remarks in Section \ref{section: conclusion}.

\section{Problem formulation}\label{section:ProblemF}

Denote by $D$ the domain of the obstacle with Lipschitz boundary $\partial D$. The obstacle is assumed to be contained in the ball $B_{R}=\left\{\boldsymbol{x}\in\mathbb{R}^3: |\boldsymbol{x}|<R\right\}$ with boundary $\Gamma_R=\{\boldsymbol x\in\mathbb R^3: |\boldsymbol x|=R\}$. Let $B_{R'}$ be the smallest ball centered at the origin with radius $R'$ that also contains $\overline{D}$, i.e., $D\subset\subset B_{R'}\subset\subset B_R$ with $0<R'<R$. Denote by $\Omega:=B_R\setminus\overline{D}$ the bounded domain enclosed by $\Gamma_R$ and $\partial D$. The exterior domain $\mathbb R^3\setminus\overline D$ is assumed to be filled with a homogeneous medium characterized by the dielectric permittivity $\epsilon$ and the magnetic permeability $\mu$. Without loss of generality, we may assume that the dielectric permittivity $\epsilon=1$ and the magnetic permeability $\mu=1$. Furthermore, we assume that the obstacle is a perfect electric conductor.

Let the obstacle be illuminated by a time-harmonic electromagnetic field $(\boldsymbol E^{\rm inc}, \boldsymbol H^{\rm inc})$, which can be either a plane wave or a point source. The total electromagnetic field $(\boldsymbol E, \boldsymbol H)$ is governed by Maxwell's equations
\begin{equation}\label{MaxwellEqns}
\begin{cases}
\nabla\times\boldsymbol{E}-{\rm i}\kappa \boldsymbol{H}=0,\quad \nabla\times\boldsymbol{H}+{\rm i}\kappa \boldsymbol{E}=0 & {\rm in} ~ \mathbb{R}^3\setminus\overline{D},\\
\boldsymbol{\nu}\times\boldsymbol{E}=0 &{\rm on} ~ \partial D,\\
|\boldsymbol{x}|\left(\boldsymbol{E}^s-\boldsymbol{H}^s\times\hat{\boldsymbol{x}}\right)\rightarrow 0
&{\rm as} ~ |\boldsymbol{x}|\rightarrow +\infty,
\end{cases}
\end{equation} 
where $\boldsymbol{\nu}$ is the unit normal vector to $\partial D$ pointing to the exterior of $D$, and 
$\boldsymbol{E}^s=\boldsymbol{E}-\boldsymbol{E}^{\rm inc}$ and $\boldsymbol{H}^s=\boldsymbol{H}-\boldsymbol{H}^{\rm inc}$
are the scattered electric and magnetic fields, respectively. Eliminating the magnetic field $\boldsymbol{H}$ from \eqref{MaxwellEqns}, we obtain the Maxwell system for the electric field $\boldsymbol E$: 
\begin{equation}\label{ElectroFields}
\begin{cases}
\nabla\times(\nabla\times\boldsymbol{E})-\kappa^2\boldsymbol{E}=0
\quad & {\rm in} ~ \mathbb{R}^3\setminus\overline{D},\\
\boldsymbol{\nu}\times\boldsymbol{E}=0 \quad &{\rm on} ~ \partial D,\\
|\boldsymbol{x}|\left[\left(\nabla\times\boldsymbol{E}^s\right)\times\hat{\boldsymbol{x}}
-{\rm i}\kappa\boldsymbol{E}^s\right]\rightarrow 0
\quad &{\rm as} ~ |\boldsymbol{x}|\rightarrow +\infty.
\end{cases}
\end{equation} 

Next we introduce some function spaces. Denote by $L^2(\Omega)$ and $\boldsymbol{L}^2(\Omega)=L^2(\Omega)^3$ the standard Hilbert space of complex square integrable functions in $\Omega$ and the corresponding Cartesian product space, respectively. Let
\[
\boldsymbol{H}({\rm curl},\, \Omega):=\left\{\boldsymbol{\phi}\in\boldsymbol{L}^2(\Omega): \nabla\times\boldsymbol{\phi}\in\boldsymbol{L}^2(\Omega)\right\},
\]
which has the norm
\begin{equation}\label{curlnorm}
\|\boldsymbol{\phi}\|_{\boldsymbol{H}({\rm curl},\, \Omega)}=\left(\|\boldsymbol{\phi}\|^2_{\boldsymbol{L}^2(\Omega)}
+\|\nabla\times\boldsymbol{\phi}\|_{\boldsymbol{L}^2(\Omega)}^2\right)^{1/2}.
\end{equation}

To describe the Calder\'{o}n operator and the TBC, it is necessary to introduce some trace function spaces defined on $\Gamma_R$. Let $H^s(\Gamma_R), s\in\mathbb R$ be the standard trace Sobolev space and $\boldsymbol{H}^s(\Gamma_R)=H^s(\Gamma_R)^3$ be the corresponding Cartesian product space. Define the tangential function spaces
\[
TL(\Gamma_R):=\left\{\boldsymbol{\phi}\in\boldsymbol{L}^2(\Gamma_R): \boldsymbol{\phi}\cdot \boldsymbol{e}_{\rho}=0\right\},
\quad TH^{s}(\Gamma_R)=\left\{\boldsymbol{\phi}\in \boldsymbol{H}^s(\Gamma_R): \boldsymbol{\phi}\cdot\boldsymbol{e}_{\rho}=0\right\},
\]
where $\boldsymbol e_\rho$ is the unit normal vector to $\Gamma_R$.  It is shown in \cite[Theorem 6.23]{CK98} that for any $\boldsymbol{\phi}\in TL(\Gamma_R)$, it has the Fourier series expansion
\begin{eqnarray*}
\boldsymbol{\phi}=\sum_{n\in\mathbb{N}}\sum_{|m|\leq n}\phi_{1n}^m\boldsymbol{U}_n^m + \phi_{2n}^m\boldsymbol{V}_n^m,
\end{eqnarray*}
where $\{(\boldsymbol{U}_n^m, \boldsymbol{V}_n^m): |m|\leq n, n=0,1,\dots\} $ is an orthonormal basis for $TL(\Gamma_R)$ (cf. \eqref{Unm}--\eqref{Vnm}). The norm for functions in $TL(\Gamma_R)$ and $TH^s(\Gamma_R)$ can be characterized by 
\[
\|\boldsymbol{\phi}\|_{TL(\Gamma_R)}=\left(\sum_{n\in\mathbb{N}}\sum_{|m|\leq
n}|\phi_{1n}^m|^2+|\phi_{2n}^m|^2\right)^{1/2}
\]
and
\begin{equation*}\label{ths}
\|\boldsymbol{\phi}\|_{TH^s(\Gamma_R)}=\left[\sum_{n\in\mathbb{N}}\sum_{|m|\leq
n}\left(1+n(n+1)\right)^s\left(|\phi_{1n}^m|^2+|\phi_{2n}^m|^2\right)\right]^{1/2}. 
\end{equation*}

Denote by ${\rm curl}_{\Gamma_R}$ and ${\rm div}_{\Gamma_R}$ the surface curl and the surface divergence on $\Gamma_R$ (cf. Appendix \ref{Appendix:SurOpe}), respectively. Let
\begin{align*}
TH^{-1/2}({\rm curl}, \Gamma_R)&=\left\{\boldsymbol{\phi}\in TH^{-1/2}(\Gamma_R): {\rm 
curl}_{\Gamma_R}\boldsymbol{\phi}\in H^{-1/2}(\Gamma_R) \right\},\\
TH^{-1/2}({\rm div}, \Gamma_R)&=\left\{\boldsymbol{\phi} \in TH^{-1/2}(\Gamma_R): {\rm
div}_{\Gamma_R}\boldsymbol{\phi}\in H^{-1/2}(\Gamma_R) \right\}, 
\end{align*}
which are equipped with the norms 
\begin{equation}\label{tcurl}
\|\boldsymbol{\phi}\|_{TH^{-1/2}({\rm curl},\Gamma_R)} =\left[\sum_{n\in\mathbb{N}}\sum_{|m|\leq n} \left(\frac{1}{\sqrt{1+n(n+1)}}|\phi_{1n}^m|^2+\sqrt{1+n(n+1)}|\phi_{2n}^m|^2\right)\right]^{1/2}
\end{equation}
and
\begin{equation*}\label{tdiv}
\|\boldsymbol{\phi}\|_{TH^{-1/2}({\rm div},\Gamma_R)} =\left[\sum_{n\in\mathbb{N}}\sum_{|m|\leq n}\left( \sqrt{1+n(n+1)}|\phi_{1n}^m|^2+\frac{1}{\sqrt{1+n(n+1)}}|\phi_{2n}^m|^2\right)\right]^{1/2}.
\end{equation*}

Hereafter, the notation $a\lesssim b$ stands for $a\leq Cb$, where $C$ is a positive constant whose value is not required and may change step by step in the proofs.

\section{Variational problems}\label{section:VariationalF}

In this section, we introduce a TBC to reduce the boundary value problem \eqref{ElectroFields} into the bounded domain $\Omega$ and discuss its variational formulation.

Given a tangential vector $\boldsymbol{\phi}$ on $\Gamma_R$, it has the Fourier series expansion
\begin{equation}\label{TangentialV}
	\boldsymbol{\phi}=\sum_{n\in\mathbb{N}}\sum_{|m|\leq n}\phi_{1n}^m\boldsymbol{U}_n^m
	+ \phi_{2n}^m\boldsymbol{V}_n^m. 
\end{equation}
The Calder\'{o}n operator  $\mathscr{T}: \boldsymbol{H}^{-1/2}({\rm curl}, \Gamma_R)\rightarrow\boldsymbol{H}^{-1/2}(\rm div, \Gamma_R)$ is defined by 
\begin{equation}\label{CapacityOpe}
\mathscr{T}\boldsymbol{\phi}=\sum_{n\in\mathbb{N}}\sum_{|m|\leq n}
\frac{{\rm i}\kappa R }{1+z_n^{(1)}(\kappa R)}  \phi_{1n}^m\boldsymbol{U}_n^m
+ \frac{1+z_n^{(1)}(\kappa R) }{{\rm i}\kappa R}\phi_{2n}^m\boldsymbol{V}_n^m ,
\end{equation}
where
$ z_n^{(1)}(z)=t h_n^{(1)'}(z)/h_n^{(1)}(z) $ and $h_n^{(1)}(z)$ is the spherical Hankel function of the first kind
with order $n$. It is shown in \cite{BW-SJNA-2005} that the solution $\boldsymbol{E}$ of \eqref{ElectroFields}
satisfies the following TBC on $\Gamma_R$: 
\begin{equation}\label{TBC}
\left(\nabla\times\boldsymbol{E}\right)\times\boldsymbol{\nu}-{\rm i}\kappa \mathscr{T}\boldsymbol{E}_{\Gamma_R}=\boldsymbol{f},
\end{equation}
where $\boldsymbol{E}_{\Gamma_R}=\boldsymbol{e}_{\rho}\times(\boldsymbol{E}\times\boldsymbol{e}_{\rho})$ is the tangential component of $\boldsymbol E$ and 
\[
\boldsymbol{f}=(\nabla\times\boldsymbol{E}^{\rm inc})\times\boldsymbol{\nu}-{\rm i}\kappa \mathscr{T}\boldsymbol{E}^{\rm inc}_{\Gamma_R}. 
\]

Based on \eqref{TBC}, the boundary value problem \eqref{ElectroFields} can be equivalently reduced into the bounded domain $\Omega$. The corresponding variational problem is to find 
$\boldsymbol{E}\in \boldsymbol{H}_{\partial D}({\rm curl},\Omega)$  such that
\begin{equation}\label{vform}
a(\boldsymbol{E},\boldsymbol{\psi})=\int_{\Gamma_R} \boldsymbol{f}\cdot\overline{\boldsymbol{\psi}}_{\Gamma_R}{\rm d}s
\quad \forall \boldsymbol{\psi}\in\boldsymbol{H}_{\partial D}({\rm curl},\Omega),
\end{equation}
where the sesquilinear form 
$a: \boldsymbol{H}(\rm curl, \Omega)\times\boldsymbol{H}(\rm curl, \Omega)\rightarrow\mathbb{C}$ is defined by
\begin{equation}\label{auv}
a(\boldsymbol{\varphi},\boldsymbol{\psi})=\int_{\Omega}(\nabla\times\boldsymbol{\varphi})\cdot(\nabla\times
\overline{\boldsymbol{\psi}}){\rm d}\boldsymbol{x} -
\kappa^2\int_{\Omega}\boldsymbol{\varphi}\cdot\overline{\boldsymbol{\psi}}{\rm d}\boldsymbol{x}
-{\rm i}\kappa\int_{\Gamma_R}\mathscr{T}\boldsymbol{\varphi}_{\Gamma_R}\cdot
\overline{\boldsymbol{\psi}}_{\Gamma_R}{\rm d}s
\end{equation}
and
$
\boldsymbol{H}_{\partial D}({\rm curl}, \Omega)=\left\{\boldsymbol{\phi}\in\boldsymbol{H}({\rm curl}, \Omega):
\boldsymbol{\nu}\times\boldsymbol{\phi}=0\,{\rm on}\,\partial D\right\}.
$

The well-posedness of the variational problem \eqref{vform} is discussed in \cite{Monk-2003, Nedelec-2001}. Here we simply assume that the variational problem \eqref{vform} has a unique weak solution $\boldsymbol E\in \boldsymbol H_{\partial D}({\rm curl},\Omega)$. By the general theory in Babuska and Aziz \cite{BA-AP-1973}, there exists a constant $\gamma>0$ depending on $\kappa$ and $R$ such that the following inf-sup condition holds: 
\begin{equation}\label{inf-sup}
\sup_{0\neq \boldsymbol{\psi}\in \boldsymbol{H}_{\partial D}({\rm curl},\Omega)}
	\frac{|a(\boldsymbol{\varphi},\boldsymbol{\psi})|}{\|\boldsymbol{\psi}\|_{\boldsymbol{H}({\rm curl},\Omega)}}\geq
	\gamma \|\boldsymbol{\varphi}\|_{\boldsymbol{H}({\rm curl},\Omega)}\quad\forall\boldsymbol{\varphi}\in 
	\boldsymbol{H}_{\partial D}({\rm curl},\Omega).
\end{equation}

In practice, the Calder\'{o}n operator needs to be truncated into a sum of finitely many terms. Define the truncated Calder\'{o}n operator 
\begin{equation}\label{TrunCapacity}
\mathscr{T}^N\boldsymbol{\phi}=\sum_{n\leq N}\sum_{|m|\leq n} \frac{{\rm i}\kappa R }{1+z_n^{(1)}(\kappa R)}  \phi_{1n}^m\boldsymbol{U}_n^m+ \frac{1+z_n^{(1)}(\kappa R) }{{\rm i}\kappa R}\phi_{2n}^m\boldsymbol{V}_n^m ,
\end{equation}
where $N>0$ is a sufficiently large integer and $\boldsymbol{\phi}$ is a tangent vector on $\Gamma_R$ with the Fourier series expansion \eqref{TangentialV}.

Replacing $\mathscr{T}$ in \eqref{vform} by $\mathscr T^N$, we obtain the truncated variational problem which is to find  $\boldsymbol{E}^N\in \boldsymbol{H}_{\partial D}({\rm curl},\Omega)$ 
such that
\begin{equation}\label{vNform}
a^N(\boldsymbol{E}^N,\boldsymbol{\psi})=\int_{\Gamma_R} \boldsymbol{f}^N\cdot\overline{\boldsymbol{\psi}}_{\Gamma_R}{\rm d}s
\quad \forall \boldsymbol{\psi}\in\boldsymbol{H}_{\partial D}({\rm curl},\Omega),
\end{equation}
where the sesquilinear form 
$a^N: \boldsymbol{H}(\rm curl, \Omega)\times\boldsymbol{H}(\rm curl, \Omega)\rightarrow\mathbb{C}$ is defined by
\begin{equation}\label{aNuv}
a^N(\boldsymbol{\varphi},\boldsymbol{\psi})=\int_{\Omega}(\nabla\times\boldsymbol{\varphi})\cdot(\nabla\times
\overline{\boldsymbol{\psi}}){\rm d}\boldsymbol{x} -
\kappa^2\int_{\Omega}\boldsymbol{\varphi}\cdot\overline{\boldsymbol{\psi}}{\rm d}\boldsymbol{x}
-{\rm i}\kappa\int_{\Gamma_R}\mathscr{T}^N\boldsymbol{\varphi}_{\Gamma_R}\cdot
\overline{\boldsymbol{\psi}}_{\Gamma_R}{\rm d}s
\end{equation}
and
\begin{equation*}\label{fn}
\boldsymbol{f}^N:=(\nabla\times\boldsymbol{E}^{\rm inc})\times\boldsymbol{\nu}-{\rm i}\kappa \mathscr{T}^N\boldsymbol{E}^{\rm inc}_{\Gamma_R}.
\end{equation*}

Following the argument in \cite{HNPX-JCAM-2011}, we may show that the truncated variational problem \eqref{vNform} admits a unique weak solution $\boldsymbol{E}^N\in\boldsymbol{H}_{\partial D}({\rm curl},\Omega)$ for the sufficiently large truncation number $N$. The details are omitted since the focus of this work is on the a posteriori error estimate of the solution to the truncated variational problem.

\section{The a posteriori error estimate}\label{section:PosterioriE}

In this section, we introduce the finite element approximation of variational problem \eqref{vNform}, present the a posteriori error estimate, and state the main result of this paper.

Let $\mathcal{M}_h$ be a regular tetrahedral mesh of the domain $\Omega$, where $h$ denotes the maximum diameter of all the elements in $\mathcal M_h$. For any tetrahedral element which has more than one vertex on  $\Gamma_R$, we may adopt the mapping $T$ proposed in \cite{Bernardi-1989-SINUM} which maps the corresponding edge or face of the tetrahedral element exactly on $\Gamma_R$.

Introduce the lowest order N\'ed\'elec edge element space with isoparametric mapping 
\[
\boldsymbol{V}_h:=\left\{\boldsymbol{v}(T^{-1}(\boldsymbol{x})): \boldsymbol{v}\in \boldsymbol{H}_{\partial D}({\rm curl}, \Omega), \boldsymbol v|_{K}=\boldsymbol{a}_K+\boldsymbol{b}_K\times\boldsymbol{x} ~ \forall K\in\mathcal{M}_h\right\}.
\]
The finite element approximation to the variational problem \eqref{vNform} is to find $\boldsymbol{E}^N_h\in\boldsymbol{V}_h$ such that
\begin{equation}\label{vNhform}
a_h^N(\boldsymbol{E}_h^N,\boldsymbol{\psi}_h)=\int_{\Gamma_R} \boldsymbol{f}^N\cdot
	\overline{\boldsymbol{\psi}}_h\quad \forall \boldsymbol{\psi}_h\in\boldsymbol{V}_h,
\end{equation}
where the sesquilinear form 
$a_h^N: \boldsymbol{V}_h\times\boldsymbol{V}_h\rightarrow\mathbb{C}$ is defined by
\begin{equation}\label{aNh}
a_h^N(\boldsymbol{\varphi},\boldsymbol{\psi})=\int_{\Omega}(\nabla\times\boldsymbol{\varphi})\cdot(\nabla\times
\overline{\boldsymbol{\psi}}){\rm d}\boldsymbol{x} -
\kappa^2\int_{\Omega}\boldsymbol{\varphi}\cdot\overline{\boldsymbol{\psi}}{\rm d}\boldsymbol{x}
-{\rm i}\kappa\int_{\Gamma_R}\mathscr{T}^N\boldsymbol{\varphi}_{\Gamma_R}\cdot
\overline{\boldsymbol{\psi}}_{\Gamma_R}{\rm d}s.
\end{equation}

For sufficiently small $h$, the discrete inf-sup condition of the sesquilinear form \eqref{aNuv} may be
established by following the approach in \cite{Schatz-MC-1974}. Based on the general theory in \cite{BA-AP-1973},
the discrete variational problem \eqref{vNhform} has a unique solution  $\boldsymbol E^N_h\in\boldsymbol{V}_h$. Again, the details
of the proof are omitted since our focus is the a posteriori error estimate.

Next, we introduce the a posteriori error estimate in the tetrahedral elements and across their faces.
For any tetrahedral element $K\in\mathcal{M}_h$, we define the local residuals by
\begin{eqnarray*}
R_K^{(1)}&:=&\kappa^2\boldsymbol{E}_h^N|_K-
\nabla\times(\nabla\times\boldsymbol{E}_h^N)|_K,\\
R_K^{(2)}&:=&-\kappa^2\nabla\cdot\boldsymbol{E}_h^N|_K.
\end{eqnarray*}
Denote by $\mathcal{F}_h$ the set of all faces of tetrahedrons in $\mathcal{M}_h$.
Given an interior face $F\in\mathcal{F}_h$, which is the common face of elements
$K_1$ and $K_2$, we define the jump residuals across $F$ as
\begin{eqnarray*}
J_F^{(1)}&:=&\left(\nabla\times\boldsymbol{E}_h^N|_{K_1}-
\nabla\times\boldsymbol{E}_h^N|_{K_2}\right)\times\boldsymbol{\nu},\\
J_F^{(2)}&:=&\kappa^2\left(\boldsymbol{E}_h^N|_{K_1}-
\boldsymbol{E}_h^N|_{K_2}\right)\cdot\boldsymbol{\nu},
\end{eqnarray*}
where the unit normal vector $\boldsymbol{\nu}$ on $F$ points from $K_2$
to $K_1$. Given a face $F\in\mathcal{F}_h\cap\Gamma_R$, 
define the residuals by
\begin{eqnarray*}
J_F^{(1)}&:=&2\left[-\left(\nabla\times\boldsymbol{E}_h^N\right)\times\boldsymbol{\nu}_1
+{\rm i}\kappa \mathscr{T}^N\left(\boldsymbol{E_h^N}\right)_{\Gamma_R}+\boldsymbol{f}^N\right],\\
J_F^{(2)}&:=&2\left[\kappa^2\boldsymbol{E}_h^N\cdot\boldsymbol{\nu}_1
-{\rm i}\kappa {\rm div}_{\Gamma_R}(\mathscr{T}^N(\boldsymbol{E_h^N})_{\Gamma_R})
-{\rm div}_{\Gamma_R} \boldsymbol{f}^N\right].
\end{eqnarray*}
For any $K\in\mathcal{M}_h$, denote by $\eta_K$ the local error
estimator, which is defined by
\begin{equation*}\label{eta_T}
\eta_K^2=h_K^2\left(\|R_K^{(1)}\|^2_{\boldsymbol{L}^2(K)}+\|R_K^{(2)}\|^2_{\boldsymbol{L}^2(K)}\right)
+h_K\sum_{F\in\partial K}\left(
\|J_F^{(1)}\|^2_{\boldsymbol{L}^2(F)}+\|J_F^{(2)}\|^2_{\boldsymbol{L}^2(F)}\right) .  
\end{equation*}

The main result of the paper is stated as follows.

\begin{theorem}\label{thm}
Let $\boldsymbol{E}$ and $\boldsymbol{E}_h^N$ be the solutions of \eqref{vform} and \eqref{vNhform}, respectively.
Then for sufficiently large integer $N$, the following a posteriori error estimate holds:
	\[
	\|\boldsymbol{E}-\boldsymbol{E}_h^N\|_{\boldsymbol{H}({\rm curl}, \Omega)}\lesssim
	\Bigg(\sum_{K\in\mathcal{M}_h}\eta_K^2\Bigg)^{1/2}+
	\left(\frac{R'}{R}\right)^{N+1}\|\boldsymbol{f}\|_{TH^{-1/2}({\rm div}, \Gamma_R)}.
	\]
\end{theorem}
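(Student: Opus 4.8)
The plan is to split the total error into a discretization part and a DtN-truncation part by inserting the solution $\boldsymbol E^N$ of the truncated (but undiscretized) variational problem \eqref{vNform}:
\[
\|\boldsymbol E-\boldsymbol E_h^N\|_{\boldsymbol H({\rm curl},\Omega)}
\le \|\boldsymbol E-\boldsymbol E^N\|_{\boldsymbol H({\rm curl},\Omega)}
+\|\boldsymbol E^N-\boldsymbol E_h^N\|_{\boldsymbol H({\rm curl},\Omega)}.
\]
For the first term I would estimate the effect of replacing $\mathscr T$ by $\mathscr T^N$ and $\boldsymbol f$ by $\boldsymbol f^N$. Subtracting \eqref{vNform} from \eqref{vform}, the difference $\boldsymbol E-\boldsymbol E^N$ satisfies a variational equation whose right-hand side involves $(\mathscr T-\mathscr T^N)\boldsymbol E_{\Gamma_R}$ and $\boldsymbol f-\boldsymbol f^N=-{\rm i}\kappa(\mathscr T-\mathscr T^N)\boldsymbol E^{\rm inc}_{\Gamma_R}$; these combine to $-{\rm i}\kappa(\mathscr T-\mathscr T^N)(\boldsymbol E^{\rm inc}-\boldsymbol E)_{\Gamma_R}$, i.e.\ the tail of the Fourier series of the scattered field $\boldsymbol E^s=\boldsymbol E-\boldsymbol E^{\rm inc}$ on $\Gamma_R$. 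Using the inf-sup condition \eqref{inf-sup}, the $\boldsymbol H({\rm curl})$-error is bounded by the operator norm of $\mathscr T-\mathscr T^N$ acting on the high-order tail. Here I would exploit that $\boldsymbol E^s$ is analytic outside $\overline D$ and satisfies the radiating Maxwell equation in $B_R\setminus\overline{B_{R'}}$: expanding $\boldsymbol E^s$ in spherical vector wave functions with radial part $h_n^{(1)}(\kappa r)$, the Fourier coefficients on $\Gamma_R$ are controlled by those on $\Gamma_{R'}$ via ratios of spherical Hankel functions, which decay like $(R'/R)^{n+1}$ by the standard asymptotics/monotonicity of $h_n^{(1)}$. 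Summing the tail $n>N$ and measuring $\boldsymbol f$ in $TH^{-1/2}({\rm div},\Gamma_R)$ then yields the claimed bound $(R'/R)^{N+1}\|\boldsymbol f\|_{TH^{-1/2}({\rm div},\Gamma_R)}$.

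For the second term, the discretization error $\boldsymbol E^N-\boldsymbol E_h^N$, I would carry out a standard residual-based a posteriori analysis for the sesquilinear form $a^N$. Using the discrete inf-sup stability of $a^N$ and Galerkin orthogonality $a^N(\boldsymbol E^N-\boldsymbol E_h^N,\boldsymbol\psi_h)=0$ for $\boldsymbol\psi_h\in\boldsymbol V_h$, one writes $a^N(\boldsymbol E^N-\boldsymbol E_h^N,\boldsymbol\psi)=a^N(\boldsymbol E^N-\boldsymbol E_h^N,\boldsymbol\psi-\boldsymbol\psi_h)$ and then integrates by parts element by element. This produces the interior residuals $R_K^{(1)},R_K^{(2)}$ (the second one from testing the divergence, exploiting that $\nabla\cdot\boldsymbol E=0$ for the exact solution), the interface jumps $J_F^{(1)},J_F^{(2)}$, and the boundary terms on $\Gamma_R$ that account for the truncated TBC, giving the boundary residuals $J_F^{(1)},J_F^{(2)}$ on faces $F\subset\Gamma_R$. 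Choosing $\boldsymbol\psi_h$ to be a Scott–Zhang / Clément-type quasi-interpolant of $\boldsymbol\psi$ (respecting the isoparametric mapping $T$ near $\Gamma_R$), the standard local approximation estimates with weights $h_K$ and $h_K^{1/2}$ on faces bound each term by $\eta_K\|\boldsymbol\psi\|_{\boldsymbol H({\rm curl},\omega_K)}$, and summing over $K$ gives $\|\boldsymbol E^N-\boldsymbol E_h^N\|_{\boldsymbol H({\rm curl},\Omega)}\lesssim(\sum_K\eta_K^2)^{1/2}$.

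The main obstacle is the first term, specifically obtaining the sharp exponential decay factor $(R'/R)^{N+1}$ together with the right trace norm on $\Gamma_R$. This is exactly the point the introduction flags as requiring "a new dual problem and its analytical solution": rather than estimating $\boldsymbol E-\boldsymbol E^N$ directly, I expect one introduces a dual (adjoint) problem with the truncation-tail data as source, solves it explicitly by separation of variables in the spherical shell $B_R\setminus\overline{B_{R'}}$ (its solution being a finite combination of vector spherical harmonics times $h_n^{(1)}(\kappa r)$ for $n>N$), and uses this explicit solution to convert the abstract $\boldsymbol H({\rm curl})$-bound into the concrete geometric-series estimate. Controlling the ratios $h_n^{(1)}(\kappa R')/h_n^{(1)}(\kappa R)$ and the derivative quotients $z_n^{(1)}$ uniformly in $n$, and checking that the error they generate is genuinely measured against $\|\boldsymbol f\|_{TH^{-1/2}({\rm div},\Gamma_R)}$ (not a stronger norm), is where the real work lies; the discretization part is routine by comparison.
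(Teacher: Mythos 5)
Your overall strategy---splitting $\boldsymbol E-\boldsymbol E_h^N$ into the truncation error $\boldsymbol E-\boldsymbol E^N$ and the discretization error $\boldsymbol E^N-\boldsymbol E_h^N$---is a legitimate alternative, but as written it rests on a stability property the paper neither proves nor assumes, and it is precisely to avoid this that the paper takes a different route. Both halves of your splitting (note that $a^N(\boldsymbol E-\boldsymbol E^N,\boldsymbol\psi)={\rm i}\kappa\int_{\Gamma_R}(\mathscr T-\mathscr T^N)\boldsymbol E^s_{\Gamma_R}\cdot\overline{\boldsymbol\psi}_{\Gamma_R}\,{\rm d}s$, so your tail identity forces you to work with $a^N$, not $a$) require a \emph{continuous} inf-sup condition for $a^N$ with constant uniform in $N$; the ``discrete inf-sup stability'' you invoke only yields existence of $\boldsymbol E_h^N$ and cannot convert a residual bound into an error bound. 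The paper only states the inf-sup condition \eqref{inf-sup} for $a$, works directly with $\boldsymbol\xi=\boldsymbol E-\boldsymbol E_h^N$, and replaces coercivity by the G\aa rding-type identity \eqref{Key}, whose price is the term $(\kappa^2+1)\|\boldsymbol\xi\|^2_{\boldsymbol L^2(\Omega)}$. This is where the dual problem actually enters: it is an Aubin--Nitsche argument for $\|\boldsymbol\xi\|_{\boldsymbol L^2(\Omega)}$ (Lemmas \ref{L22}--\ref{Lemma59}), not a device for the $(R'/R)^{N+1}$ factor. That factor comes from the elementary Lemma \ref{ExponentialDecay} (ratios of spherical Hankel functions), as in the first half of your sketch; the hard part of the duality argument is showing $\kappa\,|\int_{\Gamma_R}(\mathscr T-\mathscr T^N)\boldsymbol\xi_{\Gamma_R}\cdot\overline{\boldsymbol W}_{\Gamma_R}|\lesssim N^{-1}\|\boldsymbol\xi\|^2_{\boldsymbol H({\rm curl},\Omega)}$ so that this term can be absorbed for large $N$, which requires solving the ODE systems for the Fourier coefficients of $\boldsymbol W$ explicitly. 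You have misassigned this entire block of work.

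The second gap is in the discretization part: you cannot take $\boldsymbol\psi_h$ to be a Cl\'ement-type interpolant of $\boldsymbol\psi\in\boldsymbol H_{\partial D}({\rm curl},\Omega)$ directly, because the local estimates with weights $h_K$ and $h_F^{1/2}$ require $\boldsymbol H^1$ regularity and $\boldsymbol H({\rm curl},\Omega)\not\subset\boldsymbol H^1(\Omega)$. The paper first applies the Birman--Solomyak decomposition $\boldsymbol\psi=\boldsymbol\Phi+\nabla\phi$ with $\boldsymbol\Phi\in\boldsymbol H^1(\Omega)$, $\phi\in H^1_0(\Omega)$ (Lemma \ref{Decomposition}), interpolates the two pieces by $\mathscr P_h$ and $\Pi_h$ separately, and integrates by parts against $\boldsymbol\Phi-\boldsymbol\Phi_h$ and $\nabla(\phi-\phi_h)$; only the gradient piece produces the divergence residuals $R_K^{(2)}$ and $J_F^{(2)}$. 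Without this splitting your element-wise integration by parts neither closes nor generates the estimator $\eta_K$ as defined. These two ingredients---the treatment of the non-coercivity (via the dual problem and Lemma \ref{imtbc} in the paper, or via an unproven $N$-uniform inf-sup for $a^N$ in your plan) and the Helmholtz-type splitting of the test function---are the substance of the proof and are missing from the proposal.
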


It is clear to note from the theorem that there are two parts for the a posteriori error estimate. One comes from the finite element discretization error and another takes into account the truncation error of the DtN operator. Moreover, the latter decreases exponentially with respect to $N$ since $R'<R$. 

\section{Proof of the main theorem}\label{section:Proof}

This section is devoted to the proof of Theorem \ref{thm}. Denote the error by $\boldsymbol{\xi}=\boldsymbol{E}-\boldsymbol{E}_h^N$. A simple calculation from \eqref{auv} shows that
\begin{eqnarray}\label{Key}
	\|\boldsymbol{\xi}\|^2_{\boldsymbol{H}(\rm curl, \Omega)}&=&\Re\left\{a(\boldsymbol{\xi}, \boldsymbol{\xi})
	+{\rm i}\kappa\int_{\Gamma_R} \left(\mathscr{T}-\mathscr{T}^N\right)\boldsymbol{\xi}_{\Gamma_R}
	\cdot\overline{\boldsymbol{\xi}}_{\Gamma_R}{\rm d}s\right\} \notag\\
&&	-\kappa\Im\left\{\int_{\Gamma_R} \mathscr{T}^N\boldsymbol{\xi}_{\Gamma_R}\cdot\overline{\boldsymbol{\xi}}_{\Gamma_R}
	{\rm d}s\right\}+\left(\kappa^2+1\right)\int_{\Omega}\boldsymbol{\xi}\cdot\overline{\boldsymbol{\xi}}{\rm d}\boldsymbol{x}.
\end{eqnarray}
It suffices to estimate the four terms on the right-hand side of \eqref{Key}. In the following, Lemmas \ref{ExponentialDecay} and \ref{Key2} concern the estimates of the first two terms; Lemma \ref{imtbc} is devoted to the estimate of the third term; Lemmas \ref{w2n}--\ref{Lemma59} address the error estimate of the last term. 

Let us begin with some trace regularity results. Similar results can be found in \cite[Lemmas 3.3 and 3.4]{pwz-MMAS-2012} for the overfilled cavity problem of Maxwell's equations. 

\begin{lemma}\label{TraceRegularity}
For any $\boldsymbol{\phi}\in\boldsymbol{H}({\rm curl}, \Omega)$, the following estimate holds:
\[
\|\boldsymbol{\phi}_{\Gamma_R}\|_{TH^{-1/2}({\rm curl}, \Gamma_R)}\leq C\|\boldsymbol{\phi}\|_{\boldsymbol{H}({\rm curl}, \Omega)},
\]
where $C>0$ only depends on the domain $\Omega$.
\end{lemma}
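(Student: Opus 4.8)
The plan is to reduce the estimate, by a localization, to a spherical shell on which $\boldsymbol{\phi}$ can be expanded in vector spherical harmonics, and then to recover the two asymmetrically weighted pieces of the $TH^{-1/2}({\rm curl},\Gamma_R)$--norm from one-dimensional trace inequalities with an $n$-dependent parameter. First I would fix a radius $R_0$ with $R'<R_0<R$ and set $\Omega_0:=\{\boldsymbol{x}\in\mathbb{R}^3:R_0<|\boldsymbol{x}|<R\}$. Since $\overline{D}\subset B_{R'}\subset B_{R_0}$, the shell $\Omega_0$ is contained in $\Omega$ and shares its outer boundary $\Gamma_R$; moreover $\|\boldsymbol{\phi}\|_{\boldsymbol{H}({\rm curl},\Omega_0)}\leq\|\boldsymbol{\phi}\|_{\boldsymbol{H}({\rm curl},\Omega)}$ and the trace of $\boldsymbol{\phi}$ on $\Gamma_R$ is unchanged by restriction to $\Omega_0$. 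Hence it suffices to show $\|\boldsymbol{\phi}_{\Gamma_R}\|_{TH^{-1/2}({\rm curl},\Gamma_R)}\lesssim\|\boldsymbol{\phi}\|_{\boldsymbol{H}({\rm curl},\Omega_0)}$ with a constant depending only on $R$ and $R_0$, hence only on $\Omega$; and by a standard density argument it is enough to argue for smooth $\boldsymbol{\phi}$.

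Next, on $\Omega_0$ I would decompose $\boldsymbol{\phi}$ on each sphere $|\boldsymbol{x}|=r$, $r\in[R_0,R]$, into its radial and tangential parts and expand in spherical harmonics,
\[
\boldsymbol{\phi}=\Big(\sum_{n\geq1}\sum_{|m|\leq n}c_n^m(r)Y_n^m(\hat{\boldsymbol{x}})\Big)\boldsymbol{e}_\rho+\sum_{n\geq1}\sum_{|m|\leq n}\big(a_n^m(r)\boldsymbol{U}_n^m(\hat{\boldsymbol{x}})+b_n^m(r)\boldsymbol{V}_n^m(\hat{\boldsymbol{x}})\big),
\]
where $\{Y_n^m\}$ denotes the scalar spherical harmonics (the $n=0$ terms are absent from the tangential sum), so that the Fourier coefficients of the trace are $\phi_{1n}^m=a_n^m(R)$ and $\phi_{2n}^m=b_n^m(R)$. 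Writing $\nabla\times$ in spherical coordinates and using the orthonormality of $\{Y_n^m\}$ and of $\{\boldsymbol{U}_n^m,\boldsymbol{V}_n^m\}$, a computation gives the Parseval identities $\|\boldsymbol{\phi}\|_{\boldsymbol{L}^2(\Omega_0)}^2=\sum_{n,m}\int_{R_0}^R(|a_n^m|^2+|b_n^m|^2+|c_n^m|^2)r^2{\rm d}r$ and
\[
\|\nabla\times\boldsymbol{\phi}\|_{\boldsymbol{L}^2(\Omega_0)}^2=\sum_{n,m}\int_{R_0}^R\Big(\big|\tfrac1r(rb_n^m)'\big|^2+\tfrac{n(n+1)}{r^2}|b_n^m|^2+\big|\tfrac1r(ra_n^m)'-\tfrac{\sqrt{n(n+1)}}{r}c_n^m\big|^2\Big)r^2{\rm d}r.
\]
The decisive structural feature is that the $\boldsymbol{V}_n^m$-coefficient $b_n^m$ enters the curl energy with the extra factor $n(n+1)$, whereas the $\boldsymbol{U}_n^m$-coefficient $a_n^m$ appears only through $\tfrac1r(ra_n^m)'$ coupled to the radial coefficient $c_n^m$; this is precisely the asymmetry built into the norm \eqref{tcurl}.

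Then I would localize in $r$. For $g\in H^1(R_0,R)$ one has $|g(R)|^2\leq C(\|g\|_{L^2(R_0,R)}^2+\|g\|_{L^2(R_0,R)}\|g'\|_{L^2(R_0,R)})$. Applied to $g=b_n^m$ and with the cross term split by Young's inequality with weight $\sqrt{n(n+1)}$, this gives $\sqrt{1+n(n+1)}\,|b_n^m(R)|^2\lesssim n(n+1)\|b_n^m\|_{L^2(R_0,R)}^2+\|(b_n^m)'\|_{L^2(R_0,R)}^2$, and since $R_0>0$ both terms on the right are bounded by the $(n,m)$-summands of $\|\boldsymbol{\phi}\|_{\boldsymbol{L}^2(\Omega_0)}^2$ and $\|\nabla\times\boldsymbol{\phi}\|_{\boldsymbol{L}^2(\Omega_0)}^2$ (using $|(b_n^m)'|\leq|\tfrac1r(rb_n^m)'|+|b_n^m|/r$). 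Applied to $g=a_n^m$, again with weight $\sqrt{n(n+1)}$, it gives $(1+n(n+1))^{-1/2}|a_n^m(R)|^2\lesssim\|a_n^m\|_{L^2(R_0,R)}^2+(n(n+1))^{-1}\|(a_n^m)'\|_{L^2(R_0,R)}^2$, where $\|(a_n^m)'\|_{L^2(R_0,R)}^2$ is controlled, via the triangle inequality applied to $\tfrac1r(ra_n^m)'-\tfrac{\sqrt{n(n+1)}}{r}c_n^m$, by the corresponding curl summand plus $\|a_n^m\|_{L^2(R_0,R)}^2+n(n+1)\|c_n^m\|_{L^2(R_0,R)}^2$, so that after dividing by $n(n+1)$ all contributions are again absorbed. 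Summing over $n\geq1$ and $|m|\leq n$ and invoking the Parseval identities, the right-hand sides collapse to $\|\boldsymbol{\phi}\|_{\boldsymbol{H}({\rm curl},\Omega_0)}^2\leq\|\boldsymbol{\phi}\|_{\boldsymbol{H}({\rm curl},\Omega)}^2$, which is the claim.

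I expect the genuinely delicate step to be the second one: obtaining the exact spherical-coordinate representation of $\nabla\times\boldsymbol{\phi}$ in the $\{\boldsymbol{U}_n^m,\boldsymbol{V}_n^m,Y_n^m\boldsymbol{e}_\rho\}$ system and, in particular, correctly tracking the coupling between $a_n^m$ and $c_n^m$, since it is this bookkeeping — rather than the one-dimensional trace inequalities — that produces the exact exponents $(1+n(n+1))^{\pm1/2}$. (Alternatively, the lemma may be viewed as the restriction to $\Gamma_R$ of the classical tangential-component trace theorem $\boldsymbol{H}({\rm curl},\Omega)\to TH^{-1/2}({\rm curl},\partial\Omega)$, the shell computation then serving only to make the trace norm \eqref{tcurl} explicit.)
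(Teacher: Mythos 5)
Your proposal is correct and follows essentially the same route as the paper: restrict to a spherical shell adjacent to $\Gamma_R$, expand $\boldsymbol{\phi}$ in the basis $\{\boldsymbol{U}_n^m,\boldsymbol{V}_n^m,X_n^m\boldsymbol{e}_\rho\}$, express $\|\nabla\times\boldsymbol{\phi}\|^2_{\boldsymbol{L}^2}$ mode by mode, and apply a one-dimensional trace inequality with an $n$-dependent Young weight to produce the factors $(1+n(n+1))^{\pm 1/2}$. The only (immaterial) difference is that where you bound $\|(ra_n^m)'\|^2$ by the triangle inequality against the coupled term with $c_n^m$, the paper extracts the weight $\rho^2/(\rho^2+n(n+1))$ on $|(\rho\phi_{1n}^m)'|^2$ by an exact completion of squares in the curl energy.
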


\begin{proof}
Let $\Omega':=B_{R}\setminus \overline{B_{R'}}\subset\Omega$. For any $\boldsymbol{\phi}\in\boldsymbol{H}({\rm curl}, \Omega)$, it has the following Fourier series expansion in $\Omega'$: 
\begin{eqnarray*}
\boldsymbol{\phi} =\sum\limits_{n\in\mathbb{N}}\sum\limits_{|m|\leq n}\phi_{1n}^m(\rho)\boldsymbol{U}_n^m+\phi_{2n}^m(\rho)\boldsymbol{V}_n^m+\phi_{3n}^m(\rho)X_n^m\boldsymbol{e}_{\rho}.
\end{eqnarray*}
It is easy to verify from \eqref{curlUVX} that
\begin{eqnarray}\label{curlphi}
\nabla\times\boldsymbol{\phi}& =& -\frac{1}{\rho}\sum\limits_{n\in\mathbb{N}}\sum\limits_{|m|\leq n} \sqrt{n(n+1)}\phi_{2n}^m(\rho)X_{n}^m \boldsymbol{e}_{\rho}-\frac{1}{\rho}\sum\limits_{n\in\mathbb{N}}\sum\limits_{|m|\leq n} \frac{\rm d}{{\rm d}\rho}\left(\rho\phi_{2n}^m(\rho)\right)\boldsymbol{U}_n^m \notag\\
&& -\frac{1}{\rho}	\sum\limits_{n\in\mathbb{N}}\sum\limits_{|m|\leq n} \Big(\sqrt{n(n+1)}\phi_{3n}^m(\rho)-\frac{\rm d}{{\rm d}\rho}\left(\rho\phi_{1n}^m(\rho)\right)\Big)\boldsymbol{V}_n^m.
\end{eqnarray}
Substituting \eqref{curlphi} into \eqref{curlnorm}, we obtain 
\begin{eqnarray*}
&&\|\boldsymbol{\phi}\|^2_{\boldsymbol{H}({\rm curl}, B_R\setminus\overline{B_R'})}
=\frac{1}{R^2}\sum\limits_{n\in\mathbb{N}}\sum\limits_{|m|\leq n} \Bigg\{
\int_{R'}^{R} \left[\rho^2+n(n+1)\right] |\phi_{2n}^m(\rho)|^2
+\left|\frac{\rm d}{{\rm d}\rho}\left(\rho\phi_{2n}^m(\rho)\right)\right|^2{\rm d}\rho\\
&&\quad+\int_{R'}^{R}\left|\sqrt{n(n+1)}\phi_{3n}^m(\rho)-\frac{\rm d}{{\rm d}\rho}\left(\rho\phi_{1n}^m(\rho)\right)\right|^2{\rm d}\rho +\int_{R'}^R \rho^2\left(|\phi_{1n}^m|^2+|\phi_{3n}^m|^2\right){\rm d}\rho
\Bigg\}\\
&&=\frac{1}{R^2}\sum\limits_{n\in\mathbb{N}}\sum\limits_{|m|\leq n} \Bigg\{
\int_{R'}^{R} \left[\rho^2+n(n+1)\right] |\phi_{2n}^m(\rho)|^2
+\left|\frac{\rm d}{{\rm d}\rho}\left(\rho\phi_{2n}^m(\rho)\right)\right|^2{\rm d}\rho\\
&&\quad+\int_{R'}^R\rho^2\left|\phi_{1n}^m\right|^2
+\frac{\rho^2}{\rho^2+n(n+1)}\left|\frac{\rm d}{{\rm d}\rho}\left(\rho\phi_{1n}^m(\rho)\right)\right|^2{\rm d}\rho\\
&&\quad+\int_{R'}^R
\left|\sqrt{\frac{n(n+1)}{\rho^2+n(n+1)}}\frac{\rm d}{{\rm d}\rho}\left(\rho\phi_{1n}^m(\rho)\right)
-\sqrt{\rho^2+n(n+1)}\phi_{3n}^m(\rho)\right|^2 {\rm d}\rho\Bigg\}, 
\end{eqnarray*}
which gives
\begin{eqnarray}\label{tracestability1}
\|\boldsymbol{\phi}\|^2_{\boldsymbol{H}({\rm curl}, \Omega)}&\geq& 
\|\boldsymbol{\phi}\|^2_{\boldsymbol{H}({\rm curl}, B_R\setminus\overline{B_R'})}\notag\\
&\geq&\frac{1}{R^2}\sum\limits_{n\in\mathbb{N}}\sum\limits_{|m|\leq n} \Bigg\{
\int_{R'}^{R} \left[\rho^2+n(n+1)\right] |\phi_{2n}^m(\rho)|^2
+\left|\frac{\rm d}{{\rm d}\rho}\left(\rho\phi_{2n}^m(\rho)\right)\right|^2{\rm d}\rho\notag\\
&& +\int_{R'}^R\rho^2\left|\phi_{1n}^m\right|^2
+\frac{\rho^2}{\rho^2+n(n+1)}\left|\frac{\rm d}{{\rm d}\rho}\left(\rho\phi_{1n}^m(\rho)\right)\right|^2{\rm d}\rho
\Bigg\}.
\end{eqnarray}

A simple calculation shows that
\begin{eqnarray}\label{Trace2}
&& (R-R')R^2\left|\xi(R)\right|^2\notag\\
&=& \int_{R'}^R\left|\rho\xi(\rho)\right|^2{\rm d}\rho
+\int_{R'}^R\int_{\rho}^R \frac{\rm d}{{\rm d}\tau}\left|\tau\xi(\tau)\right|^2{\rm d}\tau{\rm d}\rho\notag\\
&\leq&\int_{R'}^R\left|\rho\xi(\rho)\right|^2{\rm d}\rho
+2(R-R')\int_{R'}^R\left|\rho\xi(\rho)\right|\left|\frac{\rm d}{{\rm d}\rho}\left(\rho\xi(\rho)\right)\right|{\rm d}\rho.
\end{eqnarray}
It follows from \eqref{Trace2} that
\begin{eqnarray}\label{phi1nm}
&&\frac{1}{\sqrt{1+n(n+1)}}\left|\phi_{1n}^m(R)\right|^2\notag\\
&\leq&\frac{1}{(R-R')R^2}\frac{1}{\sqrt{1+n(n+1)}}\Bigg[
\int_{R'}^R\left|\rho\phi_{1n}^m(\rho)\right|^2
+(R-R')\sqrt{1+n(n+1)}\left|\rho\phi_{1n}^m(\rho)\right|^2{\rm d}\rho \notag\\
&&+\frac{(R-R')}{\sqrt{1+n(n+1)}}\int_{R'}^R\left|\frac{\rm d}{{\rm d}\rho}
\left(\rho\phi_{1n}^m(\rho)\right)\right|^2{\rm d}\rho
\Bigg]\notag\\
&\leq& \frac{1}{R^2}\left(1+\frac{1}{R-R'}\right)\int_{R'}^R\left|\rho\phi_{1n}^m(\rho)\right|^2{\rm d}\rho
+\frac{1}{1+n(n+1)}\frac{1}{R^2}\int_{R'}^R\left|\frac{\rm d}{{\rm d}\rho}
\left(\rho\phi_{1n}^m(\rho)\right)\right|^2{\rm d}\rho.
\end{eqnarray}
Similarly, we have
\begin{eqnarray}\label{phi2nm}
&&\sqrt{1+n(n+1)}R^2\left|\phi_{2n}^m(R)\right|^2\notag\\
&&\leq \left[1+n(n+1)+\frac{\sqrt{1+n(n+1)}}{R-R'}\right]
\int_{R'}^R\left|\rho\phi_{2n}^m(\rho)\right|^2{\rm d}\rho
+\int_{R'}^R\left|\frac{\rm d}{{\rm d}\rho}
\left(\rho\phi_{2n}^m(\rho)\right)\right|^2{\rm d}\rho.
\end{eqnarray}
Taking the summation of \eqref{phi1nm} and \eqref{phi2nm} over $n\in\mathbb{N}$, and using the definition
\eqref{tcurl} and \eqref{tracestability1}, we complete the proof. 
\end{proof}

\begin{lemma}\label{stability2}
For any $\delta>0$, there is a positive constant $C(\delta)$ such that the following estimate holds:
\[
\|\boldsymbol{\phi}_{\Gamma_R}\|^2_{TH^{-1/2}(\Gamma_R)}
\lesssim\delta\|\nabla\times\boldsymbol{\phi}\|^2_{\boldsymbol{L}^2(\Omega)}
+C(\delta)\|\boldsymbol{\phi}\|_{\boldsymbol{L}^2(\Omega)}^2
\quad\forall\boldsymbol{\phi}\in\boldsymbol{H}(\rm curl, \Omega).
\]
\end{lemma}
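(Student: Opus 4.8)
The plan is to argue exactly as in the proof of Lemma~\ref{TraceRegularity}: expand $\boldsymbol{\phi}\in\boldsymbol{H}({\rm curl},\Omega)$ in the Fourier series of that lemma on $\Omega'=B_R\setminus\overline{B_{R'}}\subset\Omega$, and estimate the trace coefficients $\phi_{1n}^m(R)$ and $\phi_{2n}^m(R)$ one mode at a time. Since
\[
\|\boldsymbol{\phi}_{\Gamma_R}\|^2_{TH^{-1/2}(\Gamma_R)}=\sum_{n\in\mathbb{N}}\sum_{|m|\leq n}\frac{1}{\sqrt{1+n(n+1)}}\Big(|\phi_{1n}^m(R)|^2+|\phi_{2n}^m(R)|^2\Big),
\]
it suffices to bound $\frac{1}{\sqrt{1+n(n+1)}}|\phi_{jn}^m(R)|^2$ for each $(n,m)$ by $\delta$ times a quantity that is summable against $\|\boldsymbol{\phi}\|^2_{\boldsymbol{H}({\rm curl},\Omega)}$ plus $C(\delta)$ times a quantity summable against $\|\boldsymbol{\phi}\|^2_{\boldsymbol{L}^2(\Omega)}$, and then to sum over $n$ and $m$.

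The tool is the elementary one-dimensional trace bound \eqref{Trace2}, but with the cross term split by Young's inequality using a \emph{mode-dependent} weight $\epsilon_n>0$ instead of plain Cauchy--Schwarz: for any scalar function $\xi$ on $[R',R]$,
\[
(R-R')R^2|\xi(R)|^2\leq\Big(1+\frac{(R-R')^2}{\epsilon_n}\Big)\int_{R'}^R|\rho\xi(\rho)|^2\,{\rm d}\rho+\epsilon_n\int_{R'}^R\Big|\frac{{\rm d}}{{\rm d}\rho}\big(\rho\xi(\rho)\big)\Big|^2\,{\rm d}\rho .
\]
I also recall from the computation leading to \eqref{tracestability1} that the right-hand side of \eqref{tracestability1} — and hence $\|\boldsymbol{\phi}\|^2_{\boldsymbol{H}({\rm curl},\Omega)}$ — dominates, with constants depending only on $R$ and $R'$, each of the sums over $(n,m)$ of $\int_{R'}^R\rho^2\big(|\phi_{1n}^m|^2+|\phi_{2n}^m|^2\big)\,{\rm d}\rho$ (which in turn is $\lesssim\|\boldsymbol{\phi}\|^2_{\boldsymbol{L}^2(\Omega)}$), of $\int_{R'}^R|\frac{{\rm d}}{{\rm d}\rho}(\rho\phi_{2n}^m)|^2\,{\rm d}\rho$, and of $\int_{R'}^R\frac{\rho^2}{\rho^2+n(n+1)}|\frac{{\rm d}}{{\rm d}\rho}(\rho\phi_{1n}^m)|^2\,{\rm d}\rho$.

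I would then apply the displayed inequality twice. For $\xi=\phi_{2n}^m$ I choose $\epsilon_n$ proportional to $\delta\,(R-R')\sqrt{1+n(n+1)}$: dividing through by $(R-R')R^2\sqrt{1+n(n+1)}$ turns the derivative term into at most $\delta$ times the curl-controlled quantity $\int_{R'}^R|\frac{{\rm d}}{{\rm d}\rho}(\rho\phi_{2n}^m)|^2$, while the remaining term is at most $C(\delta)\int_{R'}^R\rho^2|\phi_{2n}^m|^2$ because $\sqrt{1+n(n+1)}\geq1$. For $\xi=\phi_{1n}^m$ I choose $\epsilon_n$ proportional to $\delta\,(1+n(n+1))^{-1/2}$ and use the elementary bound $\frac{\rho^2}{\rho^2+n(n+1)}\gtrsim(1+n(n+1))^{-1}$ valid for $\rho\in[R',R]$; then the derivative term is bounded by $\delta$ times the curl-controlled quantity $\int_{R'}^R\frac{\rho^2}{\rho^2+n(n+1)}|\frac{{\rm d}}{{\rm d}\rho}(\rho\phi_{1n}^m)|^2$, and the remaining term by $C(\delta)\int_{R'}^R\rho^2|\phi_{1n}^m|^2$. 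Summing over $n\in\mathbb{N}$, $|m|\leq n$, and invoking \eqref{tracestability1} yields $\|\boldsymbol{\phi}_{\Gamma_R}\|^2_{TH^{-1/2}(\Gamma_R)}\lesssim\delta\|\boldsymbol{\phi}\|^2_{\boldsymbol{H}({\rm curl},\Omega)}+C(\delta)\|\boldsymbol{\phi}\|^2_{\boldsymbol{L}^2(\Omega)}$, and absorbing the harmless term $\delta\|\boldsymbol{\phi}\|^2_{\boldsymbol{L}^2(\Omega)}$ into the $C(\delta)$ term gives the claim.

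The only point requiring care is the opposite monotonicity of $\epsilon_n$ in the two tangential components: the curl energy controls the derivative of $\rho\phi_{1n}^m$ only through the decaying weight $\frac{\rho^2}{\rho^2+n(n+1)}\sim n^{-2}$, whereas it controls the derivative of $\rho\phi_{2n}^m$ with no loss, so $\epsilon_n$ must decrease like $n^{-1}$ for the first coefficient and grow like $n$ for the second in order that, after summation, all derivative contributions be absorbed into $\delta\|\boldsymbol{\phi}\|^2_{\boldsymbol{H}({\rm curl},\Omega)}$ while the $\rho^2$-weighted remainders assemble into $C(\delta)\|\boldsymbol{\phi}\|^2_{\boldsymbol{L}^2(\Omega)}$, with $C(\delta)$ blowing up of order $\delta^{-1}$. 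Apart from this the argument is the same bookkeeping already carried out in the proof of Lemma~\ref{TraceRegularity}.
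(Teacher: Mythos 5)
Your proposal is correct and follows essentially the same route as the paper: the paper's estimates \eqref{phi01nm}--\eqref{phi02nm} are precisely your trace inequality \eqref{Trace2} with Young weights that, after the factor $(1+n(n+1))^{-1/2}$ is distributed, are effectively $\hat\delta(R-R')(1+n(n+1))^{-1/2}$ for $\phi_{1n}^m$ and $\hat\delta(R-R')(1+n(n+1))^{1/2}$ for $\phi_{2n}^m$ with $\hat\delta=\delta R^2$ — exactly the opposite monotonicity you identify. The only cosmetic difference is that the paper sums against the curl-only bound \eqref{tracestability2} and adds back the $\delta\sum\int|\rho\phi_{3n}^m|^2$ term, whereas you sum against \eqref{tracestability1} and absorb $\delta\|\boldsymbol{\phi}\|^2_{\boldsymbol{L}^2(\Omega)}$ into $C(\delta)$; both are valid.
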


\begin{proof}
For any $\boldsymbol{\phi}\in\boldsymbol{H}({\rm curl}, \Omega)$, it has the Fourier series expansion
in $\Omega'=B_{R}\setminus\overline{B_{R'}}$:
\begin{eqnarray*}
\boldsymbol{\phi}=\sum\limits_{n\in\mathbb{N}}\sum\limits_{|m|\leq n}\phi_{1n}^m(\rho)\boldsymbol{U}_n^m+\phi_{2n}^m(\rho)\boldsymbol{V}_n^m+\phi_{3n}^m(\rho)X_n^m\boldsymbol{e}_{\rho}.
\end{eqnarray*}
By \eqref{curlphi}, we have 
	\begin{eqnarray}\label{semicurl}
		&&\|\nabla\times\boldsymbol{\phi}\|^2_{\boldsymbol{L}^2(B_R\setminus\overline{B_{R'}})}
		=\frac{1}{R^2}\sum\limits_{n\in\mathbb{N}}\sum\limits_{|m|\leq n} \Bigg\{
		\int_{R'}^{R} n(n+1) |\phi_{2n}^m(\rho)|^2
		+\left|\frac{\rm d}{{\rm d}\rho}\left(\rho\phi_{2n}^m(\rho)\right)\right|^2{\rm d}\rho \notag\\
		&&\quad+\int_{R'}^{R}\left|\sqrt{n(n+1)}\phi_{3n}^m(\rho)
			-\frac{\rm d}{{\rm d}\rho}\left(\rho\phi_{1n}^m(\rho)\right)\right|^2{\rm d}\rho
		\Bigg\}\\
		&&=\frac{1}{R^2}\sum\limits_{n\in\mathbb{N}}\sum\limits_{|m|\leq n} \Bigg\{
		\int_{R'}^{R} n(n+1)|\phi_{2n}^m(\rho)|^2
		+\left|\frac{\rm d}{{\rm d}\rho}\left(\rho\phi_{2n}^m(\rho)\right)\right|^2{\rm d}\rho\notag\\
		&&\quad+\int_{R'}^R
		\frac{\rho^2}{\rho^2+n(n+1)}\left|\frac{\rm d}{{\rm d}\rho}\left(\rho\phi_{1n}^m(\rho)\right)\right|^2
		-\rho^2\left|\phi_{3n}^m\right|^2{\rm d}\rho\notag\\
		&&\quad+\int_{R'}^R
		\left|\sqrt{\frac{n(n+1)}{\rho^2+n(n+1)}}\frac{\rm d}{{\rm d}\rho}\left(\rho\phi_{1n}^m(\rho)\right)
		-\sqrt{\rho^2+n(n+1)}\phi_{3n}^m(\rho)\right|^2 {\rm d}\rho
		\Bigg\}, \notag
	\end{eqnarray}
	which gives 
	\begin{eqnarray}\label{tracestability2}
		\|\nabla\times\boldsymbol{\phi}\|^2_{\boldsymbol{L}^2(\Omega)}&\geq& 
		\|\nabla\times\boldsymbol{\phi}\|^2_{\boldsymbol{L}^2(B_R\setminus\overline{B_{R'}})}\notag\\
		&\geq&\frac{1}{R^2}\sum\limits_{n\in\mathbb{N}}\sum\limits_{|m|\leq n} \Bigg\{
		\int_{R'}^{R} n(n+1) |\phi_{2n}^m(\rho)|^2
		+\left|\frac{\rm d}{{\rm d}\rho}\left(\rho\phi_{2n}^m(\rho)\right)\right|^2{\rm d}\rho\notag\\
		&&+\int_{R'}^R
		\frac{\rho^2}{\rho^2+n(n+1)}\left|\frac{\rm d}{{\rm d}\rho}\left(\rho\phi_{1n}^m(\rho)\right)\right|^2
		-\rho^2\left|\phi_{3n}^m\right|^2{\rm d}\rho
		\Bigg\}.
	\end{eqnarray}

It follows from \eqref{Trace2} that
	\begin{eqnarray}\label{phi01nm}
	&&\frac{1}{\sqrt{1+n(n+1)}}\left|\phi_{1n}^m(R)\right|^2\notag\\
	&\leq&\frac{1}{(R-R')R^2}\frac{1}{\sqrt{1+n(n+1)}}\Bigg[
	\int_{R'}^R\left|\rho\phi_{1n}^m(\rho)\right|^2
	+\frac{1}{\hat{\delta}}(R-R')\sqrt{1+n(n+1)}\left|\rho\phi_{1n}^m(\rho)\right|^2 {\rm d}\rho \notag\\
	&&+\frac{(R-R')}{\sqrt{1+n(n+1)}}\hat{\delta}\int_{R'}^R\left|\frac{\rm d}{{\rm d}\rho}
		\left(\rho\phi_{1n}^m(\rho)\right)\right|^2{\rm d}\rho
	\Bigg]\notag\\
	&\leq& \frac{1}{R^2}\left(\frac{1}{\hat{\delta}}+\frac{1}{R-R'}\right)\int_{R'}^R\left|\rho\phi_{1n}^m(\rho)\right|^2{\rm d}\rho
	+\frac{1}{1+n(n+1)}\frac{\hat{\delta}}{R^2}\int_{R'}^R\left|\frac{\rm d}{{\rm d}\rho}
		\left(\rho\phi_{1n}^m(\rho)\right)\right|^2{\rm d}\rho. 
	\end{eqnarray}
Similarly, we have 
\begin{eqnarray}\label{phi02nm}
\frac{1}{\sqrt{1+n(n+1)}}\left|\phi_{2n}^m(R)\right|^2
&\leq& \frac{1}{R-R'}\frac{1}{R^2}\frac{1}{\sqrt{1+n(n+1)}}\Bigg[
\int_{R'}^R\left|\rho\phi_{2n}^m(\rho)\right|^2{\rm d}\rho\notag\\
&& +\frac{1}{\hat{\delta}}(R-R')\frac{1}{\sqrt{1+n(n+1)}}\int_{R'}^R\left|\rho\phi_{2n}^m(\rho)\right|^2{\rm d}\rho
\notag\\
&&+\hat{\delta}(R-R')\sqrt{1+n(n+1)}\int_{R'}^R\left|\frac{\rm d}{{\rm d}\rho}\left(\rho\phi_{2n}^m(\rho)\right)\right|^2{\rm d}\rho\Bigg]\notag\\
&\leq& \frac{\hat{\delta}}{R^2}\int_{R'}^R n(n+1)\left|\phi_{2n}^m(\rho)\right|^2
+\left|\frac{\rm d}{{\rm d}\rho}\left(\rho\phi_{2n}^m(\rho)\right)\right|^2{\rm d}\rho\notag\\
&&+\frac{1}{R-R'}\left[\frac{1}{R^2}+\frac{(R-R')^2}{\hat{\delta}}\right]
\int_{R'}^R\left|\rho\phi_{2n}^m(\rho)\right|^2{\rm d}\rho. \notag
\end{eqnarray}
Taking $\hat{\delta}=\delta R^2$ and summing over $n\in\mathbb{N}$, we obtain from \eqref{tracestability2}--\eqref{phi02nm} that
\begin{eqnarray*}
\|\boldsymbol{\phi}_{\Gamma_R}\|_{TH^{-1/2}(\Gamma_R)}^2 
&\leq&\delta\sum\limits_{n\in\mathbb{N}}\sum\limits_{|m|\leq n}\int_{R'}^R\Bigg[
\frac{1}{1+n(n+1)}\left|\frac{\rm d}{{\rm d}\rho}
\left(\rho\phi_{1n}^m(\rho)\right)\right|^2\notag\\
&&+n(n+1)\left|\phi_{2n}^m(\rho)\right|^2
+\left|\frac{\rm d}{{\rm d}\rho}\left(\rho\phi_{2n}^m(\rho)\right)\right|^2
-\left|\rho\phi_{3n}^m(\rho)\right|^2
\Bigg]{\rm d}\rho\\
&&+\delta\sum\limits_{n\in\mathbb{N}}\sum\limits_{|m|\leq n}\int_{R'}^R
\left|\rho\phi_{3n}^m(\rho)\right|^2{\rm d}\rho\notag\\
&&+C(\delta)\sum\limits_{n\in\mathbb{N}}\sum\limits_{|m|\leq n}\int_{R'}^R
\left[\left|\rho\phi_{1n}^m(\rho)\right|^2+
\left|\rho\phi_{2n}^m(\rho)\right|^2+\left|\rho\phi_{3n}^m(\rho)\right|^2\right]{\rm d}\rho\\
&\lesssim & \delta\|\nabla\times\boldsymbol{\phi}\|_{\boldsymbol{L}^2(\Omega)}^2
+C(\delta)\|\boldsymbol{\phi}\|^2_{\boldsymbol{L}^2(\Omega)},
\end{eqnarray*}
which completes the proof.
\end{proof}

The following lemma shows that the Fourier coefficients of the scattered field $\boldsymbol E^s$ decay exponentially with 
respect to the truncation number $N$. The result is crucial to deduce the estimates of the first two terms in \eqref{Key}.

\begin{lemma}\label{ExponentialDecay}
Let the scattered field $\boldsymbol{E}^s=\boldsymbol{E}-\boldsymbol{E}^{\rm inc}$ admit the Fourier series expansion in 
the domain $\mathbb{R}^3\setminus \overline {B_{R'}}$:
\[
\boldsymbol{E}^s=\sum\limits_{n\in\mathbb{N}}\sum\limits_{|m|\leq n}E_{1n}^{sm}(\rho) \boldsymbol{U}_n^m+E_{2n}^{sm}(\rho)\boldsymbol{V}_n^m+E_{3n}^{sm}(\rho)X_{n}^m \boldsymbol{e}_{\rho}.
\]
Then for sufficiently large $N$, the following estimates hold:
\[
\left|E_{jn}^{sm}(R)\right|\lesssim \left(\frac{R'}{R}\right)^n\left|E_{jn}^{sm}(R')\right|,\quad j=1,2.
\]
\end{lemma}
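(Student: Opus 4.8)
The plan is to reduce the claim to the well-known large-order behaviour of spherical Hankel functions. In the exterior region $\mathbb R^3\setminus\overline{B_{R'}}$ the scattered field satisfies the homogeneous system $\nabla\times(\nabla\times\boldsymbol E^s)-\kappa^2\boldsymbol E^s=0$ together with the Silver--M\"uller radiation condition, and it is also divergence free there. Substituting the stated Fourier expansion of $\boldsymbol E^s$ into this equation and applying the curl formula \eqref{curlphi} twice, the system decouples componentwise: a direct elimination shows that $\rho E_{2n}^{sm}(\rho)$ solves the Riccati--Bessel equation $u''+(\kappa^2-n(n+1)/\rho^2)u=0$, and that the same is true of $\rho$ times the $\boldsymbol V_n^m$-coefficient $w$ of $\nabla\times\boldsymbol E^s$; the identity $\kappa^2\boldsymbol E^s=\nabla\times(\nabla\times\boldsymbol E^s)$ then expresses $E_{1n}^{sm}$ in terms of $w$. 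The general solution of the Riccati--Bessel equation is spanned by $\rho h_n^{(1)}(\kappa\rho)$ and $\rho h_n^{(2)}(\kappa\rho)$, and the radiation condition selects the outgoing one. Since $\frac{\rm d}{{\rm d}\rho}\bigl(\rho h_n^{(1)}(\kappa\rho)\bigr)=h_n^{(1)}(\kappa\rho)\bigl(1+z_n^{(1)}(\kappa\rho)\bigr)$ with $z_n^{(1)}(z)=z h_n^{(1)'}(z)/h_n^{(1)}(z)$ (the logarithmic-derivative quantity appearing in the Calder\'on operator \eqref{CapacityOpe}), this gives the representations
\[
E_{2n}^{sm}(\rho)=c_{2n}^{m}\,h_n^{(1)}(\kappa\rho),\qquad
E_{1n}^{sm}(\rho)=\frac{c_{1n}^{m}}{\kappa^2\rho}\,h_n^{(1)}(\kappa\rho)\bigl(1+z_n^{(1)}(\kappa\rho)\bigr)
\]
for constants $c_{1n}^m,c_{2n}^m$ depending on the incident field and the obstacle.

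Second, I would eliminate the unknown constants by evaluating these identities at $\rho=R$ and $\rho=R'$, which yields
\begin{align*}
E_{2n}^{sm}(R)&=\frac{h_n^{(1)}(\kappa R)}{h_n^{(1)}(\kappa R')}\,E_{2n}^{sm}(R'),\\
E_{1n}^{sm}(R)&=\frac{R'}{R}\cdot\frac{h_n^{(1)}(\kappa R)\bigl(1+z_n^{(1)}(\kappa R)\bigr)}{h_n^{(1)}(\kappa R')\bigl(1+z_n^{(1)}(\kappa R')\bigr)}\,E_{1n}^{sm}(R')
\end{align*}
(if a denominator on the right vanishes, the corresponding constant is zero and the asserted bound is trivial). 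It then suffices to bound the two Hankel ratios. Using the large-order asymptotics with fixed argument, $h_n^{(1)}(t)=-{\rm i}\,(2n-1)!!\,t^{-(n+1)}\bigl(1+o(1)\bigr)$ and $z_n^{(1)}(t)=-(n+1)\bigl(1+o(1)\bigr)$ as $n\to\infty$, one obtains for all sufficiently large $n$
\[
\left|\frac{h_n^{(1)}(\kappa R)}{h_n^{(1)}(\kappa R')}\right|
=\left(\frac{R'}{R}\right)^{n+1}\bigl(1+o(1)\bigr)\lesssim\left(\frac{R'}{R}\right)^{n},
\qquad
\left|\frac{1+z_n^{(1)}(\kappa R)}{1+z_n^{(1)}(\kappa R')}\right|=1+o(1)\lesssim 1,
\]
where the first estimate uses $R'<R$. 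Inserting these into the two identities above gives $|E_{jn}^{sm}(R)|\lesssim(R'/R)^n|E_{jn}^{sm}(R')|$ for $j=1,2$ and all $n$ past a fixed threshold, which is precisely the assertion once $N$ is taken large enough.

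The main obstacle is making the bound on the Hankel ratios uniform in $n$: the asymptotic expansions above hold for each fixed argument as $n\to\infty$, so to promote them to an inequality $\lesssim$ with an $n$-independent constant one must argue that the $o(1)$ remainders stay controlled as soon as $n\ge N$ --- which is exactly why the statement only asks for $N$ sufficiently large --- or, alternatively, replace the asymptotics by a genuinely uniform estimate derived from the closed-form finite sum for $h_n^{(1)}$ or from its three-term recurrence. A lesser but still delicate point is the bookkeeping in the first step: one must carry the factors $\sqrt{n(n+1)}$ and the scalar spherical harmonics $X_n^m$ correctly through the double application of \eqref{curlphi} so that the decoupling into Riccati--Bessel equations is clean and, in particular, the extra factor $1+z_n^{(1)}(\kappa\rho)$ attached to $E_{1n}^{sm}$ comes out with the right power of $\rho$.
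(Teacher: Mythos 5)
Your proposal is correct and follows essentially the same route as the paper: you arrive at exactly the ratio identities \eqref{ERR} (which the paper simply quotes from \cite[Theorem 6.27]{CK98} rather than rederiving from the Riccati--Bessel reduction) and then bound the Hankel ratio by the large-order asymptotics $h_n^{(1)}(t)\sim -{\rm i}(2n-1)!!\,t^{-(n+1)}$ and the factor involving $1+z_n^{(1)}$ by its $O(n)$ behaviour. The only cosmetic difference is that the paper controls the $z_n^{(1)}$-ratio via the uniform two-sided bound $C_1 n\le|1+z_n^{(1)}(t)|\le C_2 n$ from \cite[Lemma 9.20]{Monk-2003}, which also disposes of the uniformity concern you raise at the end.
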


\begin{proof}
It follows from \cite[Theorem 6.27]{CK98}  that
\begin{equation}\label{ERR}
\frac{E_{1n}^{sm}(R)}{E_{1n}^{sm}(R')}=\frac{R'}{R}\frac{h_n^{(1)}(\kappa R)}{h_n^{(1)}(\kappa R')}\frac{1+z_n^{(1)}(\kappa R)}{1+z_n^{(1)}(\kappa R')},\quad \frac{E_{2n}^{sm}(R)}{E_{2n}^{sm}(R')}=\frac{h_n^{(1)}(\kappa R)}{h_n^{(1)}(\kappa R')}.
\end{equation}
By \cite[$(2.39)$]{CK98}, for sufficiently large $n$, the spherical hankel functions admit the 
asymptotic behavior
\[
h_n^{(j)}(t)\sim(-1)^j {\rm i}\frac{(2n-1)!!}{t^{n+1}}.
\]
For any integer $n$, it is shown in \cite[Lemma 9.20]{Monk-2003} that
\begin{equation}\label{zn}
C_1 n\leq \big|1+z_n^{(1)}(t)\big|\leq C_2 n,
\end{equation}
where $C_1, C_2$ are constants independent of $n$. Substituting the above estimates into \eqref{ERR} completes the proof. 
\end{proof}

The following Birman--Solomyak decomposition theorem is useful in the subsequent analysis (cf. \cite{CWZ-2007-SIAM}).

\begin{lemma}\label{Decomposition}
For any $\boldsymbol{\phi}\in \boldsymbol{H}(\rm{curl}, \Omega)$, there exist a vector function
$\boldsymbol{\Phi}\in \boldsymbol{H}({\rm curl}, \Omega)\cap \boldsymbol{H}^1(\Omega)$
and a scalar function $\varphi\in H_0^1(\Omega)$ such that $\boldsymbol{\phi}=\boldsymbol{\Phi}+\nabla\varphi$, $\nabla\cdot\boldsymbol{\Phi}=0$, and
\[
\|\varphi\|_{H^1(\Omega)}+\|\boldsymbol{\Phi}\|_{\boldsymbol{H}^1(\Omega)}\leq C\|\boldsymbol{\phi}\|_{\boldsymbol{H}({\rm curl},\, \Omega)},
\] 
where $C>0$ is a constant depending on $\Omega$. 
\end{lemma}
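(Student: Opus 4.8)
The plan is to prove the decomposition by an ``extend, then split'' argument: lift $\boldsymbol{\phi}$ to a compactly supported field on $\mathbb{R}^3$, read off a regular ($\boldsymbol{H}^1$) divergence-free vector potential from the Biot--Savart formula, peel off the resulting curl-free remainder as a gradient, and finally correct the scalar part so that it lands in $H_0^1(\Omega)$.

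First I would extend. Since $\partial\Omega=\Gamma_R\cup\partial D$ is Lipschitz, there is a bounded extension operator $\boldsymbol{H}(\mathrm{curl},\Omega)\to\boldsymbol{H}(\mathrm{curl},\mathbb{R}^3)$, built from a partition of unity and local reflections across $\Gamma_R$ (into $\mathbb{R}^3\setminus\overline{B_R}$) and across $\partial D$ (filling in the obstacle). After multiplying by a fixed cutoff equal to $1$ on a neighbourhood of $\overline{\Omega}$ we obtain $\boldsymbol{\phi}^{*}\in\boldsymbol{H}(\mathrm{curl},\mathbb{R}^3)$ with compact support, $\boldsymbol{\phi}^{*}=\boldsymbol{\phi}$ on $\Omega$, and $\|\boldsymbol{\phi}^{*}\|_{\boldsymbol{H}(\mathrm{curl},\mathbb{R}^3)}\lesssim\|\boldsymbol{\phi}\|_{\boldsymbol{H}(\mathrm{curl},\Omega)}$. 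The point of passing to all of $\mathbb{R}^3$ is that the classical potential-theoretic tools below are then available without the pathologies that occur near a reentrant part of $\partial D$.

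Second I would produce the regular part. Set $\boldsymbol{w}:=\nabla\times\boldsymbol{\phi}^{*}$, a compactly supported $\boldsymbol{L}^2(\mathbb{R}^3)$ field with $\nabla\cdot\boldsymbol{w}=0$, and let $G(\boldsymbol{x})=1/(4\pi|\boldsymbol{x}|)$ be the fundamental solution of $-\Delta$. Put $\boldsymbol{A}:=\nabla\times(G*\boldsymbol{w})$. Then $\nabla\cdot\boldsymbol{A}=0$ and, using $\nabla\times\nabla\times=\nabla(\nabla\cdot)-\Delta$ together with $-\Delta G=\delta$ and $\nabla\cdot\boldsymbol{w}=0$, $\nabla\times\boldsymbol{A}=\nabla(G*\nabla\cdot\boldsymbol{w})+\boldsymbol{w}=\boldsymbol{w}$; moreover the Calder\'{o}n--Zygmund estimates for the Newtonian potential give $\|\boldsymbol{A}\|_{\boldsymbol{H}^1(\Omega)}\lesssim\|\boldsymbol{w}\|_{\boldsymbol{L}^2(\mathbb{R}^3)}\lesssim\|\boldsymbol{\phi}\|_{\boldsymbol{H}(\mathrm{curl},\Omega)}$. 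Now $\boldsymbol{\phi}^{*}-\boldsymbol{A}$ is curl-free on $\mathbb{R}^3$, hence equals $\nabla p^{*}$ for some $p^{*}\in H^1_{\mathrm{loc}}(\mathbb{R}^3)$; normalizing $p^{*}$ to have zero mean over $\Omega$ and invoking the Poincar\'{e} inequality on $\Omega$ gives $\|p^{*}\|_{H^1(\Omega)}\lesssim\|\nabla p^{*}\|_{\boldsymbol{L}^2(\Omega)}\lesssim\|\boldsymbol{\phi}\|_{\boldsymbol{H}(\mathrm{curl},\Omega)}$. Restricting to $\Omega$ we have $\boldsymbol{\phi}=\boldsymbol{A}|_{\Omega}+\nabla p$ with $p:=p^{*}|_{\Omega}\in H^1(\Omega)$, $\boldsymbol{A}|_{\Omega}\in\boldsymbol{H}^1(\Omega)$, $\nabla\cdot(\boldsymbol{A}|_{\Omega})=0$, and all norms controlled by $\|\boldsymbol{\phi}\|_{\boldsymbol{H}(\mathrm{curl},\Omega)}$.

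The last step promotes the scalar part to $H_0^1(\Omega)$. Let $\eta\in H^1(\Omega)$ solve $\Delta\eta=0$ in $\Omega$ with $\eta=p$ on $\partial\Omega$, so that $p-\eta\in H_0^1(\Omega)$ and $\|\eta\|_{H^1(\Omega)}\lesssim\|p\|_{H^{1/2}(\partial\Omega)}\lesssim\|p\|_{H^1(\Omega)}$ by the trace theorem and continuous dependence; then set $\varphi:=p-\eta$ and $\boldsymbol{\Phi}:=\boldsymbol{A}|_{\Omega}+\nabla\eta$. This yields $\boldsymbol{\phi}=\boldsymbol{\Phi}+\nabla\varphi$ with $\varphi\in H_0^1(\Omega)$ satisfying the stated bound, and $\nabla\cdot\boldsymbol{\Phi}=\nabla\cdot(\boldsymbol{A}|_{\Omega})+\Delta\eta=0$. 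The one genuinely delicate point — and the place I expect the real work to be — is the $\boldsymbol{H}^1(\Omega)$ bound for $\boldsymbol{\Phi}$, equivalently $\nabla\eta\in\boldsymbol{H}^1(\Omega)$, i.e. $H^2$-regularity of the harmonic corrector up to $\partial\Omega$: on the smooth piece $\Gamma_R$ this is standard, whereas near the Lipschitz piece $\partial D$ one must either exploit the particular structure of $\Omega$ and of the trace $p|_{\partial\Omega}$ or appeal directly to the Birman--Solomyak construction as carried out in \cite{CWZ-2007-SIAM}. Everything else — the extension bound, the Calder\'{o}n--Zygmund bound for $\boldsymbol{A}$, the Poincar\'{e} bound for $p$, and the trace/stability bound for $\eta$ — is routine bookkeeping of constants.
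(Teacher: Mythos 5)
The paper itself offers no proof of this lemma at all --- it is quoted from \cite{CWZ-2007-SIAM} --- so your proposal must stand on its own, and it has a genuine gap. Your first two steps are essentially sound: an $\boldsymbol{H}({\rm curl})$ extension (with the caveat that a componentwise reflection does not preserve the curl structure across a Lipschitz interface; one must pull the field back under the bilipschitz reflection, or invoke a universal extension operator) followed by the Newtonian potential does give $\boldsymbol{\phi}=\boldsymbol{A}+\nabla p$ on $\Omega$ with $\boldsymbol{A}\in\boldsymbol{H}^1(\Omega)$, $\nabla\cdot\boldsymbol{A}=0$, $p\in H^1(\Omega)$, and the stability bound. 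The failure is exactly at the step you flag, and it is worse than you suggest: the Dirichlet datum of your harmonic corrector $\eta$ is $p|_{\partial\Omega}$, which is only in $H^{1/2}(\partial\Omega)$, so $\eta$ is in $H^1(\Omega)$ and nothing more; the $H^2(\Omega)$ regularity needed for $\nabla\eta\in\boldsymbol{H}^1(\Omega)$ fails even near the smooth sphere $\Gamma_R$, because the obstruction is the roughness of the datum, not of the boundary. Declaring this part ``standard'' on $\Gamma_R$ is therefore incorrect, and the fallback ``appeal directly to \cite{CWZ-2007-SIAM}'' is not a proof --- it is precisely the citation the paper already makes in lieu of one.

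Moreover, this last step cannot be repaired within your scheme, because the conclusion with $\varphi\in H_0^1(\Omega)$ is too strong for an arbitrary $\boldsymbol{\phi}\in\boldsymbol{H}({\rm curl},\Omega)$. If $\boldsymbol{\phi}=\boldsymbol{\Phi}+\nabla\varphi$ with $\boldsymbol{\Phi}\in\boldsymbol{H}^1(\Omega)$ and $\varphi\in H_0^1(\Omega)$, then the tangential trace of $\nabla\varphi$ on $\Gamma_R$ vanishes, so the tangential trace of $\boldsymbol{\phi}$ on $\Gamma_R$ coincides with that of an $\boldsymbol{H}^1(\Omega)$ field and hence lies in $TH^{1/2}(\Gamma_R)$. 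Taking $\boldsymbol{\phi}=\nabla\psi$ with $\psi\in H^1(\Omega)$ whose trace on $\Gamma_R$ lies in $H^{1/2}(\Gamma_R)$ but not in $H^{3/2}(\Gamma_R)$ contradicts this, since the tangential trace of $\boldsymbol{\phi}$ is then the surface gradient $\nabla_{\Gamma}\,(\psi|_{\Gamma_R})$, which belongs to $TH^{1/2}(\Gamma_R)$ only if $\psi|_{\Gamma_R}\in H^{3/2}(\Gamma_R)$. What your steps one and two actually establish is the correct regular decomposition with $\varphi\in H^1(\Omega)$; the $H_0^1$ version requires a vanishing tangential trace of $\boldsymbol{\phi}$ on the portion of the boundary where $\varphi$ is to vanish, and any complete argument (including the boundary-condition-preserving construction behind \cite{CWZ-2007-SIAM}) must exploit that condition rather than a harmonic lift of rough data. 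So either prove that version under the appropriate tangential boundary condition, or weaken the scalar part to $\varphi\in H^1(\Omega)$; as written, the proposal does not prove the stated lemma.
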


Let $U_h$ be the standard $H^1$-conforming piecewise linear isoparametric finite element space over
$\mathcal{M}_h$. Lemma \ref{Key2} is concerned with the bounds for the first two terms of \eqref{Key}.
To prove it, we introduce the Cl\'ement interpolation operator
$\Pi_h: H_{\partial D}^1(\Omega)\rightarrow U_h$ and the Beck--Hiptmair--Hoppe--Wohlmuth interpolation 
operator $\mathscr{P}_h: \boldsymbol{H}^1(\Omega)\cap \boldsymbol{H}_{\partial D}({\rm curl}, \Omega)
\rightarrow \boldsymbol{V}_h$, which satisfy the following properties (cf. \cite{CC-mc08}): 
\begin{equation}\label{interpolation1}
	\|\phi-\Pi_h \phi\|_{L^2(K)}\leq C h_K\|\nabla\phi\|_{\boldsymbol{L}^2(\tilde{K})},\quad
	\|\phi-\Pi_h\phi\|_{L^2(F)}\leq Ch_F^{1/2}\|\nabla\phi\|_{\boldsymbol{L}^2(\tilde{F})},
\end{equation}
and
\begin{equation}\label{interpolation2}
	\|\boldsymbol{\Phi}-\mathscr{P}_h\boldsymbol{\Phi}\|_{\boldsymbol{L}^2(K)}\leq
	Ch_K\|\nabla\boldsymbol{\Phi}\|_{\boldsymbol{L}^2(\tilde{K})},\quad
	\|\boldsymbol{\Phi}-\mathscr{P}_h\boldsymbol{\Phi}\|_{\boldsymbol{L}^2(F)}\leq
	Ch_F^{1/2}\|\nabla\boldsymbol{\Phi}\|_{\boldsymbol{L}^2(\tilde{F})},
\end{equation}
where $\tilde{K}$ or $\tilde{F}$ is the union of elements on $\mathcal{M}_h$ with non-empty intersection
with $K$ or $F$, respectively.

\begin{lemma}\label{Key2}
For sufficiently large $N$ and for any $\boldsymbol{\psi}\in \boldsymbol{H}_{\partial D}({\rm curl}, \Omega)$, the following estimate holds: 
\begin{eqnarray*}
&&\left|a(\boldsymbol{\xi}, \boldsymbol{\psi})+{\rm i}\kappa\int_{\Gamma_R}
\left(\mathscr{T}-\mathscr{T}^N\right)\boldsymbol{\xi}_{\Gamma_R}
\cdot\overline{\boldsymbol{\psi}}_{\Gamma_R}{\rm d}s\right|\\
&&\lesssim\left(\Bigg(\sum\limits_{K\in\mathcal{M}_h}\eta_K^2\Bigg)^{1/2}
+\left(\frac{R'}{R}\right)^N\|\boldsymbol{f}\|_{TH^{-1/2}({\rm div}, \Gamma_R)}\right)
\|\boldsymbol{\psi}\|_{\boldsymbol{H}({\rm curl}, \Omega)}.
\end{eqnarray*}
\end{lemma}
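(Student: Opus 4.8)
The plan is to combine the standard residual-based a posteriori error machinery with the exponential-decay estimate of Lemma \ref{ExponentialDecay}. First I would write, for any $\boldsymbol\psi\in\boldsymbol H_{\partial D}({\rm curl},\Omega)$,
\[
a(\boldsymbol\xi,\boldsymbol\psi)+{\rm i}\kappa\int_{\Gamma_R}(\mathscr T-\mathscr T^N)\boldsymbol\xi_{\Gamma_R}\cdot\overline{\boldsymbol\psi}_{\Gamma_R}\,{\rm d}s
= a^N(\boldsymbol\xi,\boldsymbol\psi)
= a^N(\boldsymbol E-\boldsymbol E_h^N,\boldsymbol\psi),
\]
and then use the two variational identities \eqref{vform} and \eqref{vNhform} to rewrite this as a sum of the data-error term $\int_{\Gamma_R}(\boldsymbol f^N-\boldsymbol f)\cdot\overline{\boldsymbol\psi}_{\Gamma_R}$, the consistency defect $a^N(\boldsymbol E,\boldsymbol\psi)-a(\boldsymbol E,\boldsymbol\psi)={\rm i}\kappa\int_{\Gamma_R}(\mathscr T^N-\mathscr T)\boldsymbol E_{\Gamma_R}\cdot\overline{\boldsymbol\psi}_{\Gamma_R}$, and the Galerkin-orthogonality-type term $a^N(\boldsymbol E,\boldsymbol\psi)-a_h^N(\boldsymbol E_h^N,\boldsymbol\psi_h)$ with $\boldsymbol\psi_h\in\boldsymbol V_h$ arbitrary (noting $a^N$ and $a_h^N$ agree on $\boldsymbol V_h\times\boldsymbol V_h$). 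The upshot is that the left-hand side equals the Galerkin residual tested against $\boldsymbol\psi-\boldsymbol\psi_h$ plus a boundary term involving $(\mathscr T-\mathscr T^N)$ applied to $\boldsymbol E_{\Gamma_R}$ rather than $\boldsymbol\xi_{\Gamma_R}$.

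Next I would handle the finite element part in the usual way. Using the Birman--Solomyak decomposition of Lemma \ref{Decomposition}, write $\boldsymbol\psi=\boldsymbol\Phi+\nabla\varphi$ with $\boldsymbol\Phi\in\boldsymbol H^1$, $\varphi\in H_0^1$, choose $\boldsymbol\psi_h=\mathscr P_h\boldsymbol\Phi+\nabla\Pi_h\varphi$, integrate by parts element-by-element, and collect the interior residuals $R_K^{(1)}, R_K^{(2)}$ and the face jumps $J_F^{(1)}, J_F^{(2)}$ — the divergence residual $R_K^{(2)}$ and the normal-jump $J_F^{(2)}$ arise precisely from testing against the gradient part $\nabla(\varphi-\Pi_h\varphi)$, using $\nabla\times\boldsymbol E_h^N$ being piecewise constant and $\nabla\cdot\boldsymbol\Phi=0$. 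The interpolation estimates \eqref{interpolation1}--\eqref{interpolation2} together with the stability bound in Lemma \ref{Decomposition} then bound this contribution by $C\big(\sum_K\eta_K^2\big)^{1/2}\|\boldsymbol\psi\|_{\boldsymbol H({\rm curl},\Omega)}$; the boundary faces on $\Gamma_R$ contribute the $\mathscr T^N$-dependent jump terms in $J_F^{(1)}, J_F^{(2)}$, and here one uses the trace bound of Lemma \ref{TraceRegularity} to control $\|\boldsymbol\psi_{\Gamma_R}\|$ in terms of $\|\boldsymbol\psi\|_{\boldsymbol H({\rm curl},\Omega)}$.

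Finally, for the DtN truncation contribution, estimate
\[
\left|{\rm i}\kappa\int_{\Gamma_R}(\mathscr T-\mathscr T^N)\boldsymbol E_{\Gamma_R}\cdot\overline{\boldsymbol\psi}_{\Gamma_R}\,{\rm d}s
+\int_{\Gamma_R}(\boldsymbol f^N-\boldsymbol f)\cdot\overline{\boldsymbol\psi}_{\Gamma_R}\,{\rm d}s\right|.
\]
Because $\boldsymbol E^{\rm inc}$ is smooth near $\Gamma_R$, the $\boldsymbol f^N-\boldsymbol f$ term decays faster than any power, so it is absorbed; the main term reduces, since $\mathscr T-\mathscr T^N$ involves only Fourier modes $n>N$ with multipliers of size $O(1)\cdot(\kappa R)^{\pm1}$ times $(1+z_n^{(1)})^{\pm1}\sim n^{\pm1}$ by \eqref{zn}, to bounding the high-frequency tail of $\boldsymbol E_{\Gamma_R}=\boldsymbol E^{\rm inc}_{\Gamma_R}+\boldsymbol E^s_{\Gamma_R}$. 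The incident part is again negligible, and for the scattered part I would invoke Lemma \ref{ExponentialDecay}: $|E_{jn}^{sm}(R)|\lesssim (R'/R)^n|E_{jn}^{sm}(R')|$, so the tail $\sum_{n>N}$ is controlled by $(R'/R)^{N}$ times $\sum_n|E_{jn}^{sm}(R')|^2$ (with appropriate $n$-weights), which by a trace argument on $\Gamma_{R'}\subset\Omega$ and the relation between $\boldsymbol E^s$ and $\boldsymbol f$ on $\Gamma_R$ is bounded by $\|\boldsymbol f\|_{TH^{-1/2}({\rm div},\Gamma_R)}$. Combining with the Cauchy--Schwarz pairing against $\boldsymbol\psi_{\Gamma_R}$ and Lemma \ref{TraceRegularity} gives the $(R'/R)^N\|\boldsymbol f\|_{TH^{-1/2}({\rm div},\Gamma_R)}\|\boldsymbol\psi\|_{\boldsymbol H({\rm curl},\Omega)}$ term. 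The main obstacle I expect is bookkeeping the correct Sobolev-index weights so that the $n^{\pm1}$ factors from the Calder\'on multipliers and the $TH^{-1/2}$ norms match up cleanly with the exponential factor $(R'/R)^n$ — and making sure the replacement of $\boldsymbol\xi_{\Gamma_R}$ by $\boldsymbol E_{\Gamma_R}$ (achieved via $\boldsymbol E_h^N\in\boldsymbol V_h$ only seeing the finite part $\mathscr T^N$) is exploited correctly so that no uncontrolled $\boldsymbol E_h^N$-trace appears in the truncation estimate.
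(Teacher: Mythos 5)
Your proposal follows essentially the same route as the paper: the identity $a(\boldsymbol\xi,\boldsymbol\psi)+{\rm i}\kappa\int_{\Gamma_R}(\mathscr T-\mathscr T^N)\boldsymbol\xi_{\Gamma_R}\cdot\overline{\boldsymbol\psi}_{\Gamma_R}\,{\rm d}s=a^N(\boldsymbol\xi,\boldsymbol\psi)$, the splitting into a Galerkin residual tested against $\boldsymbol\psi-\boldsymbol\psi_h$ plus a truncation term, the Birman--Solomyak decomposition with the two interpolants for the residual part, and Lemma \ref{ExponentialDecay} together with \eqref{zn}, Lemma \ref{TraceRegularity} and the stability of \eqref{vform} for the truncation part. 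The one place where the paper is cleaner is the treatment of $\boldsymbol f-\boldsymbol f^N$: since $\boldsymbol f-\boldsymbol f^N=-{\rm i}\kappa(\mathscr T-\mathscr T^N)\boldsymbol E^{\rm inc}_{\Gamma_R}$, this term cancels exactly against the incident part of ${\rm i}\kappa\int_{\Gamma_R}(\mathscr T-\mathscr T^N)\boldsymbol E_{\Gamma_R}\cdot\overline{\boldsymbol\psi}_{\Gamma_R}\,{\rm d}s$, leaving only ${\rm i}\kappa\int_{\Gamma_R}(\mathscr T-\mathscr T^N)\boldsymbol E^s_{\Gamma_R}\cdot\overline{\boldsymbol\psi}_{\Gamma_R}\,{\rm d}s$, so no separate smoothness or rapid-decay argument for $\boldsymbol E^{\rm inc}$ is needed and the bound closes directly via Lemma \ref{ExponentialDecay} and $\|\boldsymbol E\|_{\boldsymbol H({\rm curl},\Omega)}\lesssim\|\boldsymbol f\|_{TH^{-1/2}({\rm div},\Gamma_R)}$.
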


\begin{proof}
A simple calculation shows that
	\begin{eqnarray*}
	&&a(\boldsymbol{\xi}, \boldsymbol{\psi})+{\rm i}\kappa\int_{\Gamma_R}
		(\mathscr{T}-\mathscr{T}^N)\boldsymbol{\xi}_{\Gamma_R}
		\cdot\overline{\boldsymbol{\psi}}_{\Gamma_R}{\rm d}s\\
	&=& a(\boldsymbol{E}, \boldsymbol{\psi})-a_h^N(\boldsymbol{E}_h^N, \boldsymbol{\psi}_h)
		+a_h^N(\boldsymbol{E}_h^N, \boldsymbol{\psi}_h)-a(\boldsymbol{E}_h^N, \boldsymbol{\psi})\\
	&& +{\rm i}\kappa\int_{\Gamma_R}(\mathscr{T}-\mathscr{T}^N)\boldsymbol{E}_{\Gamma_R}
		\cdot\overline{\boldsymbol{\psi}}_{\Gamma_R}{\rm d}s
		-{\rm i}\kappa\int_{\Gamma_R}(\mathscr{T}-\mathscr{T}^N)\left(\boldsymbol{E}^N_{h}\right)_{\Gamma_R}
		\cdot\overline{\boldsymbol{\psi}}_{\Gamma_R}{\rm d}s\\
	&=& (\boldsymbol{f}-\boldsymbol{f}^N, \boldsymbol{\psi})
		+(\boldsymbol{f}^N, \boldsymbol{\psi}-\boldsymbol{\psi}_h)
		-a_h^N(\boldsymbol{E}_h^N, \boldsymbol{\psi}-\boldsymbol{\psi}_h)
		+{\rm i}\kappa\int_{\Gamma_R}(\mathscr{T}-\mathscr{T}^N)\boldsymbol{E}_{\Gamma_R}
		\cdot\overline{\boldsymbol{\psi}}_{\Gamma_R}{\rm d}s	\\
	&=& (\boldsymbol{f}^N, \boldsymbol{\psi}-\boldsymbol{\psi}_h)
		-a_h^N(\boldsymbol{E}_h^N, \boldsymbol{\psi}-\boldsymbol{\psi}_h)
		+(\boldsymbol{f}-\boldsymbol{f}^N, \boldsymbol{\psi})
		+{\rm i}\kappa\int_{\Gamma_R}(\mathscr{T}-\mathscr{T}^N)\boldsymbol{E}_{\Gamma_R}
		\cdot\overline{\boldsymbol{\psi}}_{\Gamma_R}{\rm d}s\\
	&:=& J_1+J_2.
	\end{eqnarray*}
By \eqref{aNh}, we have 
	\begin{eqnarray*}
		J_1 &=& -a_h^N(\boldsymbol{E}_h^N, \boldsymbol{\psi}-\boldsymbol{\psi}_h)
			+(\boldsymbol{f}^N, \boldsymbol{\psi}-\boldsymbol{\psi}_h)\\
			&=& -\int_{\Omega} (\nabla\times\boldsymbol{E}_h^N )\cdot\left(\nabla\times
				(\overline{\boldsymbol{\psi}-\boldsymbol{\psi}_h})\right){\rm d}\boldsymbol{x}
				+\kappa^2\int_{\Omega}\boldsymbol{E}_h^N
					\cdot(\overline{\boldsymbol{\psi}-\boldsymbol{\psi}_h}){\rm d}\boldsymbol{x}\\
			&& +{\rm i}\kappa\int_{\Gamma_R} (\mathscr{T}^{N}\boldsymbol{E}_h^N)_{\Gamma_R}
				\cdot\overline{\left(\boldsymbol{\psi}-\boldsymbol{\psi}_h\right)_{\Gamma_R}}{\rm d}s
				+ (\boldsymbol{f}^N, \boldsymbol{\psi}-\boldsymbol{\psi}_h)\\
			&:=& J_1^1+J_1^2+J_1^3+J_1^4.
	\end{eqnarray*}
	
Denote the Birman--Solomyak decomposition of $\boldsymbol{\Psi}$ and $\boldsymbol{\Psi}_h$ by
	\[
		\boldsymbol{\psi}=\boldsymbol{\Phi}+\nabla\phi,\quad
		\boldsymbol{\psi}_h=\boldsymbol{\Phi}_h+\nabla\phi_h.
	\]
Using Green's identity and $\nabla\times\nabla\left(\phi-\phi_h\right)=0$, we get 
	\begin{eqnarray}\label{J11}
		J_1^1 &=& -\int_{\Omega} (\nabla\times\boldsymbol{E}_h^N) \cdot \left(\nabla\times
				(\overline{\boldsymbol{\psi}-\boldsymbol{\psi}_h})\right){\rm d}\boldsymbol{x}\notag\\
			&=& -\int_{\Omega} (\nabla\times\boldsymbol{E}_h^N )\cdot\left(\nabla\times
				(\overline{\boldsymbol{\Phi}-\boldsymbol{\Phi}_h})\right){\rm d}\boldsymbol{x}\notag\\	
			&=& -\sum\limits_{K\in\mathcal{M}_h}\int_{K} (\nabla\times\boldsymbol{E}_h^N) \cdot\left(\nabla\times
				(\overline{\boldsymbol{\Phi}-\boldsymbol{\Phi}_h})\right){\rm d}\boldsymbol{x}\notag\\
			&=& -\sum\limits_{K\in\mathcal{M}_h} \left[
				\int_{K}\nabla\times(\nabla\times\boldsymbol{E}_h^N )\cdot
				(\overline{\boldsymbol{\Phi}-\boldsymbol{\Phi}_h}){\rm d}\boldsymbol{x}
				+\int_{\partial K} (\nabla\times\boldsymbol{E}_h^N)\times\boldsymbol{\nu}
				\cdot (\overline{\boldsymbol{\Phi}-\boldsymbol{\Phi}_h}){\rm d}s
			\right].
	\end{eqnarray}
Similarly, we may deduce 
	\begin{eqnarray}\label{J12}
		J_1^2 &=& \kappa^2 \int_{\Omega} \boldsymbol{E}_h^N\cdot
			(\overline{\boldsymbol{\psi}-\boldsymbol{\psi}_h}){\rm d}\boldsymbol{x}\notag\\
			&=& \kappa^2 \int_{\Omega} \boldsymbol{E}_h^N\cdot
			(\overline{\boldsymbol{\Phi}-\boldsymbol{\Phi}_h}){\rm d}\boldsymbol{x}
			+\kappa^2 \int_{\Omega} \boldsymbol{E}_h^N\cdot
			(\overline{\nabla\phi-\nabla\phi_h}){\rm d}\boldsymbol{x}\notag\\
			&=&\sum\limits_{K\in\mathcal{M}_h} \left[ 
				\kappa^2\int_{K}  \boldsymbol{E}_h^N\cdot
				(\overline{\boldsymbol{\Phi}-\boldsymbol{\Phi}_h})
				-\nabla\cdot\boldsymbol{E}_h^N (\overline{\phi-\phi_h}){\rm d}\boldsymbol{x}
				+\kappa^2\int_{\partial K} \boldsymbol{E}_h^N\cdot\boldsymbol{\nu} 
				(\overline{\phi-\phi_h}){\rm d}s
			\right]
	\end{eqnarray}
and 
	\begin{eqnarray}\label{J13}
		J_1^3 &=& {\rm i}\kappa\int_{\Gamma_R} \mathscr{T}^{N}(\boldsymbol{E}_h^N)_{\Gamma_R}
				\cdot\left(\overline{\boldsymbol{\psi}}-\overline{\boldsymbol{\psi}_h}\right)_{\Gamma_R}{\rm d}s \notag\\
			&=& {\rm i}\kappa\int_{\Gamma_R} \mathscr{T}^{N}(\boldsymbol{E}_h^N)_{\Gamma_R}
				\cdot\left(\overline{\boldsymbol{\Phi}}-\overline{\boldsymbol{\Phi}_h}\right)_{\Gamma_R}{\rm d}s	
				+{\rm i}\kappa\int_{\Gamma_R} \mathscr{T}^{N}(\boldsymbol{E}_h^N)_{\Gamma_R}
				\cdot\left(\nabla\overline{\phi}-\nabla\overline{\phi_h}\right)_{\Gamma_R}{\rm d}s	\notag\\
			&=& \sum\limits_{K\in\mathcal{M}_h}\sum\limits_{F\subset \Gamma_R\cap \partial K}
			\bigg[{\rm i}\kappa \mathscr{T}^{N}(\boldsymbol{E}_h^N)_{\Gamma_R}
				\cdot(\overline{\boldsymbol{\Phi}}-\overline{\boldsymbol{\Phi}_h})_{\Gamma_R}\notag\\
			&& -{\rm i}\kappa\int_{\Gamma_R}
				{\rm div}_{\Gamma_R}\big(\mathscr{T}^{N}(\boldsymbol{E}_h^N)_{\Gamma_R}\big)
				(\overline{\phi}-\overline{\phi_h}){\rm d}s\bigg]. 
	\end{eqnarray}
A simple calculation shows that
	\begin{eqnarray}\label{J14}
	J_1^4 &=&  (\boldsymbol{f}^N, \boldsymbol{\psi}-\boldsymbol{\psi}_h) \notag\\
		    &=& \int_{\Gamma_R} \boldsymbol{f}^N
		    	\cdot(\overline{\boldsymbol{\Phi}-\boldsymbol{\Phi}_h}
		    	+\overline{\nabla\phi-\nabla\phi_h})_{\Gamma_R}{\rm d}s\notag\\
		    &=& 	\int_{\Gamma_R} \boldsymbol{f}^N
		    	\cdot\overline{(\boldsymbol{\Phi}-\boldsymbol{\Phi}_h)_{\Gamma_R}}{\rm d}s
		    	-\int_{\Gamma_R} {\rm div}_{\Gamma_R}\boldsymbol{f}^N
		    	(\overline{\phi-\phi_h}){\rm d}s.
	\end{eqnarray}
Substituting \eqref{J11}--\eqref{J14} into $J_1$ yields 
	\begin{eqnarray}\label{J1}
		J_1 &=& J_1^1+J_1^2+J_1^3+J_1^4 \notag\\
		 &=& \sum\limits_{K\in\mathcal{M}_h}\int_{K} \left[ 
		 	\kappa^2\boldsymbol{E}_h^N-\nabla\times(\nabla\times\boldsymbol{E}_h^N)
		 	\right]\cdot(\overline{\Phi}-\overline{\Phi}_h){\rm d}\boldsymbol{x}\notag\\
		 &&- \sum\limits_{K\in\mathcal{M}_h}\int_{K}
		 	\kappa^2 (\nabla\cdot\boldsymbol{E}_h^N) (\overline{\phi-\phi_h}){\rm d}\boldsymbol{x}
		 	- \sum\limits_{K\in\mathcal{M}_h}\int_{\partial K} (\nabla\times\boldsymbol{E}_h^N)\times\boldsymbol{\nu}
				\cdot (\overline{\boldsymbol{\Phi}-\boldsymbol{\Phi}_h}){\rm d}s \notag\\
		&& +	\sum\limits_{K\in\mathcal{M}_h}\kappa^2\int_{\partial K} \boldsymbol{E}_h^N\cdot\boldsymbol{\nu} 
				(\overline{\phi-\phi_h}){\rm d}s
				+ \sum\limits_{K\in\mathcal{M}_h}\sum\limits_{F\subset \Gamma_R\cap \partial K}
			\int_{F} {\rm i}\kappa \mathscr{T}^{N}(\boldsymbol{E}_h^N)_{\Gamma_R}
				\cdot(\overline{\boldsymbol{\Phi}}-\overline{\boldsymbol{\Phi}_h})_{\Gamma_R}{\rm d}s\notag\\
		&&-\sum\limits_{K\in\mathcal{M}_h}\sum\limits_{F\subset \Gamma_R\cap \partial K}
				\int_{F}{\rm i}\kappa{\rm div}_{\Gamma_R}\big(\mathscr{T}^{N}(\boldsymbol{E}^N_h)_{\Gamma_R}\big)
				(\overline{\phi}-\overline{\phi_h}){\rm d}s	\notag\\
		&&+	\sum\limits_{K\in\mathcal{M}_h}\sum\limits_{F\subset \Gamma_R\cap \partial K}
			\int_{F} \boldsymbol{f}^N
		    	\cdot\overline{(\boldsymbol{\Phi}-\boldsymbol{\Phi}_h)_{\Gamma_R}}{\rm d}s
		    	-\int_{F} {\rm div}_{\Gamma_R}\boldsymbol{f}^N
		    	(\overline{\phi-\phi_h}){\rm d}s \notag\\
		&=& \sum\limits_{K\in\mathcal{M}_h} \Bigg\{
				\int_{K} R_K^{(1)}\cdot (\overline{\boldsymbol{\Phi}}
					-\overline{\boldsymbol{\Phi}_h}){\rm d}\boldsymbol{x}
				+\int_{K} R_K^{(2)} (\overline{\phi}-\overline{\phi}_h){\rm d}\boldsymbol{x}\notag\\
		&&		+\sum\limits_{F\subset\partial K}\bigg[
					\int_{F}\frac{1}{2}J_F^{(1)}\cdot(\overline{\boldsymbol{\Phi}}
					-\overline{\boldsymbol{\Phi}_h})_{\Gamma_R}{\rm d}s
					+\int_{F}\frac{1}{2} J_F^{(2)}(\overline{\phi}-\overline{\phi}_h){\rm d}s
				\bigg]	
			\Bigg\}.	
	\end{eqnarray}
	
It follows from the Cl\'ement interpolation \eqref{interpolation1} and the Beck-Hiptmair-Hoppe-Wohlmuth interpolation \eqref{interpolation2} that we get 
\[
|J_1| \leq C\Bigg(\sum\limits_{K\in\mathcal{M}_h}\eta_K^2\Bigg)\|\boldsymbol{\psi}\|_{\boldsymbol{H}({\rm curl}, \Omega)}.
\]
On the other hand, we have from the definition of $\boldsymbol{f}$ and $\boldsymbol{f}^N$ that 
	\begin{eqnarray*}
		 \boldsymbol{f}-\boldsymbol{f}^N &=&
		 \left(\nabla\times\boldsymbol{E}^{\rm inc}\right)\times\boldsymbol{\nu}
		 	-{\rm i}\kappa\mathscr{T}\boldsymbol{E}_{\Gamma_R}^{\rm inc}
		 	-\left[\left(\nabla\times\boldsymbol{E}^{\rm inc}\right)\times\boldsymbol{\nu}
		 	-{\rm i}\kappa\mathscr{T}^N\boldsymbol{E}_{\Gamma_R}^{\rm inc}\right]\\
		 &=& -{\rm i}\kappa\left(\mathscr{T}-\mathscr{T}^N\right)\boldsymbol{E}^{\rm inc}_{\Gamma_R}.	
	\end{eqnarray*}
Using \eqref{CapacityOpe} and \eqref{TrunCapacity} yields
	\begin{eqnarray*}
		J_2 &=& (\boldsymbol{f}-\boldsymbol{f}^N, \boldsymbol{\psi})
		+{\rm i}\kappa\int_{\Gamma_R}(\mathscr{T}-\mathscr{T}^N)\boldsymbol{E}_{\Gamma_R}
		\cdot\overline{\boldsymbol{\psi}}_{\Gamma_R}{\rm d}s\\
		&=&{\rm i}\kappa\int_{\Gamma_R}(\mathscr{T}-\mathscr{T}^N)\boldsymbol{E}^s_{\Gamma_R}
		\cdot\overline{\boldsymbol{\psi}}_{\Gamma_R}{\rm d}s\\
			&=& {\rm i}\kappa\sum\limits_{n> N}\sum\limits_{|m|\leq n} \frac{{\rm i}\,\kappa R}{1+z_n^{(1)}(\kappa R)}
				E_{1n}^{sm}(R) \psi_{1n}^m(R)+\frac{1+z_n^{(1)}(\kappa R)}{{\rm i}\,\kappa R}E_{2n}^{sm}(R)\psi_{2n}^m(R)
				\\
			&=& {\rm i}\kappa\sum\limits_{n> N}\sum\limits_{|m|\leq n}
				 \frac{{\rm i}\kappa R}{1+z_n^{(1)}(\kappa R)}
				\frac{E_{1n}^{sm}(R)}{E_{1n}^{sm}(R')}E_{1n}^{sm}(R') \psi_{1n}^m(R)\notag\\
			&&	+\frac{1+z_n^{(1)}(\kappa R)}{{\rm i}\,\kappa R}
				\frac{E_{2n}^{sm}(R)}{E_{2n}^{sm}(R')}E_{2n}^{sm}(R')\psi_{2n}^m(R).
	\end{eqnarray*}
By Lemmas \ref{TraceRegularity} and \ref{ExponentialDecay}, and the asymptotic property \eqref{zn}, we have
	\begin{eqnarray*}
		|J_2| &\leq&\kappa\sum\limits_{n> N}\sum\limits_{|m|\leq n}
				\left| \frac{{\rm i}\,\kappa R}{1+z_n^{(1)}(\kappa R)}\right|
				\left|\frac{E_{1n}^m(R)}{E_{1n}^m(R')}\right|\left|E_{1n}^m(R')\right|\left| \psi_{1n}^m(R)\right|\\
&& +\left|\frac{1+z_n^{(1)}(\kappa R)}{{\rm i}\,\kappa R}\right|\left|\frac{E_{2n}^m(R)}{E_{2n}^m(R')}\right|
				\left|E_{2n}^m(R')\right|\left|\psi_{2n}^m(R)\right|\\
			&\lesssim& 	\kappa\sum\limits_{n> N}\sum\limits_{|m|\leq n}
				\frac{1}{n}\left(\frac{R'}{R}\right)^n\left|E_{1n}^m(R')\right|\left| \psi_{1n}^m(R)\right|
				+n\left(\frac{R'}{R}\right)^n\left|E_{2n}^m(R')\right|\left| \psi_{2n}^m(R)\right|\\
			&\lesssim& 	\left(\frac{R'}{R}\right)^N
				\left(\sum\limits_{n> N}\sum\limits_{|m|\leq n} 
				\frac{1}{\sqrt{1+n(n+1)}}|E_{1n}^m(R')|^2+\sqrt{1+n(n+1)}|E_{2n}^m(R')|^2\right)^{1/2}\\
			&& \times \left(\sum\limits_{n> N}\sum\limits_{|m|\leq n} 
				\frac{1}{\sqrt{1+n(n+1)}}|\psi_{1n}^m(R)|^2+\sqrt{1+n(n+1)}|\psi_{2n}^m(R)|^2\right)^{1/2}	\\
			&\lesssim &	\left(\frac{R'}{R}\right)^N \|\boldsymbol{E}\|_{TH^{-1/2}({\rm curl}, \Gamma_{R'})}
			\|\boldsymbol{\psi}\|_{TH^{-1/2}({\rm curl}, \Gamma_R)}
			\lesssim \left(\frac{R'}{R}\right)^N \|\boldsymbol{E}\|_{\boldsymbol{H}({\rm curl}, \Omega)}
			 \|\boldsymbol{\psi}\|_{\boldsymbol{H}({\rm curl}, \Omega)}.
	\end{eqnarray*}
The proof is completed by combining the estimate of $J_1$, $J_2$ and the inf-sup condition $\eqref{inf-sup}$.
\end{proof}

\begin{lemma}\label{imtbc}
For any $\boldsymbol{u}\in TH^{-1/2}({\rm curl}, \Gamma_R)$, the following estimate holds: 
\begin{equation*}
-\Im\langle \mathscr{F}^{N}\boldsymbol{u},\boldsymbol{u}\rangle_{\Gamma_R}\lesssim \delta\|\nabla\times\boldsymbol{u}\|_{\boldsymbol{L}^2(\Omega)}^2+C(\delta) \|\boldsymbol{u}\|^2_{\boldsymbol{L}^2(\Omega)},
\end{equation*}
where $\delta>0$ is a constant and $C(\delta)>0$ is also a constant depending on $\delta$. 
\end{lemma}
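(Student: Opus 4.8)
The plan is to diagonalize the Hermitian form $\boldsymbol{u}\mapsto-\Im\langle\mathscr{T}^N\boldsymbol{u},\boldsymbol{u}\rangle_{\Gamma_R}$ in the orthonormal basis $\{\boldsymbol{U}_n^m,\boldsymbol{V}_n^m\}$, to read off the sign of each diagonal entry from the asymptotics of the spherical Hankel functions, and then to invoke Lemma~\ref{stability2}. Throughout, $\boldsymbol{u}$ is identified with the trace $\boldsymbol{\phi}_{\Gamma_R}$ of some $\boldsymbol{\phi}\in\boldsymbol{H}(\mathrm{curl},\Omega)$ (in the application $\boldsymbol{\phi}=\boldsymbol{\xi}$), and the $\boldsymbol{L}^2(\Omega)$-norms on the right-hand side refer to that $\boldsymbol{\phi}$.

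First I would expand $\boldsymbol{u}=\sum_n\sum_{|m|\le n}(u_{1n}^m\boldsymbol{U}_n^m+u_{2n}^m\boldsymbol{V}_n^m)$ and use orthonormality together with \eqref{TrunCapacity} to write $\langle\mathscr{T}^N\boldsymbol{u},\boldsymbol{u}\rangle_{\Gamma_R}$ as a diagonal sum over $n\le N$ with coefficients $\mathrm{i}\kappa R/(1+z_n^{(1)}(\kappa R))$ and $(1+z_n^{(1)}(\kappa R))/(\mathrm{i}\kappa R)$. Taking imaginary parts and setting $a_n:=\Re\bigl(1+z_n^{(1)}(\kappa R)\bigr)$ one obtains
\[
-\Im\langle\mathscr{T}^N\boldsymbol{u},\boldsymbol{u}\rangle_{\Gamma_R}
=\sum_{n\le N}\sum_{|m|\le n}a_n\left(-\frac{\kappa R}{\bigl|1+z_n^{(1)}(\kappa R)\bigr|^{2}}|u_{1n}^m|^2+\frac{1}{\kappa R}|u_{2n}^m|^2\right).
\]
The crucial structural input is that $a_n<0$ for all $n\ge n_0$ with some $n_0$ depending only on $\kappa R$: the asymptotic behavior of $h_n^{(1)}$ recalled in the proof of Lemma~\ref{ExponentialDecay} yields $z_n^{(1)}(\kappa R)=-(n+1)+o(n)$, so $a_n\sim-n$, while \eqref{zn} gives $\bigl|1+z_n^{(1)}(\kappa R)\bigr|\ge C_1 n$. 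Hence for $n\ge n_0$ the $|u_{2n}^m|^2$-term is nonpositive and can be discarded, and the $|u_{1n}^m|^2$-term is bounded by $\bigl(\kappa R/|1+z_n^{(1)}(\kappa R)|\bigr)|u_{1n}^m|^2\lesssim(1+n(n+1))^{-1/2}|u_{1n}^m|^2$. For the finitely many low modes $n<n_0$ the coefficients are bounded by some $C(n_0)$, and since these modes span a finite-dimensional space the corresponding partial sum is $\lesssim C(n_0)\|\boldsymbol{u}\|^2_{TH^{-1/2}(\Gamma_R)}$. Adding the two contributions gives $-\Im\langle\mathscr{T}^N\boldsymbol{u},\boldsymbol{u}\rangle_{\Gamma_R}\lesssim\|\boldsymbol{u}\|^2_{TH^{-1/2}(\Gamma_R)}$ with a constant independent of $N$.

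It then remains to apply Lemma~\ref{stability2} to $\boldsymbol{\phi}$, which bounds $\|\boldsymbol{u}\|^2_{TH^{-1/2}(\Gamma_R)}=\|\boldsymbol{\phi}_{\Gamma_R}\|^2_{TH^{-1/2}(\Gamma_R)}$ by $\delta\|\nabla\times\boldsymbol{\phi}\|^2_{\boldsymbol{L}^2(\Omega)}+C(\delta)\|\boldsymbol{\phi}\|^2_{\boldsymbol{L}^2(\Omega)}$ and closes the proof. I expect the main obstacle to be precisely \emph{the sign of} $a_n=\Re(1+z_n^{(1)}(\kappa R))$: were it positive for infinitely many $n$, the $|u_{2n}^m|^2$-terms would carry a weight growing like $n$, which cannot be absorbed into $\delta\|\nabla\times\boldsymbol{\phi}\|^2+C(\delta)\|\boldsymbol{\phi}\|^2$; so making the Hankel-function asymptotics precise enough to pin down this sign (and, subordinately, checking that $n_0$—hence all constants—do not depend on $N$) is the heart of the argument.
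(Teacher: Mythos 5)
Your proposal is correct and follows essentially the same route as the paper: diagonalize $\Im\langle\mathscr{T}^N\boldsymbol{u},\boldsymbol{u}\rangle_{\Gamma_R}$ in the basis $\{\boldsymbol{U}_n^m,\boldsymbol{V}_n^m\}$, use the sign of $\Re\bigl(1+z_n^{(1)}(\kappa R)\bigr)$ to discard the $|u_{2n}^m|^2$-terms and bound the $|u_{1n}^m|^2$-terms by $(1+n(n+1))^{-1/2}|u_{1n}^m|^2$, and finish with Lemma~\ref{stability2}. The only (immaterial) difference is that the paper invokes the uniform bound $\Re z_n^{(1)}(\kappa R)\le -1$ for all $n$ from Monk's book, whereas you establish negativity only for $n\ge n_0$ and absorb the finitely many low modes separately; both yield an $N$-independent constant.
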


\begin{proof}
For any $\boldsymbol{u}\in TH^{-1/2}({\rm curl},\Omega)$, it has the Fourier expansion in $\Omega'$: 
\[
\boldsymbol{u}=\sum\limits_{n\in\mathbb{N}}\sum_{|m|\leq n}u_{1n}^m\boldsymbol{U}_n^m +\sum\limits_{n\in\mathbb{N}}\sum_{|m|\leq n}u_{2n}^m\boldsymbol{V}_n^m.
\] 
Taking the imaginary part of \eqref{TrunCapacity} gives
\[
\Im \langle \mathscr{T}^N\boldsymbol{u},\boldsymbol{u}\rangle_{\Gamma_R}
=\sum_{n=0}^{N}\sum_{|m|\leq n}\frac{\kappa R(1+\Re z_n^{(1)}(\kappa R))}{|1+z_n^{(1)}(\kappa R)|^2}
|u_{1n}^m|^2-\sum_{n=0}^{N}\sum_{|m|\leq n}\frac{1+\Re z_n^{(1)}(\kappa R)}{\kappa R}|u_{2n}^m|^2.
\]
It is shown in \cite[Lemma 9.2]{Monk-2003} that
\[
\Re z_n^{(1)}(\kappa R)\sim -n-1, \quad  \Re z_n^{(1)}(\kappa R)\leq -1.
\]
Hence, there exists a positive constant $C$ independent of $N$ such that
\[
\frac{1+\Re z_n^{(1)}(\kappa R)}{|1+z_n^{(1)}(\kappa R)|^2}| \geq -C\frac{1}{\sqrt{1+n(n+1)}}, 
\]
which leads to
\begin{eqnarray*}
\Im \langle \mathscr{T}^N\boldsymbol{u},\boldsymbol{u}\rangle_{\Gamma_R}
&\geq&\kappa R\sum_{n=0}^{N}\sum_{|m|\leq n}\frac{1+\Re z_n^{(1)}(\kappa R)}{|1+z_n^{(1)}(\kappa R)|^2}|u_{1n}^m|^2\\
&\geq&
-C\sum_{|m|\leq 1}\frac{1}{\sqrt{1+n(n+1)}}|u_{1n}^m|^2\geq -C\|\boldsymbol{u}\|^2_{TH^{-1/2}(\Gamma_R)}.
\end{eqnarray*}
The proof is completed by applying Lemma \ref{stability2}.
\end{proof}

To estimate the last term of \eqref{Key}, we introduce a dual problem.
Consider the Birman--Solomyak decomposition of $\boldsymbol{\xi}$: 
\[
\boldsymbol{\xi}=\nabla q+\boldsymbol{\zeta},\quad q\in H_0^1(\Omega),\quad \nabla\cdot\boldsymbol{\zeta}=0.
\]
The dual problem to \eqref{vform} is to find $\boldsymbol{W}\in \boldsymbol{H}_{\partial D}({\rm curl}, \Omega)$
such that it satisfies the variational problem
\begin{equation}\label{dualv}
a(\boldsymbol{\psi}, \boldsymbol{W})=(\boldsymbol{\psi}, \boldsymbol{\zeta})\quad \forall \boldsymbol{\psi}\in\boldsymbol{H}_{\partial D}({\rm curl}, \Omega).
\end{equation}
By the Helmholtz decomposition, it is easy to note that 
\begin{equation}\label{L2error}
	(\boldsymbol{\xi}, \boldsymbol{\xi})=(\boldsymbol{\xi}, \boldsymbol{\zeta})+(\boldsymbol{\xi}, \nabla q).
\end{equation}

\begin{lemma}\label{L22}
The following estimate holds: 
\begin{equation*}
\left|(\boldsymbol{\xi},\nabla q)\right|\lesssim\Bigg(\sum\limits_{K\in\mathcal{M}_n}\eta_K^2\Bigg)\|\nabla q\|_{\boldsymbol{L}^2(\Omega)}.
\end{equation*}
\end{lemma}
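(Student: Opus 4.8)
The plan is to first use an orthogonality identity coming from the continuous problem to reduce $(\boldsymbol\xi,\nabla q)$ to $-(\boldsymbol E_h^N,\nabla q)$, and then to bound $(\boldsymbol E_h^N,\nabla q)$ by a Galerkin/residual argument. Since $q\in H_0^1(\Omega)$ we have $q|_{\partial D}=0$ and $q|_{\Gamma_R}=0$, so $\nabla q\in\boldsymbol H_{\partial D}({\rm curl},\Omega)$ with $\nabla\times\nabla q=0$ and $(\nabla q)_{\Gamma_R}=\nabla_{\Gamma_R}(q|_{\Gamma_R})=0$. Taking $\boldsymbol\psi=\nabla q$ in \eqref{vform}, the curl term and the $\mathscr T$ term in \eqref{auv} vanish and the right-hand side vanishes, leaving $-\kappa^2(\boldsymbol E,\nabla q)=0$; hence $(\boldsymbol E,\nabla q)=0$ and $(\boldsymbol\xi,\nabla q)=-(\boldsymbol E_h^N,\nabla q)$.

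Next I would split $(\boldsymbol E_h^N,\nabla q)=(\boldsymbol E_h^N,\nabla(q-\Pi_h q))+(\boldsymbol E_h^N,\nabla\Pi_h q)$ with $\Pi_h q\in U_h$ the Cl\'ement interpolant, which vanishes on $\partial D$ so that $\nabla\Pi_h q\in\boldsymbol V_h$. For the second term, since $\nabla\Pi_h q$ is an admissible discrete test function, \eqref{vNhform} gives $a_h^N(\boldsymbol E_h^N,\nabla\Pi_h q)=\int_{\Gamma_R}\boldsymbol f^N\cdot\overline{(\nabla\Pi_h q)_{\Gamma_R}}\,{\rm d}s$; the curl term in \eqref{aNh} drops, and integrating by parts on the closed surface $\Gamma_R$ (no boundary terms) rewrites $(\boldsymbol E_h^N,\nabla\Pi_h q)$ as $\kappa^{-2}\int_{\Gamma_R}\big({\rm div}_{\Gamma_R}\boldsymbol f^N+{\rm i}\kappa\,{\rm div}_{\Gamma_R}(\mathscr T^N(\boldsymbol E_h^N)_{\Gamma_R})\big)\overline{\Pi_h q}\,{\rm d}s$. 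For the first term I would integrate by parts element by element via Green's formula for the divergence: the volume contribution is $\kappa^{-2}\int_K R_K^{(2)}\overline{(q-\Pi_h q)}$; the interior faces contribute the normal jumps $\kappa^{-2}\int_F J_F^{(2)}\overline{(q-\Pi_h q)}$; the faces on $\partial D$ vanish since $q-\Pi_h q=0$ there; and on a face $F\subset\Gamma_R$, using $q|_{\Gamma_R}=0$, the contribution is $-\int_F(\boldsymbol E_h^N\cdot\boldsymbol\nu_1)\overline{\Pi_h q}$.

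Combining the two terms, the $\Gamma_R$-contribution of the first term and the surface integral from the second term assemble — again using $q|_{\Gamma_R}=0$ to replace $\overline{\Pi_h q}$ by $-\overline{(q-\Pi_h q)}$ — into exactly the prescribed boundary residual, yielding the identity
\[
(\boldsymbol E_h^N,\nabla q)=\kappa^{-2}\sum_{K\in\mathcal M_h}\Big[\int_K R_K^{(2)}\,\overline{(q-\Pi_h q)}\,{\rm d}\boldsymbol x+\sum_{F\subset\partial K}\tfrac12\int_F J_F^{(2)}\,\overline{(q-\Pi_h q)}\,{\rm d}s\Big],
\]
the faces on $\partial D$ dropping out. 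From here the estimate is routine: bound the right-hand side by $\sum_K\big(\|R_K^{(2)}\|_{\boldsymbol L^2(K)}\|q-\Pi_h q\|_{L^2(K)}+\sum_{F\subset\partial K}\|J_F^{(2)}\|_{\boldsymbol L^2(F)}\|q-\Pi_h q\|_{L^2(F)}\big)$, insert the Cl\'ement estimates \eqref{interpolation1} to gain the factors $h_K$ and $h_F^{1/2}$, apply the discrete Cauchy--Schwarz inequality with the finite-overlap property of the patches $\tilde K,\tilde F$, and note $\sum_K\big(h_K^2\|R_K^{(2)}\|^2_{\boldsymbol L^2(K)}+h_K\sum_{F\subset\partial K}\|J_F^{(2)}\|^2_{\boldsymbol L^2(F)}\big)\le\sum_K\eta_K^2$; this gives $|(\boldsymbol\xi,\nabla q)|\lesssim\big(\sum_{K\in\mathcal M_h}\eta_K^2\big)^{1/2}\|\nabla q\|_{\boldsymbol L^2(\Omega)}$.

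The main obstacle is the bookkeeping in the third step: verifying that the surface terms produced by the element-wise divergence integration by parts on faces $F\subset\Gamma_R$ combine with those produced by the surface integration by parts of the DtN and data terms into precisely the face residual $J_F^{(2)}$ defined on $\Gamma_R\cap\mathcal F_h$. This hinges entirely on $q\in H_0^1(\Omega)$ vanishing on the whole of $\partial\Omega=\partial D\cup\Gamma_R$, which both eliminates the $\partial D$-face terms and allows the trade $\overline{\Pi_h q}=-\overline{(q-\Pi_h q)}$ on $\Gamma_R$. The remaining ingredients — the identification $\nabla\cdot\boldsymbol E_h^N|_K=-\kappa^{-2}R_K^{(2)}$, the inclusion $\nabla U_h\subset\boldsymbol V_h$ ensuring $\nabla\Pi_h q$ is an admissible test function, and the interpolation and overlap bounds — are standard.
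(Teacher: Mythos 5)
Your argument is correct and follows essentially the same route as the paper's: orthogonality of $\boldsymbol{\xi}$ to the discrete gradient directions $\nabla(U_h\cap H_0^1(\Omega))$ (equivalently, $(\boldsymbol{E},\nabla q)=0$ plus the discrete equation tested with $\nabla\Pi_h q$), element-wise integration by parts producing the divergence residuals $R_K^{(2)}$ and $J_F^{(2)}$, and the Cl\'ement estimates \eqref{interpolation1}; the only organizational difference is that you allow $\Pi_h q$ to be nonzero on $\Gamma_R$ and therefore carry the boundary face residuals explicitly, whereas the paper takes $\Pi_h q\in H_0^1(\Omega)$ so that those terms drop out. Note also that the bound you obtain, with $\big(\sum_{K}\eta_K^2\big)^{1/2}$, is the intended one: the exponent $1$ appearing in the lemma's statement and at the end of the paper's proof is a typo, as the subsequent use of the lemma in \eqref{L2bounds} confirms.
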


\begin{proof}
	For any $q_h\in U_h\cap H_0^1(\Omega)$, it is easy to check that $\nabla q_h\in \boldsymbol{V}_h$. Then we have
	\begin{eqnarray*}
		-\kappa^2\int_{\Omega}\boldsymbol{\xi}\cdot \nabla q_h {\rm d}\boldsymbol{x}
		&=& a(\boldsymbol{E}, \nabla q_h)-a_h^N(\boldsymbol{E}_h^N, \nabla q_h)\\
		&=& \int_{\Gamma_R} (\boldsymbol{f}-\boldsymbol{f}^N)\cdot(\overline{\nabla q_h})_{\Gamma_R}{\rm d}s \\
		&=& -\int_{\Gamma_R} {\rm div}_{\Gamma_R} (\boldsymbol{f}-\boldsymbol{f}^N) q_h{\rm d}s=0.
	\end{eqnarray*}
Taking $q_h=\Pi_h q$ and using the integration by parts, we get
	\begin{eqnarray*}
		\left|(\boldsymbol{\xi},\nabla q)\right| &=&\left|(\boldsymbol{\xi},\nabla( q-q_h))\right|\\
		&=& \left|\sum\limits_{K\in\mathcal{M}_h}\int_{K} \nabla\cdot \boldsymbol{E}_h^N (\overline{q-q_h}){\rm d}\boldsymbol{x}
		-\sum\limits_{K\in\mathcal{M}_h}\int_{\partial K} \boldsymbol{E}_h^N\cdot\boldsymbol{\nu} (\overline{q-q_h}){\rm d}s \right|\\
		&\lesssim& \left(
			\sum\limits_{K\in\mathcal{M}_n}\eta_K^2\right)\|\nabla q\|_{\boldsymbol{L}^2(\Omega)},
	\end{eqnarray*}
which completes the proof.	
\end{proof}

It remains to estimate the first term of \eqref{L2error}. It can be verified that the solution of dual problem \eqref{dualv}
satisfies the boundary value problem
\begin{equation}\label{W}
\begin{cases}
\nabla\times(\nabla\times \boldsymbol{W})-\kappa^2\boldsymbol{W}=\boldsymbol{\zeta}\quad &{\rm in}\,\Omega,\\
\boldsymbol{\nu}\times\boldsymbol{W}=0 & {\rm on}\,\partial D,\\
(\nabla\times\boldsymbol{W})\times\boldsymbol{\nu}=-{\rm i}\kappa\mathscr{T}^{*}\boldsymbol{W}_{\Gamma_R}
& {\rm on}\,\Gamma_R, 
\end{cases}
\end{equation}
where $\mathscr T^*$ is the adjoint operator to $\mathscr T$. Let the solution of \eqref{W} admit the Fourier expansion in $B_{R}\setminus \overline{B_{R'}}$: 
\begin{equation}\label{Wdual}
	\boldsymbol{W}=\sum\limits_{n\in\mathbb{N}}\sum\limits_{m=-n}^n
		w_{1n}^m(\rho) \boldsymbol{U}_n^m+w_{2n}^m(\rho) \boldsymbol{V}_n^m
		+w_{3n}^m(\rho) X_n^m \boldsymbol{e}_{\rho}.
\end{equation}

In Lemmas \ref{w2n} and \ref{w3n}, we present the ODE systems for the Fourier coefficients  $w_{2n}^m$ and $w_{3n}^m$. Lemma \ref{3Dsurface_odes} and \ref{3Dsurface_w1nw2nR} concern the solutions to the ODE systems. Finally, we deduce the estimate \eqref{L2error} in Lemma \ref{Lemma59}. 

\begin{lemma}\label{w2n}
If $\boldsymbol{W}$ satisfies \eqref{W}, then the coefficients $w_{2n}^{m}$ of \eqref{Wdual} satisfy the following ODE system:
\begin{equation*}
\begin{cases}
w_{2n}^{m''}(\rho)+\frac{2}{\rho}w_{2n}^{m'}(\rho)+\big(\kappa^2-\frac{n(n+1)}{\rho^2}\big)w_{2n}^m(\rho)
=-\zeta_{2n}^{m}(\rho),\quad &\rho\in(R', R),\\
w_{2n}^{m'}(R)-\frac{z_n^{(2)}(\kappa R)}{R} w_{2n}^m(R)=0, \quad &\rho=R,\\
w_{2n}^{m}(R')=w_{2n}^{m}(R'), \quad & \rho=R'. 
\end{cases}
\end{equation*}
\end{lemma}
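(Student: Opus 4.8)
The plan is to derive the ODE system for $w_{2n}^m$ directly from the boundary value problem \eqref{W} by substituting the Fourier expansion \eqref{Wdual} and using the known action of the curl-curl operator on the vector spherical harmonics $\boldsymbol U_n^m$, $\boldsymbol V_n^m$, $X_n^m\boldsymbol e_\rho$ (the identity \eqref{curlUVX}, analogous to \eqref{curlphi}). The interior equation $\nabla\times(\nabla\times\boldsymbol W)-\kappa^2\boldsymbol W=\boldsymbol\zeta$ holds in $\Omega$, hence in particular in $\Omega'=B_R\setminus\overline{B_{R'}}$; expanding $\boldsymbol\zeta$ in the same basis with coefficients $\zeta_{jn}^m(\rho)$ and matching the $\boldsymbol V_n^m$-component yields the radial ODE. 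First I would apply $\nabla\times$ to $\boldsymbol W$ using \eqref{curlphi}, then apply $\nabla\times$ again; the key point is that the $\boldsymbol V_n^m$-component of $\nabla\times(\nabla\times\boldsymbol W)$ decouples from $w_{1n}^m$ and $w_{3n}^m$ and involves only $w_{2n}^m$, producing $-w_{2n}^{m''}-\frac{2}{\rho}w_{2n}^{m'}+\frac{n(n+1)}{\rho^2}w_{2n}^m$. Subtracting $\kappa^2 w_{2n}^m$ and equating to $\zeta_{2n}^m$ gives the stated interior equation after a sign normalization.

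For the boundary condition at $\rho=R$, I would use the TBC $(\nabla\times\boldsymbol W)\times\boldsymbol\nu=-{\rm i}\kappa\mathscr T^*\boldsymbol W_{\Gamma_R}$ together with the explicit formula for $\mathscr T$ in \eqref{CapacityOpe}. Taking the $\boldsymbol V_n^m$-component of both sides: the left-hand side, via \eqref{curlphi}, contributes a term proportional to $w_{2n}^{m'}(R)$ (and a lower-order $\frac{1}{R}w_{2n}^m(R)$ term from the $\frac{\rm d}{{\rm d}\rho}(\rho w_{2n}^m)$ structure), while the right-hand side, from the $\boldsymbol V_n^m$-multiplier $\frac{1+z_n^{(1)}(\kappa R)}{{\rm i}\kappa R}$ of $\mathscr T$ (and hence its adjoint, which on this diagonal basis is the complex conjugate), contributes a multiple of $w_{2n}^m(R)$. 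Collecting terms and dividing through gives $w_{2n}^{m'}(R)-\frac{z_n^{(2)}(\kappa R)}{R}w_{2n}^m(R)=0$, where $z_n^{(2)}$ is the analogous logarithmic-derivative quantity built from $h_n^{(2)}$, reflecting that the adjoint TBC corresponds to the Hankel function of the second kind (incoming rather than outgoing radiation). At $\rho=R'$ the "condition" is simply the tautology matching the interior solution to its value at $R'$, i.e.\ continuity of $w_{2n}^m$ across $\Gamma_{R'}$, which requires no argument.

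The main obstacle will be bookkeeping the adjoint operator $\mathscr T^*$ correctly and identifying the precise coefficient $z_n^{(2)}(\kappa R)$ in the Robin-type boundary term: one must verify that taking the adjoint of the diagonal operator \eqref{CapacityOpe} on the orthonormal basis $\{\boldsymbol U_n^m,\boldsymbol V_n^m\}$ amounts to conjugating the scalar multipliers, and then relate the conjugate of $\frac{1+z_n^{(1)}(\kappa R)}{{\rm i}\kappa R}$ (after combining with the curl boundary term) to $\frac{z_n^{(2)}(\kappa R)}{R}$ using the Wronskian-type relations between $h_n^{(1)}$ and $h_n^{(2)}$ and their derivatives. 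The remaining steps—the double-curl computation and matching Fourier components—are routine given \eqref{curlphi}, since the system is diagonal in $(n,m)$ and the $\boldsymbol V_n^m$ channel is self-contained.
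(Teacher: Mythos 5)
Your proposal is correct and follows essentially the same route as the paper: match the $\boldsymbol V_n^m$-component of $\nabla\times(\nabla\times\boldsymbol W)-\kappa^2\boldsymbol W=\boldsymbol\zeta$ using the curl--curl identity \eqref{curlcurlUVX} (the $\boldsymbol V_n^m$ channel indeed decouples), and obtain the Robin condition at $\rho=R$ by taking the $\boldsymbol V_n^m$-component of the adjoint TBC via \eqref{3Dsurface_curlfe}, with the adjoint of the diagonal Calder\'on operator given by conjugating the multipliers so that $z_n^{(1)}$ becomes $z_n^{(2)}$ (since $h_n^{(2)}=\overline{h_n^{(1)}}$ on the real axis — no Wronskian identity is actually needed). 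The paper carries out exactly these two computations.
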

 
\begin{proof}
It follows from \eqref{curlcurlUVX} that
\begin{align*}
\left[-\frac{1}{\rho}\frac{\partial^2}{\partial \rho^2}\left(\rho w_{2n}^m(\rho)\right)
+\frac{n(n+1)}{\rho^2} w_{2n}^m(\rho)-\kappa^2 w_{2n}^m(\rho)\right]V_n^m
=\zeta_{2n}^{m}(\rho) V_n^m,
\end{align*}
which gives 
\begin{equation}\label{3Dsurface_ode2}
w_{2n}^{m''}(\rho)+\frac{2}{\rho}w_{2n}^{m'}(\rho)+\Big(\kappa^2-\frac{n(n+1)}{\rho^2}\Big)w_{2n}^m(\rho)
=-\zeta_{2n}^{m}(\rho). 
\end{equation}
Substituting \eqref{3Dsurface_curlfe} into \eqref{CapacityOpe} leads to 
\begin{align*}
\frac{1}{\rho}\frac{\partial}{\partial\rho}\left(\rho w_{2n}^{m}(\rho)\right)\Bigg|_{\rho=R} V_n^m
=(-{\rm i}\kappa)\, {\rm i}\,\frac{1+z_n^{(2)}(\kappa R)}{\kappa R} w_{2n}^m(R) V_n^m,
\end{align*}
which shows 
\begin{equation}\label{3Dsurface_boundary2}
w_{2n}^{m'}(R)-\frac{z_n^{(2)}(\kappa R)}{R} w_{2n}^m(R)=0. 
\end{equation}
The proof is completed by combining \eqref{3Dsurface_ode2} and \eqref{3Dsurface_boundary2}. 
\end{proof} 

\begin{lemma}\label{w3n}
Let $v_{n}^m(\rho)=\rho w_{3n}^{m}(\rho)+C_n^m(R)$, where 
\[
C_n^m(R)=\frac{1+z_{n}^{(2)}(\kappa R)}{z_{n}^{(2)}(\kappa R)} \frac{R}{\kappa^2}\zeta_{3n}^m(R),
\] 
then $v_{n}^{m}$ satisfies the following ODE system:
\begin{equation*}
\begin{cases}
v_{n}^{m''}(\rho)+\frac{2}{\rho}v_{n}^{m'}(\rho)+\big(\kappa^2-\frac{n(n+1)}{\rho^2}\big)v_{n}^m(\rho)
=-\beta_{n}^{m}(\rho),\quad &\rho\in(R', R),\\
v_{n}^{m'}(R)-\frac{z_n^{(2)}(\kappa R)}{R} v_{n}^m(R)=0, \quad &\rho=R,\\
v_{n}^{m}(R')=v_{n}^{m}(R'), \quad &\rho=R',
\end{cases}
\end{equation*}
where
\[
\beta_n^{m}(\rho)=\rho\zeta_{3n}^m(\rho)-\Big(\kappa^2-\frac{n(n+1)}{\rho^2}\Big)C_n^m(R).
\]
\end{lemma}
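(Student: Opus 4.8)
The strategy mirrors the proof of Lemma~\ref{w2n}, except that one must now extract the radial ($X_n^m\boldsymbol{e}_\rho$) component of the equations in \eqref{W}. Two new features appear: this component couples $w_{3n}^m$ to $w_{1n}^m$, so a solenoidal constraint is needed to decouple it; and the boundary relation it produces at $\rho=R$ is inhomogeneous in $\zeta_{3n}^m(R)$, which is exactly the reason for passing to the shifted unknown $v_n^m=\rho w_{3n}^m+C_n^m(R)$.

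First I would observe that $\nabla\cdot\boldsymbol{W}=0$ in $\Omega$: taking the divergence of $\nabla\times(\nabla\times\boldsymbol{W})-\kappa^2\boldsymbol{W}=\boldsymbol{\zeta}$ and using $\nabla\cdot\boldsymbol{\zeta}=0$ gives $-\kappa^2\nabla\cdot\boldsymbol{W}=0$, which for the expansion \eqref{Wdual} reads $\sqrt{n(n+1)}\,w_{1n}^m(\rho)=\rho^{-1}\frac{\rm d}{{\rm d}\rho}\big(\rho^2w_{3n}^m(\rho)\big)$. Applying the curl--curl identity \eqref{curlcurlUVX} to \eqref{Wdual} and collecting the coefficient of $X_n^m\boldsymbol{e}_\rho$ in the first equation of \eqref{W} yields
\[
\frac{n(n+1)}{\rho^2}w_{3n}^m-\frac{\sqrt{n(n+1)}}{\rho^2}\frac{\rm d}{{\rm d}\rho}\big(\rho w_{1n}^m\big)-\kappa^2w_{3n}^m=\zeta_{3n}^m .
\]
Eliminating $w_{1n}^m$ with the solenoidal relation and setting $u:=\rho^2w_{3n}^m$ collapses this to $u''+\big(\kappa^2-\tfrac{n(n+1)}{\rho^2}\big)u=-\rho^2\zeta_{3n}^m$. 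Because $C_n^m(R)$ is independent of $\rho$, from $v_n^m=\rho w_{3n}^m+C_n^m(R)$ we get $u=\rho\big(v_n^m-C_n^m(R)\big)$ and $u''=\rho v_n^m{}''+2v_n^m{}'$; dividing by $\rho$ produces exactly the interior ODE $v_n^m{}''+\tfrac{2}{\rho}v_n^m{}'+\big(\kappa^2-\tfrac{n(n+1)}{\rho^2}\big)v_n^m=-\beta_n^m$ with the stated $\beta_n^m$.

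The relation at $\rho=R'$ is the trivial identity $v_n^m(R')=v_n^m(R')$, which only records that the trace of $\boldsymbol{W}$ on $\Gamma_{R'}$ is a prescribed datum. For the boundary condition at $\rho=R$ I would read off the $\boldsymbol{U}_n^m$ component of $(\nabla\times\boldsymbol{W})\times\boldsymbol{\nu}=-{\rm i}\kappa\mathscr{T}^{*}\boldsymbol{W}_{\Gamma_R}$: since $\overline{z_n^{(1)}(\kappa R)}=z_n^{(2)}(\kappa R)$, the symbol of $\mathscr{T}^{*}$ on $\boldsymbol{U}_n^m$ is $-{\rm i}\kappa R/(1+z_n^{(2)}(\kappa R))$, and with \eqref{curlphi} this gives
\[
\sqrt{n(n+1)}\,w_{3n}^m(R)-\frac{\rm d}{{\rm d}\rho}\big(\rho w_{1n}^m\big)\Big|_{\rho=R}=\frac{\kappa^2R^2}{1+z_n^{(2)}(\kappa R)}\,w_{1n}^m(R).
\]
Inserting the solenoidal relation at $\rho=R$ for $w_{1n}^m(R)$ and for $\frac{\rm d}{{\rm d}\rho}(\rho w_{1n}^m)|_{\rho=R}$ (the latter introducing $w_{3n}^m{}''(R)$), then using the interior equation for $u$ at $\rho=R$ to remove $w_{3n}^m{}''(R)$, the relation simplifies to $Rw_{3n}^m{}'(R)+\big(1-z_n^{(2)}(\kappa R)\big)w_{3n}^m(R)=\kappa^{-2}\big(1+z_n^{(2)}(\kappa R)\big)\zeta_{3n}^m(R)$. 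Rewriting this in terms of $v_n^m$, the right-hand side is annihilated precisely by $C_n^m(R)=\frac{1+z_n^{(2)}(\kappa R)}{z_n^{(2)}(\kappa R)}\frac{R}{\kappa^2}\zeta_{3n}^m(R)$, leaving $v_n^m{}'(R)-\frac{z_n^{(2)}(\kappa R)}{R}v_n^m(R)=0$. I expect this last reduction to be the main obstacle: one must carry the conjugated symbol of $\mathscr{T}^{*}$ and the sign conventions for $\boldsymbol{U}_n^m\times\boldsymbol{e}_\rho$ and $\boldsymbol{V}_n^m\times\boldsymbol{e}_\rho$ consistently with \eqref{curlphi}, and verify that after substituting the solenoidal relation and the interior ODE the coefficient of $\zeta_{3n}^m(R)$ comes out exactly $\kappa^{-2}(1+z_n^{(2)}(\kappa R))$, which is what forces the stated choice of $C_n^m(R)$.
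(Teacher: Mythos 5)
Your proposal is correct and follows essentially the same route as the paper's proof: extract the $X_n^m\boldsymbol{e}_\rho$ component of the dual equation, eliminate $w_{1n}^m$ via the solenoidal relation, reduce to a spherical Bessel equation, derive the Robin relation at $\rho=R$ from the $\boldsymbol{U}_n^m$ component of the adjoint TBC, and homogenize it by the shift $C_n^m(R)$. Your auxiliary unknown $\rho^2 w_{3n}^m$ is just a harmless reparametrization of the paper's $u_n^m=\rho w_{3n}^m$, and your explicit verification that $\nabla\cdot\boldsymbol{W}=0$ (which the paper leaves implicit) is a welcome addition.
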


\begin{proof}
	Letting $u_n^m(\rho)=\rho w_{3n}^{m}(\rho)$, we have from {\color{red}\eqref{ElectroFields}} and \eqref{curlcurlUVX} that 
	\begin{eqnarray}
	\left[-\frac{\sqrt{n(n+1)}}{\rho^2}\frac{\partial}{\partial\rho}\left(\rho w_{1n}^m(\rho)\right)
	+\frac{n(n+1)}{\rho^2} w_{3n}^m(\rho)-\kappa^2 w_{3n}^m(\rho)\right]X_n^m \boldsymbol e_{\rho}
	=\zeta_{3n}^{m}(\rho) X_n^m \boldsymbol e_{\rho}. \label{3Dsurface_ode3}
	\end{eqnarray}
Using Lemma \ref{3Dsurface_divfree} and eliminating $w_{1n}^m$ from \eqref{3Dsurface_ode3} yield 
\begin{equation}\label{diffu}
u_{n}^{m''}(\rho)+\frac{2}{\rho}u_n^{m'}(\rho)+\Big(\kappa^2-\frac{n(n+1)}{\rho^2}\Big)u_n^m(\rho)
= -\rho\zeta_{3n}^{m}(\rho).
\end{equation}
Noting $v_n^m(\rho)=u_n^m(\rho)+C_n^m(R)$, we get
\begin{equation*}\label{3Dsurface_v1}
v_{n}^{m''}(\rho)+\frac{2}{\rho}v_n^{m'}(\rho)+\Big(\kappa^2-\frac{n(n+1)}{\rho^2}\Big)v_n^m(\rho)= -\rho\zeta_{3n}^{m}(\rho)+\Big(\kappa^2-\frac{n(n+1)}{\rho^2}\Big) C_n^m(R).
\end{equation*}

It follows from \eqref{CapacityOpe} and \eqref{3Dsurface_curlfe} that
\begin{align*}
\left[\frac{1}{\rho}\frac{\partial}{\partial\rho}\left(\rho w_{1n}^m(\rho)\right)\Bigg|_{\rho=R}-\frac{\sqrt{n(n+1)}}{\rho}w_{3n}^m(\rho)\Bigg|_{\rho=R}\right]=(-{\rm i}\kappa)\frac{-{\rm i}\kappa R}{1+z_n^{(2)}(\kappa R)} w_{1n}^m(R),
\end{align*}
which gives
\begin{equation}
w_{1n}^{m'}(R)+\frac{1+z_n^{(2)}(\kappa R)+\kappa^2 R^2}{1+z_n^{(2)}(\kappa R)} \frac{1}{R}w_{1n}^{m}(R)
=\frac{\sqrt{n(n+1)}}{R} w_{3n}^{m}(R).	\label{3Dsurface_boundary1}
\end{equation}
Eliminating $w_{1n}^{m}(R)$ from \eqref{3Dsurface_boundary1} by using Lemma \ref{3Dsurface_divfree}, we obtain 
\begin{align*}
-\frac{1}{R}w_{1n}^{m}(R)&+\frac{1}{\sqrt{n(n+1)}}\frac{1}{R}\frac{\partial^2}{\partial \rho^2}\left(\rho u_n^m(\rho)\right)\Big|_{\rho=R}\\
& +\frac{1+z_n^{(2)}(\kappa R)+\kappa^2 R^2}{1+z_n^{(2)}(\kappa R)} \frac{1}{R}w_{1n}^{m}(R)
=\frac{\sqrt{n(n+1)}}{R} w_{3n}^{m}(R).
\end{align*}	
A simple calculation yields 
\begin{eqnarray*}
\frac{1}{\sqrt{n(n+1)}}\frac{\partial^2}{\partial \rho^2}\left(\rho u_n^m(\rho)\right)\Big|_{\rho=R}+\frac{\kappa^2 R}{1+z_n^{(2)}(\kappa R)}Rw_{1n}^{m}(R)=\sqrt{n(n+1)} w_{3n}^m(R),
\end{eqnarray*}
which gives
\begin{align*}
\frac{1}{\sqrt{n(n+1)}}\frac{\partial^2}{\partial \rho^2}\left(\rho u_n^m(\rho)\right)\Big|_{\rho=R}
+\frac{\kappa^2 R}{1+z_n^{(2)}(\kappa R)}\frac{1}{\sqrt{n(n+1)}}\frac{\partial}{\partial\rho}\left(\rho u_n^m(\rho)\right)\Big|_{\rho=R}\\
=\sqrt{n(n+1)} w_{3n}^m(R).
\end{align*}
Substituting \eqref{diffu} into the above equation leads to 
\begin{align*}
&\frac{1}{\sqrt{n(n+1)}}\left[R\left(\frac{n(n+1)}{R^2}-\kappa^2 \right)u_n^m(R)-R^2\zeta_{3n}^{m}(R)\right]\\
&\quad +\frac{\kappa^2 R}{1+z_n^{(2)}(\kappa R)}\frac{1}{\sqrt{n(n+1)}}\left[u_n^{m}(R)+Ru_n^{m'}(R)\right]
=\sqrt{n(n+1)}\frac{1}{R} u_{n}^m(R).
\end{align*}
It is easy to verify
\begin{eqnarray*}
\frac{\kappa^2 R^2}{1+z_n^{(2)}(\kappa R)} u_n^{m'}(R)+\left[\frac{\kappa^2 R}{1+z_n^{(2)}(\kappa R)}-\kappa^2 R\right] u_n^m(R)
=R^2 \zeta_{3n}^m(R), 
\end{eqnarray*}
which yields 
\begin{eqnarray*}
u_n^{m'}(R)-\frac{z_n^{(2)}(\kappa R)}{R} u_n^{m}(R)=\frac{1+z_n^{(2)}(\kappa R)}{\kappa^2}\zeta_{3n}^{m}(R).
\end{eqnarray*}
Noting $v_n^m(\rho)=u_n^m(\rho)+C_n^m(R)$ again, we get 
\begin{eqnarray*}
v_{n}^{m'}(R)-\frac{z_n^{(2)}(\kappa R)}{R} v_n^m(R)=0,
\end{eqnarray*}
which completes the proof. 
\end{proof}

Once $v_n^m$ is solved, $w_{1n}^m$ can be computed directly from Lemma \ref{3Dsurface_divfree} as follows: 
\begin{eqnarray}\label{3Dsurface_w1n}
	w_{1n}^{m}(\rho) &=& \frac{1}{\sqrt{n(n+1)}}\frac{1}{\rho}\frac{\partial}{\partial\rho}\left(\rho^2 w_{3n}^m(\rho)\right)\notag\\
	&=& \frac{1}{\sqrt{n(n+1)}}\frac{1}{\rho}\frac{\partial}{\partial\rho}\left(\rho\left(v_n^m(\rho)-C_n^m(R)\right)\right)\notag\\
	&=&  \frac{1}{\sqrt{n(n+1)}}\frac{1}{\rho} \left[v_n^m(\rho)+\rho v_{n}^{m'}(\rho)-C_n^m(R)\right].
\end{eqnarray}
Moreover, evaluating \eqref{3Dsurface_w1n} at $\rho=R$ yields 
\begin{eqnarray}\label{3Dsurface_w1nR}
	w_{1n}^{m}(R) &=& \frac{1}{\sqrt{n(n+1)}}\left[
		\frac{1}{R}v_n^m(R)+v_n^{m'}(R)-\frac{1}{R}C_n^m(R)
	\right]\notag\\
	&=&\frac{1}{\sqrt{n(n+1)}}\left[
		\frac{1}{R}v_n^m(R)+\frac{z_n^{(2)}(\kappa R)}{R} v_n^m(R)-\frac{1}{R}C_n^m(R)
	\right]\notag\\
	&=&\frac{1}{\sqrt{n(n+1)}}\frac{1}{R}\left[1+z_n^{(2)}(\kappa R)\right]v_n^m(R)
	-\frac{1}{\sqrt{n(n+1)}}\frac{1}{R} C_n^{m}(R).
\end{eqnarray}

The following results are concerned with the solutions to the ODE systems in Lemmas \ref{w2n} and \ref{w3n}. The proof can be found in \cite{BZHL-2020-DCDSS}. 

\begin{lemma}\label{3Dsurface_odes}
Let $v(\rho)$ satisfy the ODE system
\begin{equation*}
\begin{cases}
v''(\rho)+\frac{2}{\rho}v'(\rho)+\Big(\kappa^2-\frac{n(n+1)}{\rho^2}\Big)v(\rho)
=-\xi(\rho),\quad &\rho\in(R', R),\\
v'(R)-\frac{z_n^{(2)}(\kappa R)}{R} v(R)=0, \quad & \rho=R,\\
v(R')=v(R'),\quad &\rho=R',
\end{cases}
\end{equation*}
which has a unique solution given by 
\[
v(\rho)=S_n(\rho)v(R')+\frac{{\rm i}\kappa}{2}\int_{R'}^{\rho} t^2 W_n(\rho, t)\xi(t){\rm d}t
+\frac{{\rm i}\kappa}{2}\int_{R'}^{R} t^2 S_n(t)W_n(R', \rho)\xi(t){\rm d}t,
\]
where
\[
S_n(\rho)=\frac{h_n^{(2)}(\kappa\rho)}{h_n^{(2)}(\kappa R')},\quad
W_n(\rho, t)={\rm det}\begin{bmatrix}
h_n^{(1)}(\kappa\rho)	& h_n^{(2)}(\kappa\rho)\\
h_n^{(1)}(\kappa t)	& h_n^{(2)}(\kappa t)
\end{bmatrix}.
\]
Taking $\rho=R$ in the solution yields 
\[
v(R)=S_n(R) v(R')+\frac{{\rm i}\kappa}{2}\int_{R'}^{R} t^2 S_n(R)W_n(R', t)\xi(t){\rm d}t.
\]
Moreover, it follows from the asymptotic property of $h_n^{(2)}(t)$ that
\[
|S_n(R)|\lesssim \left(\frac{R'}{R}\right)^n, \quad
|W_n(R', t)|\lesssim \frac{1}{n} \left(\frac{t}{R'}\right)^n,
\quad n\rightarrow\infty.
\]
\end{lemma}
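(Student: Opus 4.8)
The statement to prove is Lemma \ref{3Dsurface_odes}, which solves the two-point boundary value problem for a spherical Bessel-type ODE and provides growth estimates for the resulting kernels.

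\textbf{Plan.} The plan is to solve the ODE by the method of variation of parameters, using the two spherical Hankel functions $h_n^{(1)}(\kappa\rho)$ and $h_n^{(2)}(\kappa\rho)$ as a fundamental system for the homogeneous equation
\[
v''(\rho)+\frac{2}{\rho}v'(\rho)+\Big(\kappa^2-\frac{n(n+1)}{\rho^2}\Big)v(\rho)=0,
\]
since $j_n(\kappa\rho)=\tfrac12(h_n^{(1)}+h_n^{(2)})(\kappa\rho)$ and $y_n(\kappa\rho)$ solve the spherical Bessel equation in the form above. First I would record the Wronskian identity for spherical Hankel functions, $h_n^{(1)}(z)h_n^{(2)\prime}(z)-h_n^{(1)\prime}(z)h_n^{(2)}(z)=-2\mathrm{i}/z^2$, which (after the change of variable $z=\kappa t$ and multiplication by the weight $t^2$) makes the variation-of-parameters denominators collapse to the constant $-2\mathrm{i}/\kappa$, producing the prefactor $\mathrm{i}\kappa/2$ in the claimed formula. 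The kernel $W_n(\rho,t)$ is exactly the $2\times2$ determinant of Hankel values at $\rho$ and $t$, i.e. the "bilinear" combination of the two solutions that vanishes at $t=\rho$.

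\textbf{Key steps.} (i) Write the general solution as $v(\rho)=A\,h_n^{(1)}(\kappa\rho)+B\,h_n^{(2)}(\kappa\rho)+v_p(\rho)$, where $v_p$ is the particular solution built from the Green's function $G_n(\rho,t)=\tfrac{\mathrm{i}\kappa}{2}t^2 W_n(\rho,t)$ for the forcing $-\xi$; one checks directly that $v_p$ defined by the first integral $\tfrac{\mathrm{i}\kappa}{2}\int_{R'}^{\rho} t^2 W_n(\rho,t)\xi(t)\,\mathrm dt$ satisfies the ODE and the initial conditions $v_p(R')=0$, $v_p'(R')=0$, because $W_n(\rho,\rho)=0$ and $\partial_\rho W_n(\rho,t)|_{t=\rho}$ combines with the Wronskian to give the correct jump. (ii) Impose the boundary condition at $\rho=R$: the combination $S_n(\rho)=h_n^{(2)}(\kappa\rho)/h_n^{(2)}(\kappa R')$ is precisely the homogeneous solution normalized so that $S_n(R')=1$ and so that it satisfies $S_n'(R)-\tfrac{z_n^{(2)}(\kappa R)}{R}S_n(R)=0$ — indeed $z_n^{(2)}(z)=z h_n^{(2)\prime}(z)/h_n^{(2)}(z)$ is exactly the logarithmic-derivative quantity that appears in the Robin condition. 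So the homogeneous part that carries the data is $S_n(\rho)v(R')$. (iii) Determine the remaining constant by forcing the full solution to meet the Robin condition at $R$; the residual term is $\tfrac{\mathrm i\kappa}{2}\int_{R'}^R t^2 S_n(t) W_n(R',\rho)\xi(t)\,\mathrm dt$, which uses $W_n(R',\rho)=-W_n(\rho,R')$ and the fact that $W_n(\cdot,\rho)$ as a function of its first argument is the homogeneous solution vanishing at $\rho$; matching at $R$ and using the Robin condition on $S_n$ produces the stated coefficient. (iv) Set $\rho=R$: in $\tfrac{\mathrm i\kappa}{2}\int_{R'}^{\rho}t^2 W_n(\rho,t)\xi\,\mathrm dt$ replace $\rho$ by $R$, and in the residual term $W_n(R',\rho)$ becomes $W_n(R',R)=-W_n(R,R')$; combining with $S_n(t)$ and using $W_n(R,t)+$ (the other piece) $=S_n(R)W_n(R',t)$ up to the normalization collapses the two integrals into the single formula $v(R)=S_n(R)v(R')+\tfrac{\mathrm i\kappa}{2}\int_{R'}^R t^2 S_n(R)W_n(R',t)\xi(t)\,\mathrm dt$. (v) Finally, for the asymptotics, insert the large-order behavior $h_n^{(1)}(z)\sim -\mathrm i\,(2n-1)!!/z^{n+1}$ and $h_n^{(2)}(z)\sim \mathrm i\,(2n-1)!!/z^{n+1}$ (equivalently $h_n^{(j)}(t)\sim(-1)^j\mathrm i(2n-1)!!/t^{n+1}$, already quoted in the excerpt): then $S_n(R)=h_n^{(2)}(\kappa R)/h_n^{(2)}(\kappa R')\sim (R'/R)^{n+1}\lesssim(R'/R)^n$, and in $W_n(R',t)=h_n^{(1)}(\kappa R')h_n^{(2)}(\kappa t)-h_n^{(1)}(\kappa t)h_n^{(2)}(\kappa R')$ the leading singular parts cancel, so one must keep the next-order term in the asymptotic expansion of $h_n^{(j)}$; carrying this out gives a factor $1/n$ and the geometric factor $(t/R')^n$, i.e. $|W_n(R',t)|\lesssim \tfrac1n (t/R')^n$.

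\textbf{Main obstacle.} The routine parts — variation of parameters, the Wronskian computation, the algebra matching the Robin condition — are standard. The genuinely delicate point is the sharp asymptotics of $W_n(R',t)$: since $W_n$ is a difference of two products each of size $\sim (2n-1)!!^2/(\kappa^2 R' t)^{n+1}$, the leading terms cancel identically and the bound $\tfrac1n(t/R')^n$ only emerges from the first-order correction terms in the asymptotic expansions of $h_n^{(1)}$ and $h_n^{(2)}$ (or, cleanly, from a recurrence/Wronskian-type identity relating $W_n$ to lower-order Bessel quantities). Getting the correct power of $n$ and confirming that the surviving term is $O((t/R')^n/n)$ rather than, say, $O((t/R')^n)$ is where the care is needed; this is exactly the estimate that later feeds the exponential-decay conclusion in the a posteriori bound, so it is worth stating that it can be read off from the uniform asymptotic expansions of spherical Bessel functions and citing \cite{BZHL-2020-DCDSS} for the detailed verification.
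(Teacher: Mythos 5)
Your proposal is correct and is exactly the standard variation-of-parameters argument that the paper delegates to the cited reference \cite{BZHL-2020-DCDSS}: the Wronskian identity $h_n^{(1)}(z)h_n^{(2)\prime}(z)-h_n^{(1)\prime}(z)h_n^{(2)}(z)=-2{\rm i}/z^2$ yields the prefactor ${\rm i}\kappa/2$, the identity $W_n(R,t)+W_n(R',R)S_n(t)=S_n(R)W_n(R',t)$ collapses the two integrals at $\rho=R$, and the asymptotics follow as you describe. For step (v) it is slightly cleaner to note that the $h^{(1)}h^{(2)}$ cross-products cancel exactly, leaving $W_n(R',t)=-2{\rm i}\left[j_n(\kappa R')y_n(\kappa t)-y_n(\kappa R')j_n(\kappa t)\right]$, after which the separate large-$n$ asymptotics of $j_n$ and $y_n$ give the factor $1/(2n+1)$ and the geometric growth $(t/R')^n$ directly.
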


The estimates of $w_{1n}^{m}$ and $w_{2n}^m$ at $\rho=R$ are given in the following lemma. 

\begin{lemma}\label{3Dsurface_w1nw2nR}
Let  $w_{1n}^{m}$ and $w_{2n}^m$ be the Fourier coefficients of $\boldsymbol{W}$. They satisfy the estimates
\begin{equation*}
|w_{2n}^{m}(R)|\lesssim \left(\frac{R'}{R}\right)^n |w_{2n}^{m}(R')|+\frac{1}{n^2}\|\zeta_{2n}^m(t)\|_{L^{\infty}([R', R])}
\end{equation*}
and
\begin{equation*}
|w_{1n}^m(R)| \lesssim \left(\frac{R'}{R}\right)^n |w_{3n}^m(R')|+\frac{1}{n^2}\|\zeta_{3n}^m\|_{L^{\infty}([R', R])}
+ |\zeta_{3n}^m(R) |.
\end{equation*}	
\end{lemma}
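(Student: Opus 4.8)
The plan is to reduce everything to the explicit solution formula of Lemma~\ref{3Dsurface_odes}, applied once to the ODE system of Lemma~\ref{w2n} and once to that of Lemma~\ref{w3n}, and then to track the dependence on $n$ using the stated asymptotics $|S_n(R)|\lesssim(R'/R)^n$ and $|W_n(R',t)|\lesssim n^{-1}(t/R')^n$ together with the two-sided bound \eqref{zn}. Since $h_n^{(2)}=\overline{h_n^{(1)}}$ on the real axis, $z_n^{(2)}(\kappa R)=\overline{z_n^{(1)}(\kappa R)}$, so \eqref{zn} and the relation $\Re z_n^{(1)}\sim -n-1$ give $|1+z_n^{(2)}(\kappa R)|\asymp n$ and $|z_n^{(2)}(\kappa R)|\asymp n$ as well. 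Throughout, only large $n$ matters, the finitely many small-$n$ terms being harmless under $\lesssim$, and the relevant sums run over $n\ge1$, so that $\sqrt{n(n+1)}\asymp n$.

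First, for $w_{2n}^m$: applying Lemma~\ref{3Dsurface_odes} to the system in Lemma~\ref{w2n} with $\xi=\zeta_{2n}^m$ and evaluating at $\rho=R$ gives
\[
w_{2n}^m(R)=S_n(R)\,w_{2n}^m(R')+\frac{{\rm i}\kappa}{2}\int_{R'}^{R}t^2 S_n(R)W_n(R',t)\zeta_{2n}^m(t)\,{\rm d}t.
\]
The first term is $\lesssim(R'/R)^n|w_{2n}^m(R')|$. For the integral I would insert the two asymptotic bounds and use $\int_{R'}^{R}t^2(t/R')^n\,{\rm d}t\lesssim n^{-1}R^3(R/R')^n$, so that the powers $(R'/R)^n$ and $(R/R')^n$ cancel while the two factors of $n^{-1}$ combine to give $\lesssim n^{-2}\|\zeta_{2n}^m\|_{L^\infty([R',R])}$. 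This yields the first claimed estimate.

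Next, for $w_{1n}^m$: I would start from the closed form \eqref{3Dsurface_w1nR}, which, using $|1+z_n^{(2)}(\kappa R)|\asymp n\asymp\sqrt{n(n+1)}$, gives $|w_{1n}^m(R)|\lesssim|v_n^m(R)|+n^{-1}|C_n^m(R)|$. From the definition of $C_n^m(R)$ and $|1+z_n^{(2)}(\kappa R)|/|z_n^{(2)}(\kappa R)|\asymp1$ one obtains $|C_n^m(R)|\lesssim|\zeta_{3n}^m(R)|$, and hence $|v_n^m(R')|=|R'w_{3n}^m(R')+C_n^m(R)|\lesssim|w_{3n}^m(R')|+|\zeta_{3n}^m(R)|$. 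Applying Lemma~\ref{3Dsurface_odes} to the system in Lemma~\ref{w3n} with $\xi=\beta_n^m$ and $\rho=R$,
\[
v_n^m(R)=S_n(R)\,v_n^m(R')+\frac{{\rm i}\kappa}{2}\int_{R'}^{R}t^2 S_n(R)W_n(R',t)\beta_n^m(t)\,{\rm d}t,
\]
I would split $\beta_n^m(t)=t\zeta_{3n}^m(t)-(\kappa^2-n(n+1)/t^2)C_n^m(R)$: the first piece is handled exactly as in the $w_{2n}^m$ case and contributes $\lesssim n^{-2}\|\zeta_{3n}^m\|_{L^\infty([R',R])}$; the second carries a coefficient of size $O(n^2)$ times $|C_n^m(R)|\lesssim|\zeta_{3n}^m(R)|$, which is absorbed by the $n^{-2}$ decay coming from $|S_n(R)W_n(R',t)|$ and the $t$-integration, leaving $\lesssim|\zeta_{3n}^m(R)|$. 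Together with $|S_n(R)v_n^m(R')|\lesssim(R'/R)^n(|w_{3n}^m(R')|+|\zeta_{3n}^m(R)|)$, this gives the stated bound for $|v_n^m(R)|$, and feeding it back into \eqref{3Dsurface_w1nR} completes the proof.

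The hard part will be the bookkeeping in the last display: the source $\beta_n^m$ grows like $n^2$ because of the $n(n+1)/t^2$ coefficient multiplying the constant $C_n^m(R)$, and one must check that the $\frac{1}{n(n+1)}$ decay produced jointly by $|S_n(R)|$, $|W_n(R',t)|$ and the integration in $t$ cancels this growth exactly, so that no spurious power of $n$ survives in front of the $|\zeta_{3n}^m(R)|$ term. A secondary point worth making explicit is the asymptotics of $z_n^{(2)}$, which are not recorded in the excerpt but follow at once from $z_n^{(2)}(\kappa R)=\overline{z_n^{(1)}(\kappa R)}$ and \eqref{zn}.
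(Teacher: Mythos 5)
Your proposal is correct and follows essentially the same route as the paper: both apply the explicit solution formula of Lemma \ref{3Dsurface_odes} to the ODE systems of Lemmas \ref{w2n} and \ref{w3n}, feed the result into \eqref{3Dsurface_w1nR}, use $|1+z_n^{(2)}(\kappa R)|\asymp n$ and $|C_n^m(R)|\lesssim|\zeta_{3n}^m(R)|$, and verify that the $O(n^2)$ coefficient of $C_n^m(R)$ in $\beta_n^m$ is exactly offset by the $n^{-2}$ decay coming from $|S_n(R)W_n(R',t)|$ and the $t$-integration. The paper carries out this last bookkeeping by computing the asymptotics of $\int_{R'}^R W_n(R',t)\,{\rm d}t$ and $\int_{R'}^R t^2W_n(R',t)\,{\rm d}t$ explicitly, but no cancellation beyond your term-by-term power counting is actually used, so your version of the estimate is complete.
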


\begin{proof}
It follows from Lemmas \ref{w2n} and \ref{3Dsurface_odes} that
\begin{equation*}
	|w_{2n}^{m}(R)|\lesssim \left(\frac{R'}{R}\right)^n |w_{2n}^{m}(R')|+\frac{1}{n^2}\|\zeta_{2n}^m(t)\|_{L^{\infty}([R', R])}.
\end{equation*}
In addition, we have from Lemma \ref{3Dsurface_w1nw2nR} that
\begin{equation*}
v_n^m(R)=S_n(R) v_n^m(R')+\frac{{\rm i}\kappa}{2}\int_{R'}^{R} t^2 S_n(R)W_n(R', t)\beta_n^m(t){\rm d}t.
\end{equation*}
Substituting the above equation into \eqref{3Dsurface_w1nR}, we obtain 
\begin{eqnarray*}
	w_{1n}^{m}(R)  &=& \frac{1}{\sqrt{n(n+1)}}\frac{1}{R}\left[1+z_n^{(2)}(\kappa R)\right]v_n^m(R)
		-\frac{1}{\sqrt{n(n+1)}}\frac{1}{R} C_n^{m}(R) \\
	&= & \frac{1}{\sqrt{n(n+1)}}\frac{1}{R}\left[1+z_n^{(2)}(\kappa R)\right]S_n(R) v_n^m(R')
	-\frac{1}{\sqrt{n(n+1)}}\frac{1}{R} C_n^{m}(R)\\
	&& +\frac{1}{\sqrt{n(n+1)}}\frac{1}{R}\left[1+z_n^{(2)}(\kappa R)\right]
			\frac{{\rm i}\,\kappa}{2}\int_{R'}^{R} t^2 S_n(R)W_n(R', t)\beta_{n}^m(t)\,{\rm d}t\\
	&=& \frac{1}{\sqrt{n(n+1)}}\frac{1}{R}\left\{
		\left[1+z_n^{(2)}(\kappa R)\right]S_n(R)\left[R' w_{3n}^m(R')+C_n^m(R)\right]-C_n^m(R)
	\right\}\\
	&& +\frac{{\rm i}\kappa}{2}\frac{1}{\sqrt{n(n+1)}}\frac{1}{R}\left[1+z_n^{(2)}(\kappa R)\right]
			\int_{R'}^{R} t^3 S_n(R)W_n(R', t)\zeta_{3n}^m(t){\rm d}t\\
	&& -\frac{{\rm i}\kappa}{2}\frac{1}{\sqrt{n(n+1)}}\frac{1}{R}\left[1+z_n^{(2)}(\kappa R)\right]	
		\int_{R'}^{R} t^2 S_n(R)W_n(R', t)\Big(\kappa^2-\frac{n(n+1)}{t^2}\Big)C_n^m(R){\rm d}t,
\end{eqnarray*}
which can be simplified to 
\begin{eqnarray*}
	w_{1n}^{m}(R)  &=& \frac{1}{\sqrt{n(n+1)}}\frac{R'}{R}
		\left[1+z_n^{(2)}(\kappa R)\right]S_n(R) w_{3n}^m(R') \\
		&& +\frac{{\rm i}\kappa}{2}\frac{1}{\sqrt{n(n+1)}}\frac{1}{R}\left[1+z_n^{(2)}(\kappa R)\right] S_n(R)
			\int_{R'}^{R} t^3W_n(R', t)\zeta_{3n}^m(t){\rm d}t\\
		&& -\frac{1}{\sqrt{n(n+1)}}\frac{1}{R} C_n^m(R)
		+\frac{1}{\sqrt{n(n+1)}}\frac{1}{R}\left[1+z_n^{(2)}(\kappa R)\right]S_n(R) C_n^m(R)\\
		&&\times\bigg\{
		1-\frac{{\rm i}\kappa^3}{2}\int_{R'}^{R} t^2 W_n(R', t){\rm d}t+\frac{{\rm i}\kappa}{2}n(n+1)\int_{R'}^{R}  W_n(R', t){\rm d}t\bigg\}. 
\end{eqnarray*}

Using the asymptotic expansion (cf. \cite[pp. 12]{BZHL-2020-DCDSS})
\[
W_n(R', t)\sim -\frac{2{\rm i}}{(2n+1) \kappa R'}\left(\frac{t}{R'}\right)^n,\quad n\rightarrow\infty,
\]
we get from straightforward calculations that 
\begin{eqnarray*}
\int_{R'}^{R}  W_n(R', t){\rm d}t &\sim&  -\frac{2{\rm i}}{(2n+1)(n+1)}\frac{1}{\kappa}\left(\frac{R}{R'}\right)^{n+1},\\
\int_{R'}^{R} t^2 W_n(R', t){\rm d}t &\sim&  -\frac{2{\rm i}}{(2n+1)(n+3)}\frac{R^2}{\kappa}\left(\frac{R}{R'}\right)^{n+1}.
\end{eqnarray*}
Substituting the above equations into $w_{1n}^m(R)$ yields 
\begin{eqnarray*}
		&&1-\frac{{\rm i}\kappa^3}{2}\int_{R'}^{R} t^2 W_n(R', t){\rm d}t
		+\frac{{\rm i}\,\kappa}{2}(n+1)n\int_{R'}^{R}  W_n(R', t){\rm d}t\\
		&&\sim 1-\frac{\kappa^2 R^2}{(2n+1)(n+3)}\left(\frac{R}{R'}\right)^{n+1}
		+\frac{n}{2n+1}\left(\frac{R}{R'}\right)^{n+1}, 
\end{eqnarray*} 
which gives 
\begin{eqnarray*}
	&&\Bigg|\frac{1}{\sqrt{n(n+1)}}\frac{1}{R}	\left[1+z_n^{(2)}(\kappa R)\right]S_n(R) C_n^m(R)\bigg\{
		1-\frac{{\rm i}\,\kappa^3}{2}\int_{R'}^{R} t^2 W_n(R', t){\rm d}t \\
		&& \quad +\frac{{\rm i}\kappa}{2}\sqrt{(n+1)n}\int_{R'}^{R}  W_n(R', t){\rm d}t
		\bigg\}-\frac{1}{\sqrt{n(n+1)}}\frac{1}{R} C_n^m(R)\Bigg|\\
	&&\lesssim 	\left|\frac{1+z_n^{(2)}(\kappa R)}{2n+1}\frac{1}{R'}C_n^m(R)\right|.
\end{eqnarray*}

It is shown in \cite[Lemma 3.1]{LY-ipi} that
\[
z_n(t)=-(n+1)+\frac{t^4}{16n}+\frac{t^2}{2n}+O\left(\frac{1}{n^2}\right).
\]
Hence 
\[
	\Bigg|\frac{1+z_n^{(2)}(\kappa R)}{2n+1}\frac{1}{R'}C_n^m(R)
	\Bigg|\lesssim \frac{1}{2 R'}\left|C_n^m(R)\right|.
\]
Plugging the above equation into to $w_{1n}^m(R)$, we obtain 
\begin{eqnarray*}
	|w_{1n}^m(R)| \lesssim \left(\frac{R'}{R}\right)^n |w_{3n}^m(R')|
	+\frac{1}{n^2}\|\zeta_{3n}^m\|_{L^{\infty}([R', R])}+\left|C_n^m(R)\right|.
\end{eqnarray*}
Noting
\[
\left|C_n^m(R)\right|=\left|\frac{1+z_{n}^{(2)}(\kappa R)}{z_{n}^{(2)}(\kappa R)} \frac{R}{\kappa^2}\zeta_{3n}^m(R)\right|
\sim \frac{R}{\kappa^2} |\zeta_{3n}^m(R)|,
\] 
we have 
\begin{eqnarray*}
	|w_{1n}^m(R)| 
	\lesssim \left(\frac{R'}{R}\right)^n |w_{3n}^m(R')|+\frac{1}{n^2}\|\zeta_{3n}^m\|_{L^{\infty}([R', R])}
	+|\zeta_{3n}^m(R)|.
\end{eqnarray*}
The estimate for $w_{2n}^{m}(R)$ can be obtained by following the same steps.
\end{proof}

The following result is crucial to prove Theorem \ref{thm}.

\begin{lemma}\label{Lemma59}
Let $\boldsymbol{W}$ be the solution of the dual problem. Then the following estimate holds: 
	\[
		\kappa\left|\int_{\Gamma_R} \left(\mathscr{T}-\mathscr{T}^N\right)\boldsymbol{\xi}_{\Gamma_R}\cdot
		\overline{\boldsymbol{W}}_{\Gamma_R}\right|\lesssim
		\frac{1}{N}\|\boldsymbol{\xi}\|_{H({\rm curl}\,\Omega)}^2.
	\]
\end{lemma}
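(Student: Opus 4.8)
The plan is to expand everything in the vector spherical harmonic basis on $\Gamma_R$, reduce the claim to a weighted Cauchy--Schwarz estimate over the tail $n>N$, and then extract the factor $1/N$ from the decay of the DtN symbols together with Lemma~\ref{3Dsurface_w1nw2nR}. First I would write $\boldsymbol{\xi}_{\Gamma_R}=\sum_{n\in\mathbb{N}}\sum_{|m|\le n}\xi_{1n}^m\boldsymbol{U}_n^m+\xi_{2n}^m\boldsymbol{V}_n^m$ and read off from \eqref{Wdual} the boundary coefficients $w_{1n}^m(R),w_{2n}^m(R)$ of $\boldsymbol{W}$. By \eqref{CapacityOpe}, \eqref{TrunCapacity} and the orthonormality of $\{\boldsymbol{U}_n^m,\boldsymbol{V}_n^m\}$,
\[
\int_{\Gamma_R}\left(\mathscr{T}-\mathscr{T}^N\right)\boldsymbol{\xi}_{\Gamma_R}\cdot\overline{\boldsymbol{W}}_{\Gamma_R}\,{\rm d}s
=\sum_{n>N}\sum_{|m|\le n}\frac{{\rm i}\kappa R}{1+z_n^{(1)}(\kappa R)}\,\xi_{1n}^m\,\overline{w_{1n}^m(R)}
+\frac{1+z_n^{(1)}(\kappa R)}{{\rm i}\kappa R}\,\xi_{2n}^m\,\overline{w_{2n}^m(R)},
\]
so, using the bounds \eqref{zn} on the two symbols, which are $\lesssim 1/n$ and $\lesssim n$ respectively, it suffices to estimate
\[
\kappa\sum_{n>N}\sum_{|m|\le n}\left(\frac{1}{n}\,|\xi_{1n}^m|\,|w_{1n}^m(R)|+n\,|\xi_{2n}^m|\,|w_{2n}^m(R)|\right).
\]

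Next I would insert the bounds of Lemma~\ref{3Dsurface_w1nw2nR}, splitting $|w_{1n}^m(R)|$ into the three pieces $(R'/R)^n|w_{3n}^m(R')|$, $n^{-2}\|\zeta_{3n}^m\|_{L^\infty([R',R])}$, $|\zeta_{3n}^m(R)|$ and $|w_{2n}^m(R)|$ into $(R'/R)^n|w_{2n}^m(R')|$ and $n^{-2}\|\zeta_{2n}^m\|_{L^\infty([R',R])}$, and treat each resulting double sum by Cauchy--Schwarz after distributing powers of $n$. In every piece the $\boldsymbol{\xi}$-factors are collected into $\|\boldsymbol{\xi}_{\Gamma_R}\|_{TH^{-1/2}({\rm curl},\Gamma_R)}$, which is $\lesssim\|\boldsymbol{\xi}\|_{\boldsymbol{H}({\rm curl},\Omega)}$ by Lemma~\ref{TraceRegularity}; the traces $w_{jn}^m(R')$ on $\Gamma_{R'}$ are collected into $\|\boldsymbol{W}\|_{\boldsymbol{H}({\rm curl},\Omega)}$ by a trace estimate (note $\Gamma_{R'}$ lies in the interior of $\Omega$ and $\nabla\cdot\boldsymbol{W}=0$ there, since $\nabla\cdot\boldsymbol{\zeta}=0$), hence into $\|\boldsymbol{\xi}\|_{\boldsymbol{H}({\rm curl},\Omega)}$ through the well-posedness bound $\|\boldsymbol{W}\|_{\boldsymbol{H}({\rm curl},\Omega)}\lesssim\|\boldsymbol{\zeta}\|_{\boldsymbol{L}^2(\Omega)}$ of the dual problem; and the $\boldsymbol{\zeta}$-factors are collected into $\|\boldsymbol{\zeta}\|_{\boldsymbol{H}^1(\Omega)}$, which is $\lesssim\|\boldsymbol{\xi}\|_{\boldsymbol{H}({\rm curl},\Omega)}$ by the Birman--Solomyak decomposition, Lemma~\ref{Decomposition}. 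Here one uses $\sum_{n,m}\|\zeta_{jn}^m\|^2_{L^\infty([R',R])}\lesssim\|\boldsymbol{\zeta}\|^2_{\boldsymbol{H}^1(\Omega)}$, which follows from a one-dimensional Sobolev inequality on each radial slice together with the Fourier characterization of the $\boldsymbol{H}^1$-norm on the annulus, and the trace bound $\sum_{n,m}\sqrt{1+n(n+1)}\,|\zeta_{3n}^m(R)|^2\lesssim\|\boldsymbol{\zeta}\cdot\boldsymbol{e}_\rho\|^2_{H^{1/2}(\Gamma_R)}\lesssim\|\boldsymbol{\zeta}\|^2_{\boldsymbol{H}^1(\Omega)}$.

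The decisive term, and the step I expect to be the main obstacle, is the borderline contribution $\kappa\sum_{n>N}\sum_{|m|\le n}\frac{1}{n}\,|\xi_{1n}^m|\,|\zeta_{3n}^m(R)|$ coming from the last piece of $|w_{1n}^m(R)|$: here the symbol supplies only a single power $1/n\le 1/N$, and the remaining product $\bigl(\tfrac{1}{\sqrt n}|\xi_{1n}^m|\bigr)\bigl(\sqrt n\,|\zeta_{3n}^m(R)|\bigr)$ must still be summable. This works precisely because $\tfrac{1}{\sqrt n}|\xi_{1n}^m|$ is square-summable with $\ell^2$-norm $\lesssim\|\boldsymbol{\xi}\|_{\boldsymbol{H}({\rm curl},\Omega)}$ (Lemma~\ref{TraceRegularity}) and, simultaneously, $\sqrt n\,|\zeta_{3n}^m(R)|$ is square-summable with $\ell^2$-norm $\lesssim\|\boldsymbol{\zeta}\|_{\boldsymbol{H}^1(\Omega)}\lesssim\|\boldsymbol{\xi}\|_{\boldsymbol{H}({\rm curl},\Omega)}$; that is, one must exploit the extra half-derivative on the divergence-free part $\boldsymbol{\zeta}$ furnished by Lemma~\ref{Decomposition}, which is not available for $\boldsymbol{\xi}$ itself. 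Pulling $1/N$ out in front and applying Cauchy--Schwarz then bounds this term by $\lesssim\frac{1}{N}\|\boldsymbol{\xi}\|^2_{\boldsymbol{H}({\rm curl},\Omega)}$.

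For every remaining contribution one gains strictly more than $1/N$: the pieces carrying $(R'/R)^n$ are in fact exponentially small, since $n^{\alpha}(R'/R)^{2n}$ is bounded for every $\alpha$, so the corresponding tail sums are $\lesssim (R'/R)^{N}\|\boldsymbol{\xi}\|^2_{\boldsymbol{H}({\rm curl},\Omega)}$; and the pieces carrying $n^{-2}\|\zeta_{jn}^m\|_{L^\infty([R',R])}$ acquire, after the symbol bound and the weight split, a power $n^{-3/2}$ or better in front of an $\ell^2\times\ell^2$ product, hence are $\lesssim N^{-3/2}\|\boldsymbol{\xi}\|^2_{\boldsymbol{H}({\rm curl},\Omega)}$. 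Adding the finitely many pieces gives the asserted bound $\lesssim\frac{1}{N}\|\boldsymbol{\xi}\|^2_{\boldsymbol{H}({\rm curl},\Omega)}$, which completes the proof.
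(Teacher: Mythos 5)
Your proposal is correct and follows essentially the same route as the paper: expand in the vector spherical harmonic basis, bound the two DtN symbols by $1/n$ and $n$ via \eqref{zn}, insert the bounds of Lemma~\ref{3Dsurface_w1nw2nR}, and treat each piece by weighted Cauchy--Schwarz, with the exponential factors $(R'/R)^n$ absorbing all polynomial weights and the $L^\infty$ pieces controlled through the one-dimensional Sobolev inequality and the Fourier characterization of $\|\nabla\times\boldsymbol{\zeta}\|_{\boldsymbol{L}^2}$. In particular you correctly single out the term $\sum_{n>N}\frac{1}{n}|\xi_{1n}^m||\zeta_{3n}^m(R)|$ (the paper's $I_3$) as the borderline contribution that caps the rate at $1/N$ and requires the extra regularity of the divergence-free part $\boldsymbol{\zeta}$ from Lemma~\ref{Decomposition}, which is exactly how the paper closes the argument.
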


\begin{proof}
Using \eqref{CapacityOpe} and \eqref{TrunCapacity}, we have 
\begin{eqnarray*}
	\kappa\left|\int_{\Gamma_R} \left(\mathscr{T}-\mathscr{T}^N\right)\boldsymbol{\xi}_{\Gamma_R}\cdot
	\overline{\boldsymbol{W}}_{\Gamma_R}\right| &\leq &\kappa\left|
	\sum\limits_{n=N+1}\sum\limits_{|m|\leq n} \frac{{\rm i}\,\kappa R}{1+z_n^{(1)}(\kappa R)}
	\xi_{1n}^m(R)\overline{w}_{1n}^m(R)\right| \\
	&& +\kappa\left|\sum\limits_{n=N+1}\sum\limits_{|m|\leq n} \frac{1+z_n^{(1)}(\kappa R)}{{\rm i}\,\kappa R}
	\xi_{2n}^m(R)\overline{w}_{2n}^m(R)\right|.
\end{eqnarray*}
It follows from the Cauchy--Schwarz inequality that
\begin{eqnarray*}
	&&\sum\limits_{n=N+1}\sum\limits_{|m|\leq n}
	\left|\frac{1+z_n^{(1)}(\kappa R)}{\kappa R}\right| \left|\xi_{2n}^m(R)\right| 
	\left|w_{2n}^m(R)\right|\\
	&&\leq \sum\limits_{n=N+1}\sum\limits_{|m|\leq n}
	\left|\frac{1+z_n^{(1)}(\kappa R)}{\kappa R}\right|\left(1+n(n+1)\right)^{1/4} 
	\left|\xi_{2n}^m(R)\right| \left(1+n(n+1)\right)^{-1/4} \left|w_{2n}^m(R)\right|\\
	&&\leq \frac{1}{N^2} \left[\sum\limits_{n=N+1}\sum\limits_{|m|\leq n}
		\sqrt{1+n(n+1)}\left|\xi_{2n}^m(R)\right|^2 \right]^{1/2}\\
	&&\quad\times\left[\sum\limits_{n=N+1}\sum\limits_{|m|\leq n}
		\left|\frac{1+z_n^{(1)}(\kappa R)}{\kappa R}\right|^2
		\frac{N^4}{\sqrt{1+n(n+1)}}\left|w_{2n}^m(R)\right|^2 \right]^{1/2}\\
	&&\leq \frac{1}{N^2} \|\boldsymbol{\xi}\|_{TH^{-\frac{1}{2}}({\rm curl}, \Gamma_R)}
	\left[\sum\limits_{n=N+1}\sum\limits_{|m|\leq n} n^5\left|w_{2n}^m(R)\right|^2\right]^{1/2}.
\end{eqnarray*}
By Lemma \ref{3Dsurface_w1nw2nR}, we have 
\begin{eqnarray}\label{dualn51}
	\sum\limits_{n=N+1}\sum\limits_{|m|\leq n} n^5\left|w_{2n}^m(R)\right|^2
	&=&\sum\limits_{n=N+1}\sum\limits_{|m|\leq n} n^5\left[ 
	\left(\frac{R'}{R}\right)^{2n}|w_{2n}^m(R')|^2+\frac{1}{n^4}\|\zeta_{2n}^m(t)\|^2_{L^{\infty}([R', R])}\right]\notag\\
	&=&\sum\limits_{n=N+1}\sum\limits_{|m|\leq n} 
	n^5\left(\frac{R'}{R}\right)^{2n}|w_{2n}^m(R')|^2
	+n\|\zeta_{2n}^m(t)\|^2_{L^{\infty}([R', R])}. 
\end{eqnarray}
We have from Lemma \ref{TraceRegularity} that
\begin{eqnarray}\label{dualn52}
	&&\sum\limits_{n=N+1}\sum\limits_{|m|\leq n} 
	n^5\left(\frac{R'}{R}\right)^{2n}|w_{2n}^m(R')|^2 \lesssim
	\max\left(n^4\left(\frac{R'}{R}\right)^{2n}\right)\|\boldsymbol{\boldsymbol{W}}\|^2_{TH^{-1/2}({\rm curl}, \Gamma_R)}\notag\\
	&&\lesssim \max\left(n^4\left(\frac{R'}{R}\right)^{2n}\right)
		\|\boldsymbol{\boldsymbol{W}}\|^2_{\boldsymbol{H}({\rm curl},\, \Omega)}
	\lesssim \max\left(n^4\left(\frac{R'}{R}\right)^{2n}\right)
		\|\boldsymbol{\boldsymbol{\zeta}}\|^2_{\boldsymbol{H}^1(\Omega)}.
\end{eqnarray}
It is shown in \cite{JLLZ-JSC-2017} that
\begin{equation}\label{3Dsurface_Linfty}
	\|\zeta(t)\|^2_{L^{\infty}([R', R])}
	\leq \left(\frac{2}{R-R'}+n\right)\|\zeta(t)\|^2_{L^2([R', R])}+\frac{1}{n}\|\zeta'(t)\|^2_{L^2([R', R])}.
\end{equation}
Moreover,
\begin{eqnarray*}
\|\zeta_{2n}^{m'}(t)\|^2_{L^2(R', R)} &\leq& 
\left(\frac{1}{R'}\right)^2 \|t\zeta_{2n}^{m'}(t)\|_{L^2(R', R)}^2\\
&\leq& \left(\frac{1}{R'}\right)^2 \|t\zeta_{2n}^{m'}(t)+\zeta_{2n}^m(t)\|^2_{L^2(R', R)}
+\left(\frac{1}{R'}\right)^2\|\zeta_{2n}^m(t)\|^2_{L^2(R', R)}.
\end{eqnarray*}
Combining \eqref{semicurl} and \eqref{3Dsurface_Linfty} leads to 
\begin{eqnarray}\label{dualn53}
	&& \sum\limits_{n=N+1}\sum\limits_{|m|\leq n}n\|\zeta_{2n}^m(t)\|^2_{L^{\infty}([R', R])} \notag\\
	&&\leq \sum\limits_{n=N+1}\sum\limits_{|m|\leq n}\left[\left(\frac{2}{R-R'}+n\right)n
	+\left(\frac{1}{R'}\right)^2\right]\|\zeta_{2n}^m(t)\|^2_{L^2([R', R])}\notag\\
	&&\quad +\left(\frac{1}{R'}\right)^2\|t\zeta_{2n}^{m'}(t)+\zeta_{2n}^m(t)\|^2_{L^2([R', R])}\notag\\
	&&\lesssim \|\nabla\times\boldsymbol{\zeta}\|_{L^2(\Omega)}^2.
\end{eqnarray}
Since $\max\left(n^4\left(\frac{R'}{R}\right)^{2n}\right)$ is bounded, we have
from Lemma \ref{TraceRegularity}, \eqref{dualn51}--\eqref{dualn52} and \eqref{dualn53} that
\begin{eqnarray}\label{3Dsurface_lemma59w2n}
	\sum\limits_{n=N+1}\sum\limits_{|m|\leq n}
	\left|\frac{1+z_n^{(1)}(\kappa R)}{\kappa R}\right| \left|\xi_{2n}^m(R)\right| 
	\left|w_{2n}^m(R)\right|
	 \lesssim  \frac{1}{N^2} \|\boldsymbol{\xi}\|^2_{H({\rm curl}, \Omega)}. 
\end{eqnarray}

Next is to estimate $\xi_{1n}^m(R)\overline{w}_{1n}^m(R)$. It follows from Lemma 
\ref{3Dsurface_w1nw2nR} that
\begin{eqnarray*}
	&&\left|\sum\limits_{n=N+1}\sum\limits_{|m|\leq n} \frac{{\rm i}\,\kappa R}{1+z_n^{(1)}(\kappa R)}
	 \xi_{1n}^m(R)\overline{w}_{1n}^m(R)\right|\\
	 && \lesssim \sum\limits_{n=N+1}\sum\limits_{|m|\leq n} \frac{1}{n} \left|\xi_{1n}^{m}(R)\right|
	 \left|w_{1n}^m(R)\right|\\
	 && \lesssim \sum\limits_{n=N+1}\sum\limits_{|m|\leq n} \frac{1}{n} \left|\xi_{1n}^{m}(R)\right|
	 \left[ \left(\frac{R'}{R}\right)^n |w_{3n}^m(R')|+\frac{1}{n^2}\|\zeta_{3n}^m\|_{L^{\infty}([R', R])}
	+ |\zeta_{3n}^m(R) |\right]\\
	&&:= I_1+I_2+I_3.
\end{eqnarray*}
A straightforward calculation yields
\begin{eqnarray*}
	I_1 &=& \sum\limits_{n=N+1}\sum\limits_{|m|\leq n} \frac{1}{n}\left(\frac{R'}{R}\right)^n
	\left|\xi_{1n}^{m}(R)\right||w_{3n}^m(R')|\\
	&=& \sum\limits_{n=N+1}\sum\limits_{|m|\leq n}\frac{1}{n^2} \frac{1}{\sqrt{n}}
	\left|\xi_{1n}^{m}(R)\right|
	\left(\frac{R'}{R}\right)^n n^{\frac{3}{2}}|w_{3n}^m(R')|\\
	&\lesssim& \frac{1}{N^2} \left( \sum\limits_{n=N+1}\sum\limits_{|m|\leq n}
	\frac{1}{\sqrt{1+n(n+1)}}\left|\xi_{1n}^{m}(R)\right|^2
	\right)^{1/2}  \left( \sum\limits_{n=N+1}\sum\limits_{|m|\leq n}
	\left(\frac{R'}{R}\right)^{2n} n^{3}|w_{3n}^m(R')|^2\right)^{1/2} \\
	&\lesssim& \frac{1}{N^2} \|\boldsymbol{\xi}\|_{TH^{-1/2}({\rm curl}, \Gamma_R)}
	\max\left(n^2\left(\frac{R'}{R}\right)^n\right) \left( \sum\limits_{n=N+1}\sum\limits_{|m|\leq n}
	\frac{1}{\sqrt{n(n+1)}}|w_{3n}^m(R')|^2\right)^{1/2} \\
	&\lesssim& \frac{1}{N^2} \|\boldsymbol{\xi}\|_{\boldsymbol{H}({\rm curl}, \Omega)}
	\|\boldsymbol{W}\|_{\boldsymbol{H}^{-1/2}(\Gamma_{R'})}\lesssim  \frac{1}{N^2}\|\boldsymbol{\xi}\|^2_{H({\rm curl}, \Omega)}.
\end{eqnarray*}
Since $\nabla\cdot\boldsymbol{\zeta}=0$, it can be obtained from Lemma \ref{3Dsurface_divfree} that
\begin{eqnarray*}
	\zeta_{3n}^{m'}(\rho)+\frac{2}{\rho}\zeta_{3n}^m(\rho)=\sqrt{n(n+1)}\frac{1}{\rho}\zeta_{1n}^m(\rho).
\end{eqnarray*}
Then we have
\begin{eqnarray*}
	\int_{R'}^R \left|\zeta_{3n}^{m'}(t)\right|^2\,{\rm d}t &=&
	\int_{R'}^R \left|\sqrt{n(n+1)}\frac{1}{\rho}\zeta_{1n}^m(\rho)
	-\frac{2}{\rho}\zeta_{3n}^m(\rho)\right|^2\,{\rm d}t\\
	&\lesssim& n(n+1) \|\zeta_{1n}^m\|_{L^2([R', R])}^2+\|\zeta_{3n}^m\|_{L^2([R', R])}^2.
\end{eqnarray*}
Substituting the above equation into \eqref{3Dsurface_Linfty} gives 
\begin{eqnarray*}
	\|\zeta_{3n}^m(t)\|^2_{L^{\infty}([R', R])}
	&\leq& \left(\frac{2}{R-R'}+n\right)\|\zeta_{3n}^m(t)\|^2_{L^2([R', R])}+\frac{1}{n}\|\zeta_{3n}^{m'}(t)\|^2_{L^2([R', R])}\\
	&\lesssim& \left(\frac{2}{R-R'}+n\right)\|\zeta_{3n}^m(t)\|^2_{L^2([R', R])}
	+n\|\zeta_{1n}^m\|_{L^2([R', R])}^2.
\end{eqnarray*}
For $I_2$, we have 
\begin{eqnarray*}
	I_2 &=& \sum\limits_{n=N+1}\sum\limits_{|m|\leq n}\frac{1}{n^3}
	 \left|\xi_{1n}^{m}(R)\right|\|\zeta_{3n}^m\|_{L^{\infty}([R', R])}\\
	 &\lesssim& \frac{1}{N^2}  \left( \sum\limits_{n=N+1}\sum\limits_{|m|\leq n}
	\frac{1}{\sqrt{n(n+1)}}\left|\xi_{1n}^{m}(R)\right|^2
	\right)^{1/2} \left( \sum\limits_{n=N+1}\sum\limits_{|m|\leq n}
	\frac{1}{n}\|\zeta_{3n}^m\|_{L^{\infty}([R', R])}^2 \right)^{1/2}\\
	&\lesssim& \frac{1}{N^2} \|\boldsymbol{\xi}\|_{\boldsymbol{H}({\rm curl},\,\Omega)} 
	\|\boldsymbol{\zeta}\|_{L^2(\Omega)}
	\leq  \frac{1}{N^2} \|\boldsymbol{\xi}\|_{H(\rm curl,\,\Omega)}^2.
\end{eqnarray*}
Similarly, it can be shown that 
\begin{eqnarray*}
	I_3 &=&  \sum\limits_{n=N+1}\sum\limits_{|m|\leq n}\frac{1}{n}
	 \left|\xi_{1n}^{m}(R)\right| \left|\zeta_{3n}^{m}(R)\right|\\
	 &=&  \sum\limits_{n=N+1}\sum\limits_{|m|\leq n} \frac{1}{n}
	 (1+n(n+1))^{-1/4} \left|\xi_{1n}^{m}(R)\right| (1+n(n+1))^{1/4} \left|\zeta_{3n}^{m}(R)\right|\\
	 &\lesssim& \frac{1}{N}\left( \sum\limits_{n=N+1}\sum\limits_{|m|\leq n}
	\frac{1}{\sqrt{1+n(n+1)}}\left|\xi_{1n}^{m}(R)\right|^2
	\right)^{1/2} \left( \sum\limits_{n=N+1}\sum\limits_{|m|\leq n}
	\sqrt{1+n(n+1)}\left|\zeta_{3n}^{m}(R)\right|^2
	\right)^{1/2} \\
	&\lesssim& \frac{1}{N} \|\boldsymbol{\xi}\|_{H^{-1/2}(\Gamma_R)} 
	\|\boldsymbol{\zeta}\|_{H^{1/2}(\Gamma_R)}
	\lesssim  \frac{1}{N} \|\boldsymbol{\xi}\|^2_{H({\rm curl}, \Omega)}.
\end{eqnarray*}

Combining the estimates of $I_1$, $I_2,$ $I_3$ and Lemma \ref{Decomposition}, we obtain 
\begin{equation}\label{3Dsurface_Lemma59w1}
	\left|\sum\limits_{n=N+1}\sum\limits_{|m|\leq n} \frac{{\rm i}\,\kappa R}{1+z_n^{(1)}(\kappa R)}
	 \xi_{1n}^m(R)\overline{w}_{1n}^m(R)\right|
	 \lesssim \frac{1}{N}  \|\boldsymbol{\xi}\|_{\boldsymbol{H}({\rm curl}, \Omega)}^2.
\end{equation}
It follows from \eqref{3Dsurface_lemma59w2n} and \eqref{3Dsurface_Lemma59w1} that
\begin{eqnarray*}
\kappa\left|\int_{\Gamma_R} \left(\mathscr{T}-\mathscr{T}^N\right)\boldsymbol{\xi}_{\Gamma_R}\cdot
\overline{\boldsymbol{W}}_{\Gamma_R}\right|\leq\frac{1}{N} \|\boldsymbol{\xi}\|^2_{H({\rm curl}, \Omega)},
\end{eqnarray*}
which completes the proof.
\end{proof}

Combining \eqref{dualv}--\eqref{L2error}, \eqref{L22}, and Lemmas {\color{red}\ref{L22}} and \ref{Lemma59}, we obtain 
\begin{eqnarray}\label{L2bounds}
	\|\boldsymbol{\xi}\|_{\boldsymbol{L}^2(\Omega)} &=& \left|\left(\boldsymbol{\xi}, \boldsymbol{\zeta}\right)
	+(\boldsymbol{\xi}, \nabla q)\right|\notag\\
	&=& \left|a(\boldsymbol{\xi}, \boldsymbol{W})
		+{\rm i}\kappa\int_{\Gamma_R}\left(\mathscr{T}-\mathscr{T}^N\right)\boldsymbol{\xi}_{\Gamma_R}
		\cdot\overline{\boldsymbol{W}_{\Gamma_R}}{\rm d}s\right|\notag\\
	&&	+\left|{\rm i}\kappa\int_{\Gamma_R}\left(\mathscr{T}-\mathscr{T}^N\right)\boldsymbol{\xi}_{\Gamma_R}
		\cdot\overline{\boldsymbol{W}_{\Gamma_R}}{\rm d}s\right|+\left|(\boldsymbol{\xi}, \nabla q)\right|\notag\\
	&\lesssim& \frac{1}{N}\|\boldsymbol{\xi}\|^2_{\boldsymbol{H}({\rm curl}, \Omega)}
	+	\Bigg(\Bigg(\sum\limits_{K\in\mathcal{M}_h}\eta_K^2\Bigg)^{1/2}
		+\left(\frac{R'}{R}\right)^N\|\boldsymbol{f}\|_{TH^{-1/2}({\rm div}, \Gamma_R)}\Bigg)\notag\\
	&&\times	\|\boldsymbol{\xi}\|_{\boldsymbol{H}({\rm curl}, \Omega)}.
\end{eqnarray}

Now we are ready to prove Theorem \ref{thm}. 

\begin{proof}
It follows from the error representation formula \eqref{Key} and Lemmas \ref{Key2} and \ref{imtbc} that
\begin{eqnarray*}
\|\boldsymbol{\xi}\|_{H({\rm curl},\Omega)}^2
&\leq& C\Bigg(\Bigg(\sum_{T\in\mathcal{M}_h}\eta_T^2\Bigg)^{1/2}
+\left(\frac{R'}{R}\right)^N\|\boldsymbol{f}\|_{TH^{-1/2}({\rm div}, \Gamma_R)}\Bigg)
\|\boldsymbol{\xi}\|_{\boldsymbol{H}({\rm curl},\Omega)}\\
&&+\delta\|\nabla\times\boldsymbol{\xi}\|^2_{\boldsymbol{L}^2(\Omega)}+C(\delta)\|\boldsymbol{\xi}\|^2_{
	\boldsymbol{L}^2(\Omega)}+C\|\boldsymbol{\xi}\|_{\boldsymbol{L}^2(\Omega)}^2,
\end{eqnarray*}
which gives after taking $\delta=1/2$ that
\begin{equation}\label{theorem_1}
\|\boldsymbol{\xi}\|_{H({\rm curl},\Omega)}^2
\lesssim \Bigg(\Bigg(\sum_{T\in\mathcal{M}_h}\eta_T^2\Bigg)^{1/2}
+\left(\frac{R'}{R}\right)^N\|\boldsymbol{f}\|_{TH^{-1/2}({\rm div}, \,\Gamma_R)}\Bigg)
\|\boldsymbol{\xi}\|_{\boldsymbol{H}({\rm curl},\Omega)}+\|\boldsymbol{\xi}\|_{\boldsymbol{L}^2(\Omega)}^2.
\end{equation}
The proof is completed by substituting \eqref{L2bounds} into \eqref{theorem_1}.
\end{proof}
  
\section{Numerical experiments} \label{sec:num}
 
In this section, we present two numerical examples to demonstrate the efficiency of the adaptive finite element DtN method.
It is shown in Theorem \ref{thm} that the a posteriori error estimator consists two parts: the finite element
discretization error $\epsilon_h$ and the DtN truncation error $\epsilon_N$ which depends on the truncation number $N$. Explicitly
\begin{equation*}\label{epsilonN}
\epsilon_h = \Bigg(\sum\limits_{T\in\mathcal M_h} \eta^2_{T}\Bigg)^{1/2}, \quad \epsilon_N =\left(\frac{R'}{R}\right)^N\|\boldsymbol{f}\|_{TH^{-1/2}({\rm div}, \Gamma_R)}. 
\end{equation*}
The algorithm of the adaptive finite element DtN method is summarized in Table 1.

\begin{table}
\caption{The adaptive finite element DtN method for the electromagnetic scattering
problem.}
\hrule \hrule
\vspace{0.8ex}
\begin{enumerate}		
\item Given the tolerance $\epsilon>0, \theta\in(0,1)$;
\item Fix the computational domain $\Omega=B_R\setminus \overline{D}$ by
choosing the radius $R$;
\item Choose $ \hat{R}$ and $N$ such that $\epsilon_N\leq 10^{-8}$;
\item Construct an initial triangulation $\mathcal M_h$ over $\Omega$ and
compute error estimators;
\item While $\epsilon_h>\epsilon$ do
\item \qquad Refine the mesh $\mathcal M_h$ according to the strategy:
\[
\text{if } \eta_{\hat{T}}>\theta \max\limits_{T\in \mathcal M_h}
\eta_{T}, \text{ then refine the element } \hat{T}\in M_h;
\]
\item \qquad Denote refined mesh still by $\mathcal M_h$, solve the discrete
problem \eqref{vNhform} on the new mesh $\mathcal M_h$;
\item \qquad Compute the corresponding error estimators;
\item End while.
\end{enumerate}
\vspace{0.8ex}
\hrule\hrule
\end{table}

Our implementation is based on the parallel hierarchical grid (PHG) \cite{phg}, which is a toolbox for developing parallel adaptive finite element methods on unstructured tetrahedral meshes. The linear system resulted from finite element discretization is solved by the MUMPS direct solver \cite{mumps}. 
 
\begin{example}\label{ex1}
	Let the obstacle $D=B_{0.1}$ be a ball centered at the origin with radius $0.1$.  
	The Dirichlet boundary condition on $\Gamma_D$ is set by the exact solution
	\begin{equation*}
	\boldsymbol{E}(\boldsymbol{x})=\boldsymbol{G}(\boldsymbol{x})+ k^{-2}\nabla\nabla\cdot\boldsymbol{G}(\boldsymbol{x}),
	\end{equation*}
	where the wavenumber $k=2$ and
	\begin{equation*} 
	\boldsymbol{G}=(0,0,\Phi), \quad \Phi(\boldsymbol x, \boldsymbol y)=\frac{e^{{\rm i} k|\boldsymbol{x}-\boldsymbol{y}|}}{4\pi|\boldsymbol{x}-\boldsymbol{y}|}, \quad \boldsymbol{y}=(0,0,0),
    \end{equation*}
	i.e., the point source is located at $\boldsymbol{y}=(0,0,0)^\top$. The truncated computational domain is defined by 
	$B_{0.5}$, which is a ball centered at the origin with radius $0.5$. 
\end{example}

The surface plots of the amplitude of the field $\boldsymbol{E}_h^N$ are shown in Figure \ref{ex1:solu}.  
Figure \ref{ex1:err} shows the curves of ${\rm log} \|\boldsymbol{E}-\boldsymbol{E}_h^N\|$
versus  ${\rm log} N_k$ for both the a priori and the a posteriori error estimates, where $N_k$ is the total
number of degrees of freedom (DoFs) of the mesh. It indicates that the meshes and the associated numerical
complexity are quasi-optimal, i.e., ${\rm log} \|\boldsymbol{E}-\boldsymbol{E}_h^N\|=O(N_k^{-1/3})$ holds asymptotically.

\begin{figure}
	\centering
	\includegraphics[width=0.4\textwidth]{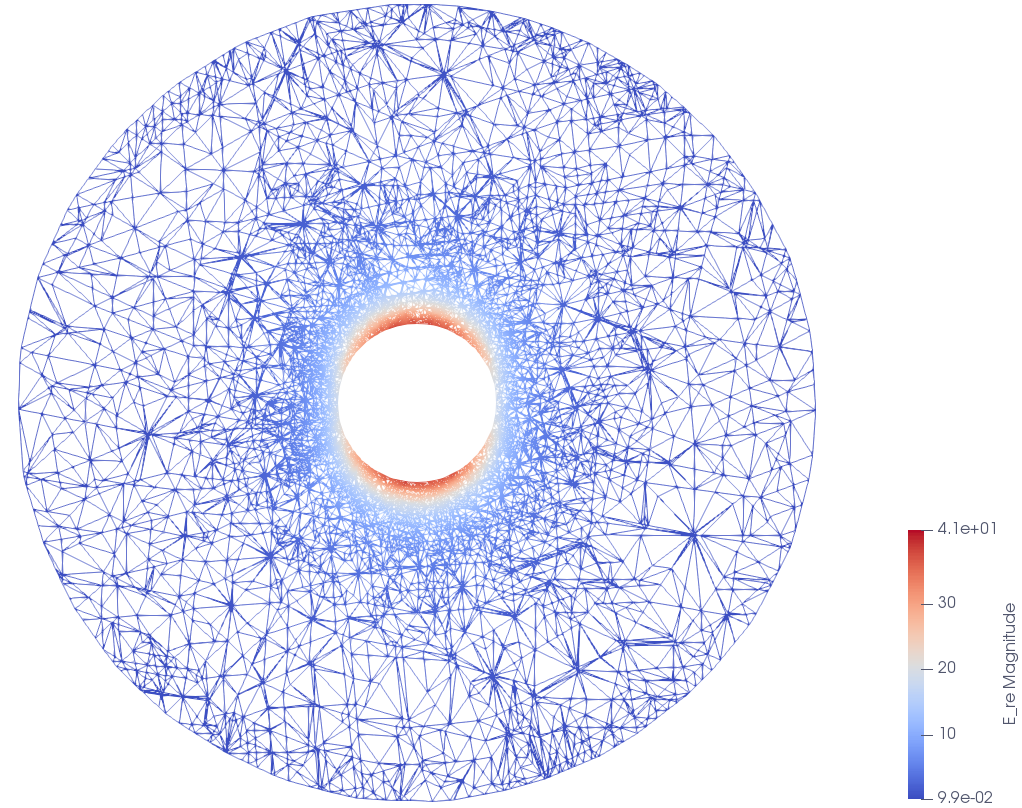}
	\includegraphics[width=0.4\textwidth]{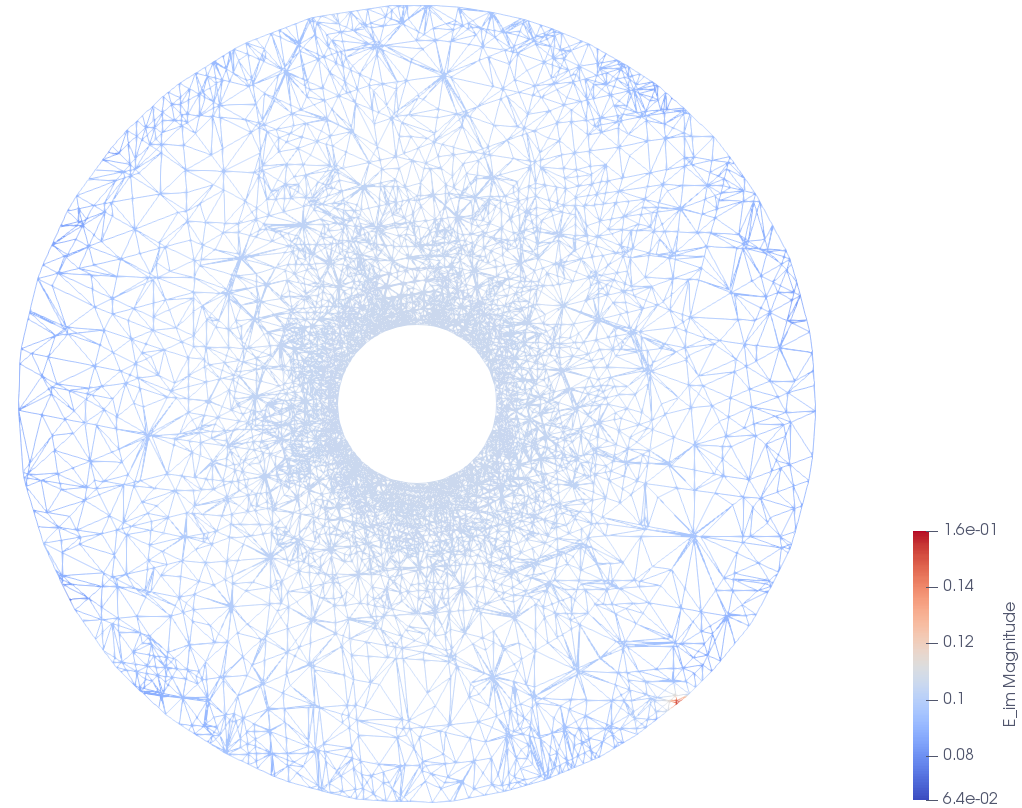}
	\caption{Example~\ref{ex1}: The amplitude of the real part and the imaginary part of the solution $\boldsymbol{E}^N_h$ on the plane $\{\boldsymbol{x}\in\mathbb{R}^3: \,x_1=0\}$.}\label{ex1:solu}
\end{figure}

\begin{figure}
	\centering
	\includegraphics[width=0.4\textwidth]{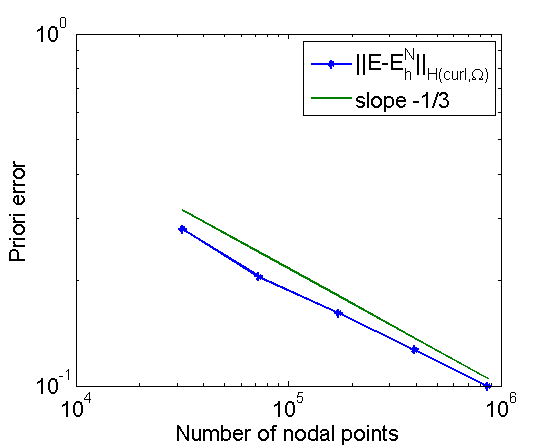}
	\includegraphics[width=0.4\textwidth]{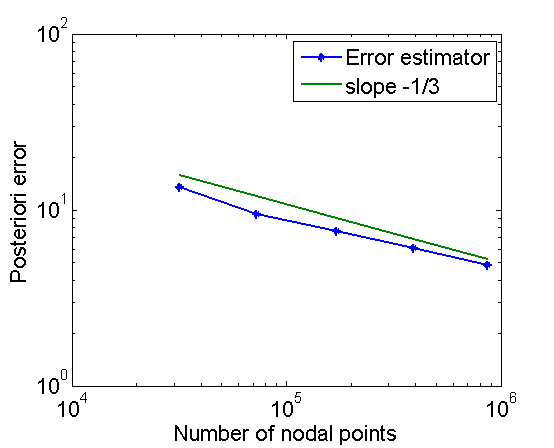}
	\caption{Example~\ref{ex1}: Quasi-optimality of the a priori error (left) and the a posteriori error (right) 
		estimates.} 
	\label{ex1:err}
\end{figure}

\begin{example}\label{ex2}
	This example concerns the scattering of the incident plane wave
\begin{equation*}
	\boldsymbol{E}^{\rm inc}=\boldsymbol{p} e^{{\rm i} k \boldsymbol{q}\cdot\boldsymbol{x}} = e^{-{\rm i} k z}(1,0,0)^\top.
\end{equation*}
	Let the obstacle $D$ be a U-shaped domain, as shown in Figure \ref{ex2:solu}. The Dirichlet boundary condition on $\Gamma_D$ is set by $\boldsymbol{E}=-\boldsymbol{E}^{\rm inc}$.
\end{example}

\begin{figure}
	\centering
	\includegraphics[width=0.4\textwidth]{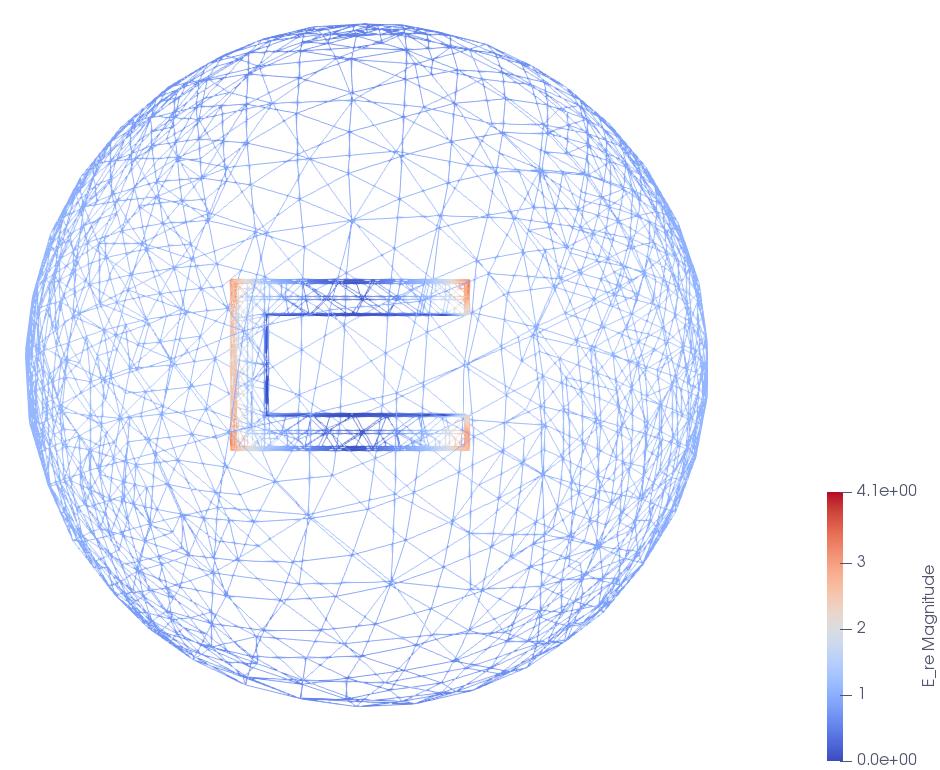}
	\includegraphics[width=0.4\textwidth]{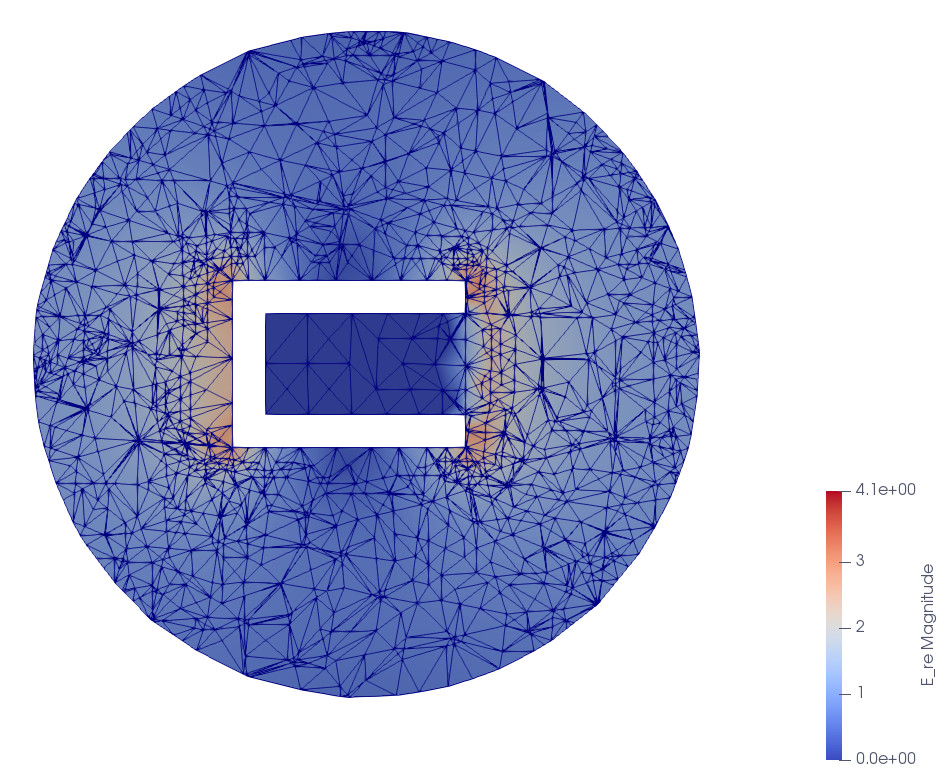}
	\caption{Example~\ref{ex2}: 
		The mesh of the computational domain (left). The amplitude of the real part of the solution $\boldsymbol{E}^N_h$ on the plane $\{\boldsymbol{x}\in\mathbb{R}^3: \,x_2=0\}$ (right).}\label{ex2:solu}
\end{figure}

\begin{figure}
	\centering
	\includegraphics[width=0.4\textwidth]{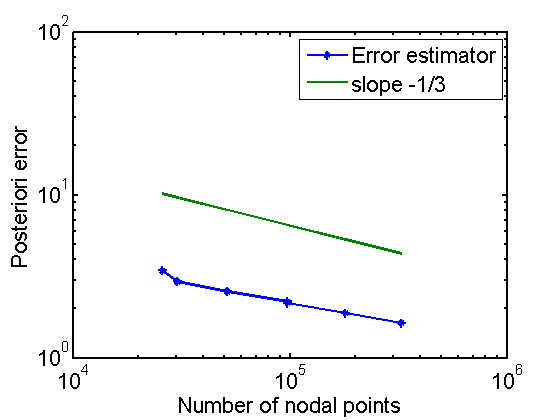}
	\caption{Example~\ref{ex2}: Quasi-optimality of  the a posteriori error  
		estimates.}
	\label{ex2:err}
\end{figure}

The surface plots of the amplitude of the field $\boldsymbol{E}_h^N$ are shown in Figure \ref{ex2:solu}.  
Figure \ref{ex2:err} shows the curves of ${\rm log} \|\boldsymbol{E}-\boldsymbol{E}_h^N\|$
versus  ${\rm log}  N_k$ for the a posteriori error estimate, where $N_k$ is the total
number of DoFs of the mesh. It is clear to note that the meshes and the associated numerical
complexity are quasi-optimal, i.e., ${\rm log} \|\boldsymbol{E}-\boldsymbol{E}_h^N\|=O(N_k^{-1/3})$ is valid asymptotically.

\section{Conclusion}\label{section: conclusion}

In this paper, we have presented an adaptive finite element DtN method for the electromagnetic scattering problem by bounded obstacles in three dimensions. The a posteriori error estimate for the finite element DtN solution is deduced. The posteriori error estimate takes
into account the finite element discretization error and DtN operator truncation error. The latter is shown to decay exponentially with respect to the truncation number. Based on the a posteriori error estimate, an adaptive finite element method is developed. Numerical results show that the proposed method is effective to solve the electromagnetic scattering problem. 

\appendix

\section{Spherical harmonic functions}\label{Appendix:SphericalF}

The spherical coordinates $(\rho,\theta,\varphi)$ are related to the Cartesian
coordinates $\boldsymbol{x}=(x_1,x_2,x_3)$ by $x_1=\rho\sin\theta\cos\varphi$,
$x_2=\rho\sin\theta\sin\varphi$, $x_3=\rho\cos\theta$, where $\theta\in [0,
\pi], \varphi\in [0, 2\pi]$ are the Euler angles of $\boldsymbol{x}$ and $\rho=|\boldsymbol
x|$. The local orthonormal basis $\{\boldsymbol{e}_{\rho}, \boldsymbol
e_{\theta}, \boldsymbol e_{\varphi}\}$ is given by 
\[
\begin{cases}
\boldsymbol{e}_{\rho}=(\sin\theta\cos\varphi,\sin\theta\sin\varphi,\cos\theta),\\
\boldsymbol
e_{\theta}=(\cos\theta\cos\varphi,\cos\theta\sin\varphi,-\sin\theta) ,\\
\boldsymbol e_{\varphi}=(-\sin\varphi,\cos\varphi,0). 
\end{cases}
\]

Denote by $\Gamma=\{\boldsymbol x\in\mathbb R^3: |\boldsymbol x|=1\}$ and
$\Gamma_R=\{\boldsymbol x\in\mathbb R^3: |\boldsymbol x|=R\}$ the unit
sphere and the sphere with radius $R$, respectively. Let $\{Y_n^m(\theta,
\varphi): |m|\leq n, n=0, 1, 2, \dots\}$ be the orthonormal sequence of
spherical harmonics of order $n$ on the unit sphere $\Gamma$. Explicitly, we
have
\begin{equation*}
Y_{n}^{m}(\theta,\varphi)
=\sqrt{\frac{(2n+1)(n-|m|)!}{4\pi(n+|m|)!}}P_n^{|m|}(\cos\theta)
e^{{\rm i}m\varphi}, 
\end{equation*}
where $P_n^m(t), 0\leq m\leq n, -1\leq t\leq 1$ are the associated Legendre
functions and are defined by 
\[
P_n^m(t)=(1-t^2)^{\frac{m}{2}}\frac{{\rm d}^m}{{\rm d}t^m}P_n(t). 
\]
Here $P_n$ is the Legendre polynomial of degree $n$. Define a sequence of
rescaled harmonics of order $n$:
\begin{equation*}
X_n^m(\theta,\varphi)=\frac{1}{R}Y_n^m(\theta,\varphi).
\end{equation*}
It can be easily verified that $\{X_n^m(\theta,\varphi): |m|\leq n, n=0, 1, 2,
\dots\}$ form a complete orthonormal system in $L^2(\Gamma_R)$, which is the functional
space of complex square integrable functions on the sphere $\Gamma_R$. 

\section{Surface differential operators and basis functions}\label{Appendix:SurOpe}

For a smooth scalar function $\phi$ defined on $\Gamma_R$, let
\begin{equation*}
\nabla_{\Gamma} \phi=\frac{\partial \phi}{\partial\theta}\boldsymbol{e}_{\theta}+\frac{1}{\sin\theta}\frac{\partial \phi}{\partial\varphi}\boldsymbol{e}_{\varphi}
\end{equation*}
be the surface gradient on $\Gamma_R$. The surface vector curl is defined by
\begin{equation*}
\textbf{curl}_{\Gamma}\phi=\nabla_{\Gamma} \phi\times \boldsymbol{e}_{\rho}.
\end{equation*}
For a smooth tangent vector function $\boldsymbol{\phi}$ to $\Gamma_R$,
it can be represented by its coordinates in the local orthonormal basis:
\[
\bold{\phi}=\phi_{\theta}\boldsymbol{e}_{\theta}+\phi_{\varphi}\boldsymbol{e}_{\varphi},
\]
where
\[
\phi_{\theta}=\boldsymbol{\phi}\cdot\boldsymbol{e}_{\theta},\quad \phi_{\varphi}=\boldsymbol{\phi}\cdot\boldsymbol{e}_{\varphi}.
\]
The surface divergence and the surface scalar curl can be defined as
\begin{eqnarray*}
	{\rm div}_{\Gamma}\boldsymbol{\phi}=\frac{1}{\sin\theta}
		\left[\frac{\partial}{\partial\theta}(\phi_{\theta}\sin\theta)
			+\frac{\partial \phi_{\varphi}}{\partial\varphi}\right], \qquad
	{\rm curl}_{\Gamma}\boldsymbol{\phi}=\frac{1}{\sin\theta}
		\left[\frac{\partial}{\partial\theta}(\phi_{\varphi}\sin\theta)
			-\frac{\partial \phi_{\theta}}{\partial\varphi}\right].
\end{eqnarray*}

Following \cite[Theorem 6.23]{CK98}, we introduce an orthonormal basis for
$TL(\Gamma_R)$: 
\begin{equation}\label{Unm}
\boldsymbol{U}_n^m(\theta,\varphi)=
\frac{1}{\sqrt{n(n+1)}}\bold{\nabla}_{\Gamma}X_n^m(\theta,\varphi)
\end{equation}
and
\begin{equation}\label{Vnm}
\boldsymbol{V}_n^m(\theta,\varphi)=\boldsymbol{e}_{\rho}\times\boldsymbol{U}_n^m=
-\frac{1}{\sqrt{n(n+1)}}\boldsymbol{\rm curl}_{\Gamma}X_n^m(\theta,\varphi)
\end{equation}
for $|m|\leq n, n=0,1,2,\dots$. 

\section{Identities of differential operators}\label{Appendix:VectorO}

Let $f$ be a smooth function. It can be verified that the curl operator satisfies
\begin{equation}\label{curlUVX}
\left\{
\begin{aligned}
& \nabla\times\left(f(\rho) \boldsymbol{U}_n^m\right)=\frac{1}{\rho}\frac{\partial}{\partial\rho}\left(\rho f(\rho)\right) \boldsymbol{V}_n^m, \\
& \nabla\times\left(f(\rho) \boldsymbol{V}_n^m\right)=-\frac{1}{\rho}\frac{\partial}{\partial\rho}\left(\rho f(\rho)\right)\boldsymbol{U}_n^m-\frac{\sqrt{n(n+1)}}{\rho}f(\rho) X_n^m \boldsymbol{e}_{\rho}, \\
& \nabla\times\left(f(\rho) X_n^m \boldsymbol{e}_{\rho}\right)=-\frac{\sqrt{n(n+1)}}{\rho}f(\rho) \boldsymbol{V}_n^m.
\end{aligned}
\right.
\end{equation}
Moreover, we may show from \eqref{curlUVX} that
\begin{equation}\label{3Dsurface_curlfe}
\left\{
\begin{aligned}
	& \left(\nabla\times\left(f(\rho) \boldsymbol{U}_n^m\right)\right)\times \boldsymbol{e}_{\rho}
		=\frac{1}{\rho}\frac{\partial}{\partial\rho}\left(\rho f(\rho)\right) \boldsymbol{U}_n^m,\\
	& \left(\nabla\times\left(f(\rho) \boldsymbol{V}_n^m\right)\right)\times \boldsymbol{e}_{\rho}
		=\frac{1}{\rho}\frac{\partial}{\partial\rho}\left(\rho f(\rho)\right)\boldsymbol{V}_n^m,\\
	& \left(\nabla\times\left(f(\rho) X_n^m \boldsymbol{e}_{\rho}\right)\right)\times \boldsymbol{e}_{\rho}
		=-\frac{\sqrt{n(n+1)}}{\rho}f(\rho) \boldsymbol{U}_n^m. 
\end{aligned}	
\right.	
\end{equation}
Taking the curl on both sides of \eqref{curlUVX}, we have 
\begin{equation}\label{curlcurlUVX}
\left\{
\begin{aligned}
	& \nabla\times\left(\nabla\times\left(f(\rho) \boldsymbol{U}_n^m\right)\right)=
		-\frac{1}{\rho}\frac{\partial^2}{\partial \rho^2}\left(\rho f(\rho)\right)\boldsymbol{U}_n^m
		-\frac{\sqrt{n(n+1)}}{\rho^2}\frac{\partial}{\partial\rho}\left(\rho f(\rho)\right)X_n^m \boldsymbol{e}_{\rho},\\
	& \nabla\times\left(\nabla\times\left(f(\rho) \boldsymbol{V}_n^m\right)\right)=
		\left[-\frac{1}{\rho}\frac{\partial^2}{\partial \rho^2}\left(\rho f(\rho)\right)
		+\frac{n(n+1)}{\rho^2} f(\rho)\right] \boldsymbol{V}_n^m, \\
	& \nabla\times\left(\nabla\times\left(f(\rho) X_n^m \boldsymbol{e}_{\rho}\right)\right)=
		\frac{\sqrt{n(n+1)}}{\rho}\frac{\partial}{\partial\rho} f(\rho) \boldsymbol{U}_n^m 
		+\frac{n(n+1)}{\rho^2} f(\rho) X_n^m \boldsymbol{e}_{\rho}.
\end{aligned}
\right.
\end{equation}
The divergence operator satisfies 
\begin{equation}\label{divUVX}
\left\{
\begin{aligned}
	& \nabla\cdot\left(f(\rho)\boldsymbol{U}_n^{m}\right)=-f(\rho)\sqrt{(n+1)n}\,\frac{1}{\rho}X_{n}^{m},\\
	&\nabla\cdot\left(f(\rho)\boldsymbol{V}_n^m\right)=0, \\
	&\nabla\cdot\left(f(\rho)X_n^{m} \boldsymbol{e}_{\rho}\right)
		=\frac{1}{\rho^2}\frac{\partial}{\partial \rho}\left(\rho^2 f(\rho)\right)X_n^m.
\end{aligned}
\right.
\end{equation}

The following result can be easily obtained from \eqref{divUVX}. 

\begin{lemma}\label{3Dsurface_divfree}
Given any smooth vector function 
\[
\boldsymbol{v}_n^{m}=v_{1n}^m(\rho) \boldsymbol{U}_n^m+v_{2n}^m(\rho) \boldsymbol{V}_n^m+v_{3n}^m(\rho) X_n^m \boldsymbol{e}_{\rho},
\]
if $\nabla\cdot\boldsymbol{v}_n^m=0$, then its coefficients satisfy the following equation
\[
\frac{\partial}{\partial\rho}\left(\rho^2 v_{3n}^m(\rho)\right)=\sqrt{n(n+1)}\rho v_{1n}^m(\rho).
\]
\end{lemma}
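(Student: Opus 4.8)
The plan is to obtain the identity by a direct, component-wise application of the divergence formulas collected in \eqref{divUVX}, followed by an appeal to the linear independence (equivalently, the $L^2(\Gamma_R)$-orthonormality) of the basis fields $\boldsymbol{U}_n^m$, $\boldsymbol{V}_n^m$, and $X_n^m\boldsymbol{e}_\rho$.

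First I would expand $\nabla\cdot\boldsymbol{v}_n^m$ using linearity of the divergence operator, applying the first line of \eqref{divUVX} with $f=v_{1n}^m$, the second line with $f=v_{2n}^m$, and the third line with $f=v_{3n}^m$. Since the middle term contributes nothing, this gives
\[
\nabla\cdot\boldsymbol{v}_n^m
=\left[\frac{1}{\rho^2}\frac{\partial}{\partial\rho}\bigl(\rho^2 v_{3n}^m(\rho)\bigr)
-\frac{\sqrt{n(n+1)}}{\rho}\,v_{1n}^m(\rho)\right]X_n^m .
\]
The hypothesis $\nabla\cdot\boldsymbol{v}_n^m=0$ then forces the bracketed scalar factor to vanish, because $X_n^m=\frac{1}{R}Y_n^m$ is a nonzero function on $\Gamma_R$; one may equivalently pair both sides against $X_n^m$ and use orthonormality of the rescaled spherical harmonics in $L^2(\Gamma_R)$. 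Multiplying the resulting relation by $\rho^2$ yields precisely
\[
\frac{\partial}{\partial\rho}\bigl(\rho^2 v_{3n}^m(\rho)\bigr)=\sqrt{n(n+1)}\,\rho\,v_{1n}^m(\rho),
\]
which is the claimed equation.

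There is essentially no obstacle in this argument: it is purely algebraic once \eqref{divUVX} has been established, and the $\boldsymbol{V}_n^m$ component drops out for free because the associated basis field is divergence free with a radial scalar coefficient. The only point deserving a word of care is the passage from $[\,\cdot\,]X_n^m\equiv 0$ to $[\,\cdot\,]\equiv 0$, which is immediate from $X_n^m\not\equiv 0$ and the smoothness of the coefficients. If one wished to avoid even this, one could instead derive \eqref{divUVX} and the lemma simultaneously by writing out $\nabla\cdot$ in spherical coordinates using the orthonormal frame $\{\boldsymbol{e}_\rho,\boldsymbol{e}_\theta,\boldsymbol{e}_\varphi\}$ together with the definitions \eqref{Unm}--\eqref{Vnm}, but this only reproduces the same computation with more bookkeeping.
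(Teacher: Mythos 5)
Your proof is correct and follows exactly the route the paper intends: the paper states only that the lemma "can be easily obtained from \eqref{divUVX}," and your component-wise expansion plus the observation that $X_n^m\not\equiv 0$ supplies precisely the omitted details.
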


\end{document}